\documentclass{amsart}

\usepackage{amssymb}
\usepackage{amsfonts}
\usepackage{amsmath}
\usepackage{amsthm}
\usepackage{graphicx}
\usepackage{mathrsfs}
\usepackage{dsfont}
\usepackage{amscd}
\usepackage{multirow}
\usepackage{palatino}
\usepackage{mathpazo}
\usepackage[all]{xy}
\linespread{1.05}        % Palatino needs more leading (space between lines)
\usepackage[scaled]{helvet} % ss
\usepackage{courier} % tt
\normalfont
\usepackage[T1]{fontenc}
\usepackage{calligra}
\usepackage{verbatim}
\usepackage[usenames,dvipsnames]{color}
\usepackage[colorlinks=true,linkcolor=Blue,citecolor=Violet]{hyperref}

\makeatletter
\@namedef{subjclassname@2010}{%
  \textup{2010} Mathematics Subject Classification}
\makeatother

%%% Theorem environment
\theoremstyle{plain}
\newtheorem{theorem}{Theorem}[section]

\newtheorem{corollary}[theorem]{Corollary}

\newtheorem{lemma}[theorem]{Lemma}
\newtheorem{proposition}[theorem]{Proposition}

\newtheorem{theorem-definition}[theorem]{Theorem-Definition}

\theoremstyle{definition}
\newtheorem{definition}[theorem]{Definition}

\newtheorem{notation}[theorem]{Notation}

\theoremstyle{remark}

\newtheorem{remark}[theorem]{Remark}

%\newtheorem{comment}{Comment}

%%% Equations numbered by section

\numberwithin{equation}{section}

%%% Authors' macros
\newcommand{\ee}{{\varepsilon}}
\newcommand{\dd}{{\mathsf{d}}}
\newcommand{\N}{{\mathds{N}}}

\newcommand{\R}{{\mathds{R}}}
\newcommand{\C}{{\mathds{C}}}

\newcommand{\D}{{\mathfrak{D}}}
\newcommand{\A}{{\mathfrak{A}}}
\newcommand{\B}{{\mathfrak{B}}}

\newcommand{\Lip}{{\mathsf{L}}}

\newcommand{\dist}{{\mathsf{dist}}}

\newcommand{\qpropinquity}[1]{{\mathsf{\Lambda}_{#1}}}

 %1- metric/norm 2- center 3- radius 

\newcommand{\Kantorovich}[1]{{\mathsf{mk}_{#1}}}

\newcommand{\Haus}[1]{{\mathsf{Haus}_{#1}}}

\newcommand{\StateSpace}{{\mathscr{S}}}

\newcommand{\mongekant}{{Mon\-ge-Kan\-to\-ro\-vich metric}}

\newcommand{\unit}{1}

\newcommand{\sa}[1]{{\mathfrak{sa}\left({#1}\right)}}

\newcommand{\dom}[1]{{\operatorname*{dom}\left({#1}\right)}}
\newcommand{\diam}[2]{{\mathrm{diam}\left({#1},{#2}\right)}}

\newcommand{\bridgereach}[2]{{\varrho\left({#1}\middle|{#2}\right)}}
\newcommand{\bridgeheight}[2]{{\varsigma\left({#1}\middle|{#2}\right)}}

\newcommand{\bridgelength}[2]{{\lambda\left({#1}\middle|{#2}\right)}}

\newcommand{\bridgenorm}[2]{{\mathsf{bn}_{ {#1}  }\left({#2}\right)}}

 %{{{#1}\circledcirc{#2}}}
 %{{\left[{#1},{#2}\right]_\ast}}

\newcommand{\alg}[1]{{\mathfrak{#1}}}

\renewcommand{\geq}{\geqslant}
\renewcommand{\leq}{\leqslant}

 %{{\widehat{\star_{#1}}}}

\newcommand{\Latremoliere}{Latr\'{e}moli\`{e}re}

\hyphenation{Gro-mov}
\hyphenation{Haus-dorff}

\allowdisplaybreaks[1]

\makeatletter
\newcommand{\vast}{\bBigg@{4}}
\newcommand{\Vast}{\bBigg@{5}}
\makeatother

\begin{document}

\title[Quantum metrics on tensor products of  commutative by AF algebras]{Quantum metrics on the tensor product of a commutative C*-algebra and an AF C*-algebra}
\author{Konrad Aguilar}
\address{School of Mathematical and Statistical Sciences \\ Arizona State University \\  901 S. Palm Walk, Tempe, AZ 85287-1804}
\email{konrad.aguilar@asu.edu}
\urladdr{https://math.la.asu.edu/~kaguilar/}

\date{\today}
%%%%%%%%%%%%%%%%%%%%%%%%%%%%%%%%%%%%%%
\subjclass[2010]{Primary:  46L89, 46L30, 58B34.}
\keywords{Noncommutative metric geometry, Monge-Kantorovich distance, Quantum Metric Spaces, Lip-norms, inductive limits, AF algebras, AH algebras, tensor products, Gromov-Hausdorff propinquity}
\thanks{The author  was partially supported by  the grant H2020-MSCA-RISE-2015-691246-QUANTUM DYNAMICS and the Polish  Ministry of Science and Higher Education  grant  \#3542/H2020/2016/2.}

\begin{abstract}
Given a compact metric space $X$ and a unital AF algebra $A$ equipped with a faithful tracial state, we place quantum metrics   on the tensor product of $C(X)$ and $A$ given established quantum metrics on $C(X)$ and $A$  from work with Bice and \Latremoliere. We prove the inductive limit of $C(X)$ tensor $A$ given by     $A$ is a metric limit in the Gromov-Hausdorff propinquity. We  show that our quantum metric is compatible with the tensor product by producing a Leibniz rule on elementary tensors and showing the diameter of our quantum metric on the tensor product is bounded above the diameter of the Cartesian product of  the quantum metric spaces. We provide  continuous families of $C(X)$ tensor $A$ which extends our previous results with \Latremoliere{} on UHF algebras.  
\end{abstract}
\maketitle

\setcounter{tocdepth}{1}
\tableofcontents
\section{Introduction}

Compact quantum metric spaces introduced by Rieffel \cite{Rieffel98a, Rieffel05} and motivated by work of Connes \cite{Connes, Connes89} were developed to study the metric aspect of Noncommutative Geometry. In particular,   Rieffel developed the first noncommutative analogue of the Gromov-Hausdorff distance \cite{Rieffel00}. This allows one to establish continuous families of C*-algebras built from natural parameter spaces including the noncommutative tori \cite{Rieffel00, Latremoliere13c}, AF algebras \cite{Aguilar-Latremoliere15, Aguilar16}, certain C*-dynamical systems \cite{Kaad19}, etc. Many more quantum Gromov-Hausdorff distances followed \cite{Kerr02, Kerr09, Latremoliere13, Latremoliere13b} to only name a few.  However, in this article, we focus on the Gromov-Hausdorff propinquity of \Latremoliere{} \cite{Latremoliere13, Latremoliere13b}. The reason for this is that his quantum distances are built only with the category of C*-algebras in mind, which has also allowed him to introduce quantum distances for classes of Hilbert C*-modules \cite{Latremoliere16, Latremoliere18a} and for Spectral Triples \cite{Latremoliere18c}. In a similar manner, Rieffel was able to capitalize on these particular properties of \Latremoliere's propinquity to obtain his recent results in \cite{Rieffel15, Rieffel18}.

Our work thus far has been mostly  focused on using the tools of Noncommutative Metric Geometry to translate the categorical notion of a limit of spaces, an inductive limit, to a metric limit using the Gromov-Hausdorff propinquity, which   we have done for the class of all unital  AF algebras \cite{Aguilar-Latremoliere15, Aguilar16, Aguilar18}. However, in this article, our focus is on placing quantum metrics on certain tensor products of C*-algebras that recovers some of the structure of given quantum metrics on each C*-algebra that forms the tensor products. This is motivated simply by the fact that forming tensor products is often a useful tool for studying spaces in any given category, and thus, we hope our work begins to enrich the study compact quantum metric spaces in this manner as well. For instance, our work in this article provides Lip-norms that satisfy a Leibniz rule on elementary tensors using the original Lip-norms on each quantum metric space (Expression \eqref{eq:leibniz-tensor}). Furthermore, we show that we are able to bound the diameter of our quantum metric on the tensor product by the diameter of the Cartesian product of the quantum metrics, which reflects the classical structure of tensor products given by $C(X) \otimes C(Y) \cong C(X \times Y)$ (Expression \eqref{eq:tensor-diam} and Corollary \ref{c:comm-tensor-diam}).   The way we accomplish these results is that we use any inductive sequence that forms the AF algebra $\A$ as its inductive limit  to represent $C(X) \otimes \A$ as an inductive limit and extend our work with Bice in \cite{Aguilar-Bice17} on certain homogeneous C*-algebras and with \Latremoliere{} in \cite{Aguilar-Latremoliere15} on certain AF algebras. Hence, in this paper, we also get results that translate a categorical limit to a metric limit in propinquity and we show that this process is not affected by taking a tensor product with $C(X)$ if a suitable quantum metric is constructed as seen in Theorem \ref{t:ah-c*-cqms}. We note that all of the inductive limits in this article are    approximately homogeneous C*-algebras or AH algebras. Furthermore, in \cite[Section 4]{Aguilar-Latremoliere15}, it was shown that the class of UHF algebras form a continuous image of the Baire space using their multiplicity sequences  with respect to quantum propinquity via a $2$-Lipschitz map.  In this paper in Theorem \ref{t:uhf-tensor-cont}, we show that this result is unaffected by taking a tensor product with $C(X)$ including the $2$-Lipschitz property. Finally, in Theorem \ref{t:fd-approx}, we approximate distances between $C(X)\otimes \A$ and $C(Y)\otimes \A$ using structure from the Lip-norms on $C(X), C(Y)$ and $\A$. This, in turn, allows us to provide finite-dimensional approximations for $C(X)\otimes \A$ using finite-dimensional commutative C*-algebras. Hence, we believe our work provides a foundational and non-trivial example for quantum metrics on the tensor products of quantum metrics spaces that are compatible with the tensor product structure.

\section{Background}

This section provides some background for the results of this paper.  These results are taken from \cite{Rieffel00}, \cite{Latremoliere13}, and \cite{Aguilar-Latremoliere15},  which pertain to quantum metrics and quantum Gromov-Hausdorff distance, Gromov-Hausdorff propinquity, and quantum metrics on AF algebras, respectively.

We begin with results and definitions related to quantum metric spaces. The following definition of a compact quantum metric space is not the original definition given by order unit spaces, but we use the same terminology since it is standard in the literature on quantum metric spaces that the order unit spaces being considered are the self-adjoint elements of a unital C*-algebra. Also, we include in this definition the notion of quasi-Leibniz, which generalizes the notion of Leibniz to the noncommutative setting and was used by \Latremoliere \ to introduce his noncommutative analogue to the Gromov-Hausdorff distance called the quantum Gromov-Hausdorff propinquity \cite{Latremoliere13,Latremoliere15}.

\begin{notation}
Let $\A$ be a unital C*-algebra. The unit of $\A$ will be denoted by $\unit_\A$. The state space of $\A$ will be denoted by $\StateSpace(\A)$ while the self-adjoint part of $\A$ will be denoted by $\sa{\A}$.  The C*-norm of $\A$ will be denoted by $\|\cdot\|_\A$. 
\end{notation}

\begin{definition}[\cite{Rieffel98a,Rieffel99, Rieffel05,Latremoliere15}]\label{Monge-Kantorovich-def}
A {\em compact quantum  metric space} $(\A,\Lip)$ is an ordered pair where $\A$ is a unital C*-algebra    and $\Lip$ is a seminorm defined on a unital dense subspace $\dom{\Lip}$ of $\sa{\A}$   such that:
\begin{enumerate}
\item $\{ a \in \sa{\A} : \Lip(a) = 0 \} = \R\unit_\A$,
\item the \emph{\mongekant} defined, for all two states $\varphi, \psi \in \StateSpace(\A)$, by
\begin{equation*}
\Kantorovich{\Lip} (\varphi, \psi) = \sup\left\{ |\varphi(a) - \psi(a)| : a\in\dom{\Lip}, \Lip(a) \leq 1 \right\}
\end{equation*}
metrizes the weak* topology of $\StateSpace(\A)$, and
\item the seminorm $\Lip$ is lower semi-continuous on $\sa{\A}$ with respect to $\|\cdot\|_\A$.
\end{enumerate}
If $(\A,\Lip)$ is a  compact quantum metric space, then we call the seminorm $\Lip$ a {\em Lip-norm}.  

Furthermore, if there exists $C\geq 1$ such that 
\[
\Lip\left(\frac{ab+ba}{2}\right) \leq C (\Lip(a) \|b\|_\A+ \Lip(b)\|a\|_\A)
\]
and 
\[
\Lip\left(\frac{ab-ba}{2i}\right) \leq C (\Lip(a) \|b\|_\A+ \Lip(b)\|a\|_\A)
\]
for all $a,b \in \sa{\A}$, then we call $\Lip$, {\em $C$-quasi-Leibniz} and we call $(\A, \Lip)$ a { \em $C$-quasi-Leibniz compact quantum metric space.} 
\end{definition}

The following gathers most of the known characterizations for compact quantum metric spaces, which we will use in this article since it is often difficult to directly show that the Monge-Kantorovich metric metrizes the weak* topology. 

\begin{theorem}[\cite{Rieffel98a,Rieffel99, Ozawa05}]\label{Rieffel-thm}
Let $(\A,\Lip)$ be an ordered pair where $\A$ is unital C*-algebra and $\Lip$ is a lower semi-continuous seminorm defined on unital dense subspace of  $\sa{\A}$. The following are equivalent:
\begin{enumerate}
\item $(\A,\Lip)$ is a  compact quantum metric space; 
\item the metric $\Kantorovich{\Lip}$ is bounded and there exists $r \in \R, r >0$ such that the set:
\begin{equation*}
\{a \in \dom{\Lip} : \Lip(a) \leq 1 \text{ and } \Vert a \Vert_\A \leq r \}
\end{equation*}
is compact in $\A$ for $\Vert \cdot \Vert_\A $;
\item the set:
\begin{equation*} \{ a+\R1_\A \in \sa{\A}/\R1_\A : a \in \dom{\Lip}, \Lip(a) \leq 1 \}
\end{equation*} is compact in $\sa{\A}/\R1_\A$ for $\Vert \cdot \Vert_{\sa{\A}/\R1_\A}$; 
\item there exists a state $\mu \in \StateSpace (\A)$ such that the set: 
\begin{equation*}
\{ a\in \dom{\Lip} : \Lip (a) \leq 1 \text{ and } \mu(a) = 0 \}
\end{equation*}
is compact in $\A$ for $\Vert \cdot \Vert_\A $;
\item for all $\mu \in \StateSpace (\A)$ the set: 
\begin{equation*}
\{ a\in \dom{\Lip} : \Lip (a) \leq 1 \text{ and } \mu(a) = 0 \}
\end{equation*}
is compact in $\A$ for $\Vert \cdot \Vert_\A $.
\end{enumerate}
\end{theorem}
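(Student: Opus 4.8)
The plan is to obtain the equivalences among (2)--(5) by elementary functional-analytic manipulations and then to prove (1) $\Leftrightarrow$ (4), where the analytic content lies; the latter is essentially the argument of Rieffel and Ozawa in the cited works. Throughout, extend $\Lip$ to $\sa{\A}$ by setting $\Lip(a) = +\infty$ for $a \notin \dom{\Lip}$, so that lower semi-continuity is literal and $\{a \in \sa{\A} : \Lip(a) \leq 1\}$ is norm-closed.

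\emph{Equivalences among (2)--(5).} The device driving these is: for any state $\mu \in \StateSpace(\A)$ the affine map $a \mapsto a - \mu(a)\unit_\A$ carries $\{a : \Lip(a) \leq 1\}$ onto $\{a : \Lip(a) \leq 1,\ \mu(a) = 0\}$, and, paired with the quotient map $\sa{\A} \to \sa{\A}/\R\unit_\A$, gives mutually inverse homeomorphisms between $\{a : \Lip(a) \leq 1,\ \mu(a) = 0\}$ and $\{a + \R\unit_\A : \Lip(a) \leq 1\}$, with $\|a - \mu(a)\unit_\A\| \leq 2\,\|a + \R\unit_\A\|$ so that the C*-norm and the quotient norm are comparable on these sets. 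Since the set in (3) does not involve $\mu$, this immediately gives (3) $\Leftrightarrow$ (4) $\Leftrightarrow$ (5). For (3) $\Rightarrow$ (2): the set in (4) is then compact, hence norm-bounded, say by $K$, and $\{a : \Lip(a) \leq 1,\ \|a\| \leq K\}$ is a norm-closed subset of the compact set $\{a : \Lip(a) \leq 1,\ \mu(a) = 0\} + [-K,K]\,\unit_\A$, hence compact, while boundedness of $\Kantorovich{\Lip}$ follows since $|\varphi(a) - \psi(a)| \leq 2\,\|a - \mu(a)\unit_\A\| \leq 2K$ whenever $\Lip(a) \leq 1$. For (2) $\Rightarrow$ (3): rescaling $\Lip(a) \leq 1$ elements shows the set in (2) is compact for every radius once it is for one; and if $D$ is the finite $\Kantorovich{\Lip}$-diameter of $\StateSpace(\A)$ then $\max\mathrm{spec}(a) - \min\mathrm{spec}(a) \leq D$ whenever $\Lip(a) \leq 1$, so such an $a$ with $\mu(a) = 0$ has $\|a\| \leq D$; thus the set in (5) is a norm-closed subset of the compact set $\{a : \Lip(a) \leq 1,\ \|a\| \leq D\}$, hence compact, and (3) follows via the correspondence above.

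\emph{Equivalence (1) $\Leftrightarrow$ (4).} Work through the canonical isometric embedding $\Phi \colon \sa{\A} \hookrightarrow C(\StateSpace(\A))$, $\Phi(a)(\varphi) = \varphi(a)$, with $\StateSpace(\A)$ in the weak* topology; for $\Lip(a) \leq 1$ the function $\Phi(a)$ is $1$-Lipschitz for $\Kantorovich{\Lip}$, by the definition of the latter. For (1) $\Rightarrow$ (4): since $\Kantorovich{\Lip}$ metrizes the compact weak* topology, $(\StateSpace(\A), \Kantorovich{\Lip})$ is a compact metric space, so the family of $1$-Lipschitz functions on it vanishing at $\mu$ is bounded and equicontinuous, hence compact by Arzel\`{a}--Ascoli; $\Phi$ maps $\{a : \Lip(a) \leq 1,\ \mu(a) = 0\}$ into this family, and the image is closed in $C(\StateSpace(\A))$ because $\Phi$ is isometric, $\mu$ is norm-continuous, and $\Lip$ is lower semi-continuous, so the image, and therefore $\{a : \Lip(a) \leq 1,\ \mu(a) = 0\}$ itself, is compact. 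For (4) $\Rightarrow$ (1): boundedness of $\Kantorovich{\Lip}$ and the null-space condition $\{a : \Lip(a) = 0\} = \R\unit_\A$ follow from compactness of the set in (4) (for the latter, $t\,|\varphi(a) - \psi(a)| \leq \Kantorovich{\Lip}(\varphi,\psi)$ for every $t > 0$ when $\Lip(a) = 0$, forcing $\varphi(a) = \psi(a)$ for all states); $\Kantorovich{\Lip}$-convergence implies weak*-convergence via $|\varphi(a) - \psi(a)| \leq \Lip(a)\,\Kantorovich{\Lip}(\varphi,\psi)$ and density of $\dom{\Lip}$; and weak*-convergence implies $\Kantorovich{\Lip}$-convergence because $\Phi$ carries the compact set in (4) to a compact, hence equicontinuous, subset of $C(\StateSpace(\A))$, and equicontinuity at $\psi_0$ is exactly the assertion that every $\Kantorovich{\Lip}$-ball about $\psi_0$ contains a weak* neighborhood of $\psi_0$. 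As the weak* topology is compact and the $\Kantorovich{\Lip}$-topology is Hausdorff, the two coincide, which is (1).

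I expect the main obstacle to be (1) $\Rightarrow$ (4): one must run the Arzel\`{a}--Ascoli comparison with care when $\A$ is non-separable (so $\StateSpace(\A)$ is not itself metrizable and the closedness of $\Phi(\{a : \Lip(a) \leq 1,\ \mu(a) = 0\})$ in $C(\StateSpace(\A))$ must be argued with nets), and lower semi-continuity of $\Lip$ is indispensable there --- without it the set in (4) need only be totally bounded rather than compact.
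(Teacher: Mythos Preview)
The paper does not prove this theorem: it is quoted as a background result from \cite{Rieffel98a,Rieffel99,Ozawa05} and is stated without proof. So there is no ``paper's own proof'' against which to compare.

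That said, your sketch is correct and is essentially the standard argument due to Rieffel (with Ozawa's refinement). The cycle among (2)--(5) via the section $a\mapsto a-\mu(a)\unit_\A$ of the quotient map, together with the bound $\|a-\mu(a)\unit_\A\|\leq 2\|a+\R\unit_\A\|$, is exactly how these equivalences are usually obtained; and the (1)$\Leftrightarrow$(4) step through the Kadison embedding $\Phi:\sa{\A}\hookrightarrow C(\StateSpace(\A))$ and Arzel\`a--Ascoli is the intended analytic core. Two minor remarks. First, your ``rescaling'' in (2)$\Rightarrow$(3) is fine but deserves one more word: the map $a\mapsto (r/R)a$ is a homeomorphism carrying $\{a:\Lip(a)\leq 1,\ \|a\|\leq R\}$ onto a closed (by lower semi-continuity) subset of $\{a:\Lip(a)\leq 1,\ \|a\|\leq r\}$, which gives compactness for every radius. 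Second, your worry about non-separable $\A$ in (1)$\Rightarrow$(4) is largely moot: once $\Kantorovich{\Lip}$ metrizes the weak* topology, $(\StateSpace(\A),\Kantorovich{\Lip})$ is a compact \emph{metric} space, so $C(\StateSpace(\A))$ is separable and $\sa{\A}$, embedding isometrically into it, is separable as well; in particular sequential arguments suffice throughout that implication.
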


One of the main contributions of Noncommutative Metric Geometry are  generalizations of the Gromov-Hausdorff distance \cite{burago01} on the class of compact quantum metric spaces. There are several to choose from, but for our work in this article, we will use \Latremoliere's   quantum Gromov-Hausdorff propinquity of \cite{Latremoliere13}. The first noncommutative analogue to the Gromov-Hausdorff distance was introduce by Rieffel in \cite{Rieffel00}, and is known as the quantum Gromov-Hausdorff distance. The choice to use \Latremoliere's propinquity comes from the fact that it acknowledges the C*-algebraic structure on quasi-Leibniz compact quantum metric spaces, while also providing estimates using the notion of bridges which are a useful and powerful method for providing estimates. However, we note that \Latremoliere's propinquity dominates Rieffel's quantum distance \cite[Theorem 6.2]{Latremoliere13}  and thus all convergence results in this paper are valid for Rieffel's quantum distance as well. Furthermore, we note that \Latremoliere \ has another propinquity called the dual Gromov-Hausdorff propinquity, which is also complete on the certain classes of quasi-Leibniz spaces which comprise all spaces studied in this paper.  Also, the dual propinquity is dominated by the quantum propinquity \cite[Theorem 5.5]{Latremoliere13b}, and thus all convergence results in this paper are valid for dual propinquity as well.  Before we state the theorem for \Latremoliere's quantum   propinquity, we introduce some definitions from \cite{Latremoliere13}.

\begin{definition}[{\cite[Definition 3.1]{Latremoliere13}}]
The \emph{$1$-level set} $\StateSpace_1(\D|\omega)$ of an element $\omega$ of a unital C*-algebra $\D$ is
\begin{equation*}
\left\{ \varphi \in \StateSpace(\D) : \varphi((1-\omega^\ast\omega))=\varphi((1-\omega \omega^\ast)) = 0 \right\}\text{.}
\end{equation*}
\end{definition}
Next, we define the notion of a  \Latremoliere \ bridge, which is not only crucial in the definition of the quantum propinquity but also the convergence results of \Latremoliere \ in \cite{Latremoliere13c} and Rieffel in \cite{Rieffel15}. In particular, the pivot of Definition (\ref{bridge-def}) is of utmost importance in the convergence results of \cite{Latremoliere13c, Rieffel15}.
\begin{definition}[{\cite[Definition 3.6]{Latremoliere13}}]\label{bridge-def}
A \emph{bridge} from $\A$ to $\B$, where $\A$ and $\B$ are unital C*-algebras, is a quadruple $(\D,\pi_\A,\pi_\B,\omega)$ where
\begin{enumerate}
\item $\D$ is a unital C*-algebra,
\item the element $\omega \in \D$, called the \emph{pivot} of the bridge, satisfies   $\StateSpace_1(\D|\omega) \not=\emptyset$,
\item $\pi_\A : \A\hookrightarrow \D$ and $\pi_\B : \B\hookrightarrow\D$ are unital *-monomorphisms.
\end{enumerate}
\end{definition}

In the next few definitions, we denote by $\Haus{\mathrm{d}}$ the \emph{Hausdorff (pseudo)distance} induced by a (pseudo)distance $\mathrm{d}$ on the compact subsets of a (pseudo)metric space $(X,\mathrm{d})$ \cite{Hausdorff}.

\begin{definition}[{\cite[Definition 3.16]{Latremoliere13}}]\label{d:height}
Let $C \geq 1$. Let $(\A,\Lip_\A)$ and $(\B,\Lip_\B)$ be two $C$-quasi-Leibniz compact quantum  metric spaces. The \emph{height} $\bridgeheight{\gamma}{\Lip_\A,\Lip_\B}$ of a bridge \\ $\gamma = (\D,\pi_\A,\pi_\B,\omega)$ from $\A$ to $\B$, and with respect to $\Lip_\A$ and $\Lip_\B$, is given by
\begin{equation*}
\max\left\{ \Haus{\Kantorovich{\Lip_\A}}(\StateSpace(\A), \pi_\A^\ast(\StateSpace_1(\D|\omega))), \Haus{\Kantorovich{\Lip_\B}}(\StateSpace(\B), \pi_\B^\ast(\StateSpace_1(\D|\omega))) \right\}\text{,}
\end{equation*}
where $\pi_\A^{\ast}$ and $\pi_\B^\ast$ are the dual maps of $\pi_\A$ and $\pi_\B$, respectively.
\end{definition}

\begin{definition}[{\cite[Definition 3.10]{Latremoliere13}}]
Let $C \geq 1$. Let $(\A,\Lip_\A)$ and $(\B,\Lip_\B)$ be two $C$-quasi-Leibniz compact quantum  metric spaces. The \emph{bridge seminorm} $\bridgenorm{\gamma}{\cdot}$ of a bridge $\gamma = (\D,\pi_\A,\pi_\B,\omega)$ from $\A$ to $\B$ is the seminorm defined on $\A\oplus\B$ by
\begin{equation*}
\bridgenorm{\gamma}{a,b} = \|\pi_\A(a)\omega - \omega\pi_\B(b)\|_\D
\end{equation*}
for all $(a,b) \in \A\oplus\B$.
\end{definition}

We implicitly identify $\A$ with $\A\oplus\{0\}$ and $\B$ with $\{0\}\oplus\B$ in $\A\oplus\B$ in the next definition, for any two spaces $\A$ and $\B$.

\begin{definition}[{\cite[Definition 3.14]{Latremoliere13}}]\label{d:reach}
Let $C \geq 1$. Let $(\A,\Lip_\A)$ and $(\B,\Lip_\B)$ be two $C$-quasi-Leibniz compact quantum  metric spaces. The \emph{reach} $\bridgereach{\gamma}{\Lip_\A,\Lip_\B}$ of a bridge \\
 $\gamma = (\D,\pi_\A,\pi_\B,\omega)$ from $\A$ to $\B$, and with respect to $\Lip_\A$ and $\Lip_\B$, is given by
\begin{equation*}
\Haus{\bridgenorm{\gamma}{\cdot}}\left( \left\{a\in\sa{\A} : \Lip_\A(a)\leq 1\right\} , \left\{ b\in\sa{\B} : \Lip_\B(b) \leq 1 \right\}  \right) \text{.}
\end{equation*}
\end{definition}

The next quantity is  a natural way to combine the information given by the height and the reach of a bridge.

\begin{definition}[{\cite[Definition 3.17]{Latremoliere13}}]\label{d:length}
Let $C \geq 1$. Let $(\A,\Lip_\A)$ and $(\B,\Lip_\B)$ be two $C$-quasi-Leibniz compact quantum  metric spaces. The \emph{length} $\bridgelength{\gamma}{\Lip_\A,\Lip_\B}$ of a bridge $\gamma = (\D,\pi_\A,\pi_\B,\omega)$ from $\A$ to $\B$, and with respect to $\Lip_\A$ and $\Lip_\B$, is given by
\begin{equation*}
\max\left\{\bridgeheight{\gamma}{\Lip_\A,\Lip_\B}, \bridgereach{\gamma}{\Lip_\A,\Lip_\B}\right\}\text{.}
\end{equation*}
\end{definition}

Although we will not define \Latremoliere's quantum  Gromov-Hausdorff propinquity, the following result provides the main tool for which we will furnish upper bounds for the quantum Gromov-Hausdorff propinquity.  This will then produce our approximation and convergence results. 

\begin{theorem}[{\cite[Theorem 6.1]{Latremoliere13}, \cite{Latremoliere15}}]\label{t:distq}
Let $C\geq 1$. The quantum Gromov-Hausdorff propinquity $\qpropinquity{}$ is a   metric on the full quantum isometry classes of $C$-quasi-Leibniz compact quantum metric spaces, where full quantum   isometry is given by a *-isomorphism $\pi: \A \rightarrow \B$ such that $\Lip_\B=\Lip_\A \circ \pi$, and thus
\[\qpropinquity{}((\A, \Lip_\A), (\B, \Lip_\B))=0\] 
for two $C$-quasi-Leibniz compact quantum metric spaces $(\A, \Lip_\A), (\B, \Lip_\B)$ if and only if $(\A, \Lip_\A), (\B, \Lip_\B)$ are fully quantum isometric.

Furthermore, if $\gamma$ is a bridge from $(\A, \Lip_\A)$ to $ (\B, \Lip_\B)$, then 
\[
\qpropinquity{}((\A, \Lip_\A), (\B, \Lip_\B))\leq  \bridgelength{\gamma}{\Lip_\A,\Lip_\B}.
\]
\end{theorem}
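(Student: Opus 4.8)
The statement is \cite[Theorem~6.1]{Latremoliere13} together with the quasi-Leibniz refinement of \cite{Latremoliere15}, so the plan is to reproduce \Latremoliere's argument, which I outline here. I would first recall the definition of $\qpropinquity{}$: the quantity $\qpropinquity{}((\A,\Lip_\A),(\B,\Lip_\B))$ is the infimum of the \emph{lengths} of finite chains of bridges (\emph{trips}) from $(\A,\Lip_\A)$ to $(\B,\Lip_\B)$, where a trip is a tuple $(\gamma_1,\dots,\gamma_n)$ threading $C$-quasi-Leibniz compact quantum metric spaces $(\A_0,\Lip_0)=(\A,\Lip_\A),\dots,(\A_n,\Lip_n)=(\B,\Lip_\B)$ with $\gamma_j$ a bridge from $(\A_{j-1},\Lip_{j-1})$ to $(\A_j,\Lip_j)$, and the length of the trip is $\sum_{j=1}^{n}\bridgelength{\gamma_j}{\Lip_{j-1},\Lip_j}$. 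With this definition the final displayed inequality is immediate: a single bridge $\gamma$ from $(\A,\Lip_\A)$ to $(\B,\Lip_\B)$ is a one-step trip of length $\bridgelength{\gamma}{\Lip_\A,\Lip_\B}$, so the defining infimum is at most that value.

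Next I would dispatch the elementary metric axioms. For finiteness it suffices to exhibit one bridge of finite length: take $\D$ to be the unital free product $\A\ast_{\mathds{C}\unit}\B$ with its canonical unital inclusions and pivot $\omega=\unit_\D$, so $\StateSpace_1(\D|\omega)=\StateSpace(\D)$ and the height vanishes, while the reach is finite because the $\Lip_\A$- and $\Lip_\B$-unit balls are norm-bounded modulo constants (match the scalar parts and use the compactness in Theorem~\ref{Rieffel-thm}). Symmetry follows because reversing a trip and replacing each pivot $\omega$ by $\omega^\ast$ yields a trip of equal length in the opposite direction, the height and reach being symmetric under this operation. The triangle inequality follows by concatenating a trip from $(\A,\Lip_\A)$ to $(\B,\Lip_\B)$ with one from $(\B,\Lip_\B)$ to $(\alg{C},\Lip_{\alg{C}})$ into a trip from $(\A,\Lip_\A)$ to $(\alg{C},\Lip_{\alg{C}})$ of added length, then passing to infima. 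The easy half of the coincidence property --- that a full quantum isometry $\pi\colon\A\to\B$ gives $\qpropinquity{}=0$ --- is the displayed bound applied to the bridge $(\B,\pi,\mathrm{id}_\B,\unit_\B)$, whose height and reach are $0$.

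The hard part, and the main obstacle, is the converse half of the coincidence property: $\qpropinquity{}((\A,\Lip_\A),(\B,\Lip_\B))=0$ should force $(\A,\Lip_\A)$ and $(\B,\Lip_\B)$ to be fully quantum isometric, and here I would follow \Latremoliere{} closely. Choose trips $\Gamma_n$ of length tending to $0$. For each $a$ in the $\Lip_\A$-unit ball I would propagate a chain of approximate partners through the bridges of $\Gamma_n$ --- each step controlled on the algebraic side by the bridge seminorm (hence by the reach) and on the state side by the $1$-level sets and the height --- producing, for every $n$, an element of a $(1+o(1))$-dilation of the $\Lip_\B$-unit ball that is uniformly $\Kantorovich{\Lip_\B}$-close to the ``image'' of $a$, with the assignment linear, positive and unital up to an error vanishing as $n\to\infty$. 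Fixing a countable $\|\cdot\|$-dense subset of the $\Lip_\A$-unit ball and using the norm-compactness of $\Lip$-unit balls modulo scalars from Theorem~\ref{Rieffel-thm}(3), a diagonal extraction would yield a linear, positive, unital map $h\colon\dom{\Lip_\A}\to\dom{\Lip_\B}$ with $\Lip_\B\circ h\leq\Lip_\A$ whose dual map is an isometry $(\StateSpace(\B),\Kantorovich{\Lip_\B})\to(\StateSpace(\A),\Kantorovich{\Lip_\A})$; running the same construction from $\B$ to $\A$ and invoking uniqueness of limits would show $h$ is bijective with $\Lip_\B\circ h=\Lip_\A$. The delicate final step --- and the point where the quasi-Leibniz hypothesis is indispensable --- is that the two $C$-quasi-Leibniz inequalities of Definition~\ref{Monge-Kantorovich-def} pass to this limit, so $h$ is multiplicative for the Jordan product $\tfrac{ab+ba}{2}$ and compatible with $\tfrac{ab-ba}{2i}$, hence extends to a $\ast$-isomorphism $\pi\colon\A\to\B$ with $\Lip_\B\circ\pi=\Lip_\A$. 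I expect essentially all of the difficulty to reside in this extraction-and-rigidity argument --- tracking approximate morphisms along chains of bridges, passing to the limit via compactness, and upgrading an order isomorphism to a $\ast$-isomorphism via quasi-Leibniz; the bridge bound and the remaining axioms are formal.
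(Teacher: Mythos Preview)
The paper does not prove this theorem: it is stated as a background result with no \texttt{proof} environment, and is simply cited from \cite[Theorem~6.1]{Latremoliere13} and \cite{Latremoliere15}. There is therefore nothing in the paper to compare your proposal against.

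That said, your outline is a faithful sketch of \Latremoliere's original argument in the cited references: the infimum-over-trips definition makes the bridge bound and the triangle inequality formal; symmetry comes from reversing trips; the easy direction of coincidence is the identity bridge; and the substantive content is exactly the extraction-and-rigidity step you describe, where one propagates elements along trips of vanishing length, uses the compactness of Lip-balls modulo scalars to extract a limiting order-unit isomorphism, and then invokes the $C$-quasi-Leibniz inequalities to show the limit respects the Jordan and Lie products and hence extends to a $\ast$-isomorphism. One small caution: in \Latremoliere's actual proof the multiplicativity step is organized through \emph{target sets} and the fact that the quasi-Leibniz inequality forces products of target-set elements to land in controlled target sets, so the Jordan/Lie compatibility is obtained at the level of these set-valued correspondences before passing to the limit, rather than by showing the quasi-Leibniz inequalities ``pass to the limit'' directly. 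Your high-level description is correct, but if you were to write this out in full you would want to phrase that step in terms of target sets to make the argument go through cleanly.
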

Next, the main goal of this article is to extend our results on the   homogeneous C*-algebras of the form $C(X, \A)$ for $X$ compact metric and $\A$ finite-dimensional in \cite{Aguilar-Bice17} to the case of $\A$ as a unital AF algebra equipped with a faithful tracial state.  These cover a part of a class of C*-algebras known as {\em approximately homogeneous} \cite[Definition V.2.1.9]{Blackadar06}.  Our approach to place quantum metrics on $C(X, \A)$ for an AF-algebra $\A$ is to use the quantum metrics on  homogeneous C*-algebras of the form $C(X, \B)$ with $\dim(\B)< \infty$ from \cite{Aguilar-Bice17} combined with the techniques for AF-algebras used in \cite{Aguilar-Latremoliere15}  along with techniques from \cite{Latremoliere05, Kerr09}, and we will make mention of this when these techniques appear in the rest of the article.  To accomplish this, we actually need to  create new quantum metrics on $C(X, \B)$ with $\dim(\B)< \infty$ that use much of the work in \cite{Aguilar-Bice17}.   These new quantum metrics will be introduced in the next section. 

Part of our goal is to show that our work in this article extends the AF-algebra case in \cite{Aguilar-Latremoliere15} by way of $C(\{x\}, \A)\cong \A$, and thus, we finish this background section, by stating this result.

\begin{theorem}[{\cite[Theorem 3.5]{Aguilar-Latremoliere15}}]\label{AF-lip-norms-thm-union} 
Let $\A$ be a unital AF algebra  with unit $1_\A$ endowed with a faithful tracial state $\mu$. Let $\mathcal{I} = (\A_n)_{n\in\N}$ be an increasing sequence of unital finite dimensional C*-subalgebras such that $\A=\overline{\cup_{n \in \N} \A_n}^{\Vert \cdot \Vert_\A}$ with $\A_0=\C 1_\A $.

Let $\pi$ be the GNS representation of $\A$ constructed from $\mu$ on the space $L^2(\A,\mu)$.

For all $n\in\N$, let:
\begin{equation*}
E_{\tau,n} : \A\rightarrow\A_n
\end{equation*}
be the unique $\mu$-preserving conditional expectation of $\A$ onto $\A_n$ induced by $L^2(\A,\mu)$.

Let $\beta: \N\rightarrow (0,\infty)$ have limit $0$ at infinity. If, for all $a\in\sa{\cup_{n \in \N} \A_n }$, we set
\begin{equation*}
\Lip_{\mathcal{I},\mu}^\beta(a) = \sup\left\{\frac{\left\|a - E_{\tau,n}(a)\right\|_\A}{\beta(n)} : n \in \N \right\},
\end{equation*}
then $\left(\A,\Lip_{\mathcal{I},\mu}^\beta\right)$ is a $2$-quasi-Leibniz compact quantum metric space. Moreover, for all $n\in\N$
\begin{equation*}
\qpropinquity{} \left(\left(\A_n,\Lip_{\mathcal{I},\mu}^\beta \right), \left(\A,\Lip_{\mathcal{I},\mu}^\beta \right)\right) \leq \beta(n)
\end{equation*}
and thus
\begin{equation*}
\lim_{n\rightarrow\infty} \qpropinquity{}\left(\left(\A_n,\Lip_{\mathcal{I},\mu}^\beta \right), \left(\A,\Lip_{\mathcal{I},\mu}^\beta\right)\right) = 0\text{.}
\end{equation*}
\end{theorem}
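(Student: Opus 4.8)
The plan is to verify directly the three conditions of Definition \ref{Monge-Kantorovich-def} together with the two $2$-quasi-Leibniz inequalities, using criterion (4) of Theorem \ref{Rieffel-thm} to handle condition (2), and then to produce the bound on $\qpropinquity{}$ by exhibiting a single bridge and invoking Theorem \ref{t:distq}. Throughout I will use the following properties of the conditional expectations $E_{\tau,n}$, all immediate from the fact that $E_{\tau,n}$ is the restriction to $\A$ of the orthogonal projection of $L^2(\A,\mu)$ onto $L^2(\A_n,\mu)$: each $E_{\tau,n}$ is unital, positive (hence $\ast$-preserving) and $\|\cdot\|_\A$-contractive; it restricts to the identity on $\A_n$; it satisfies $\mu\circ E_{\tau,n}=\mu$; and the tower property $E_{\tau,m}\circ E_{\tau,n}=E_{\tau,\min\{m,n\}}$ holds. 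Since $\A_0=\C\unit_\A$, this gives $E_{\tau,0}(a)=\mu(a)\unit_\A$ for every $a\in\sa{\A}$. Note also that each $(\A_n,\Lip_{\mathcal{I},\mu}^\beta)$ is trivially a compact quantum metric space, because $\sa{\A_n}$ is finite dimensional and the restriction of $\Lip_{\mathcal{I},\mu}^\beta$ to it vanishes only on $\R\unit_\A$.

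First I would take $\dom{\Lip_{\mathcal{I},\mu}^\beta}=\{a\in\sa{\A}:\Lip_{\mathcal{I},\mu}^\beta(a)<\infty\}$, which contains the dense subspace $\sa{\cup_n\A_n}$ (for $a\in\sa{\A_m}$ only the finitely many terms with $n<m$ contribute). Condition (1) is then immediate: $\Lip_{\mathcal{I},\mu}^\beta(a)=0$ forces $a=E_{\tau,0}(a)=\mu(a)\unit_\A\in\R\unit_\A$, while $E_{\tau,n}(\unit_\A)=\unit_\A$ gives the reverse inclusion. Condition (3) is automatic, since $\Lip_{\mathcal{I},\mu}^\beta$ is a supremum of the $\|\cdot\|_\A$-continuous maps $a\mapsto\|a-E_{\tau,n}(a)\|_\A/\beta(n)$. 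For condition (2) I would apply criterion (4) of Theorem \ref{Rieffel-thm} with the state $\mu$: if $\Lip_{\mathcal{I},\mu}^\beta(a)\leq 1$ and $\mu(a)=0$ then $\|a\|_\A=\|a-E_{\tau,0}(a)\|_\A\leq\beta(0)$, so this set is bounded; and given $\varepsilon>0$, pick $n$ with $\beta(n)<\varepsilon$, so every such $a$ is within $\varepsilon$ of $E_{\tau,n}(a)$, which lies in a norm-bounded, hence totally bounded, subset of the finite-dimensional space $\A_n$. Thus the set is totally bounded, and it is $\|\cdot\|_\A$-closed by lower semicontinuity of $\Lip_{\mathcal{I},\mu}^\beta$ and $\|\cdot\|_\A$-continuity of $\mu$; being closed and totally bounded in the complete space $\A$, it is compact.

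For the $2$-quasi-Leibniz inequalities, fix $a,b\in\dom{\Lip_{\mathcal{I},\mu}^\beta}$ (the inequalities being vacuous otherwise) and write $a_n=E_{\tau,n}(a)$, $b_n=E_{\tau,n}(b)\in\sa{\A_n}$. Since $\A_n$ is a subalgebra, $\tfrac{a_nb_n+b_na_n}{2}\in\sa{\A_n}$ is fixed by $E_{\tau,n}$; combining this with contractivity of $E_{\tau,n}$, the estimate $\|ab-a_nb_n\|_\A\leq\|a-a_n\|_\A\|b\|_\A+\|a\|_\A\|b-b_n\|_\A$, and $\|a_n\|_\A\leq\|a\|_\A$, one obtains
\begin{align*}
\left\|\tfrac{ab+ba}{2}-E_{\tau,n}\!\left(\tfrac{ab+ba}{2}\right)\right\|_\A
&\leq 2\left(\|a-a_n\|_\A\|b\|_\A+\|a\|_\A\|b-b_n\|_\A\right)\\
&\leq 2\beta(n)\left(\Lip_{\mathcal{I},\mu}^\beta(a)\|b\|_\A+\Lip_{\mathcal{I},\mu}^\beta(b)\|a\|_\A\right).
\end{align*}
Dividing by $\beta(n)$ and taking the supremum over $n$ yields the Jordan-product inequality with $C=2$; the argument for $\tfrac{ab-ba}{2i}$ is identical, and the same computation carried out inside $\A_n$ shows $(\A_n,\Lip_{\mathcal{I},\mu}^\beta)$ is $2$-quasi-Leibniz as well.

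For the propinquity estimate I would use the bridge $\gamma_n=(\A,\iota_n,\mathrm{id}_\A,\unit_\A)$ from $(\A_n,\Lip_{\mathcal{I},\mu}^\beta)$ to $(\A,\Lip_{\mathcal{I},\mu}^\beta)$, where $\iota_n\colon\A_n\hookrightarrow\A$ is the inclusion and the pivot $\unit_\A$ satisfies $\StateSpace_1(\A|\unit_\A)=\StateSpace(\A)\neq\emptyset$. The height is $0$ because $\iota_n^\ast(\StateSpace(\A))=\StateSpace(\A_n)$ (states restrict to states and extend by Hahn--Banach) and $\mathrm{id}_\A^\ast(\StateSpace(\A))=\StateSpace(\A)$. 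The bridge seminorm is $\bridgenorm{\gamma_n}{a,b}=\|a-b\|_\A$, so the reach is the $\|\cdot\|_\A$-Hausdorff distance between $L_n=\{a\in\sa{\A_n}:\Lip_{\mathcal{I},\mu}^\beta(a)\leq 1\}$ and $L=\{b\in\sa{\A}:\Lip_{\mathcal{I},\mu}^\beta(b)\leq 1\}$; the inclusion $L_n\subseteq L$ handles one side. The other side rests on the key point that $E_{\tau,n}$ does not increase the Lip-norm: for $m\geq n$ the $m$-th defining term of $\Lip_{\mathcal{I},\mu}^\beta(E_{\tau,n}(b))$ vanishes, while for $m<n$ the tower property gives $E_{\tau,n}(b)-E_{\tau,m}(E_{\tau,n}(b))=E_{\tau,n}\!\left(b-E_{\tau,m}(b)\right)$, of norm at most $\|b-E_{\tau,m}(b)\|_\A$ by contractivity, so $\Lip_{\mathcal{I},\mu}^\beta(E_{\tau,n}(b))\leq\Lip_{\mathcal{I},\mu}^\beta(b)$. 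Consequently, for $b\in L$ we have $E_{\tau,n}(b)\in L_n$ and $\|b-E_{\tau,n}(b)\|_\A\leq\Lip_{\mathcal{I},\mu}^\beta(b)\,\beta(n)\leq\beta(n)$, so the reach, hence $\bridgelength{\gamma_n}{\Lip_{\mathcal{I},\mu}^\beta,\Lip_{\mathcal{I},\mu}^\beta}$, is at most $\beta(n)$. Theorem \ref{t:distq} then gives $\qpropinquity{}\!\left((\A_n,\Lip_{\mathcal{I},\mu}^\beta),(\A,\Lip_{\mathcal{I},\mu}^\beta)\right)\leq\beta(n)$, and the limit assertion follows from $\beta(n)\to 0$. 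I expect the main obstacle to be not any single hard estimate but the careful bookkeeping with the compatible family $(E_{\tau,n})_n$: its tower property and contractivity are exactly what make both the total-boundedness step in Theorem \ref{Rieffel-thm} and the inequality $\Lip_{\mathcal{I},\mu}^\beta(E_{\tau,n}(b))\leq\Lip_{\mathcal{I},\mu}^\beta(b)$ work, the latter being what forces the reach --- and hence the propinquity --- down to $\beta(n)$.
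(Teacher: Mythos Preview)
Your proof is correct. The paper does not actually give its own proof of this statement---it is quoted as a background result from \cite{Aguilar-Latremoliere15}---but your argument is precisely the one the paper uses to prove its generalization, Theorem~\ref{t:ah-c*-cqms}, which specializes to the present statement when $X=\{x\}$: the same bridge $(\A,\iota_n,\mathrm{id}_\A,1_\A)$ with trivial height, the same use of criterion~(4) of Theorem~\ref{Rieffel-thm} via the $\mu$-preserving conditional expectations, and the same key inequality $\Lip_{\mathcal{I},\mu}^\beta(E_{\tau,n}(b))\leq\Lip_{\mathcal{I},\mu}^\beta(b)$ obtained from the tower property and contractivity (compare your reach computation with Expressions~\eqref{eq:comm-lip}--\eqref{eq:contractive-lip} in the proof of Theorem~\ref{t:ah-c*-cqms}).
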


\section{Quantum metrics on $C(X)\otimes \A$}
To place quantum metrics on $C(X)\otimes \A$ for $X$ a compact metric and $\A$ a unital AF algebra equipped with a faithful tracial state, first, we will view $C(X)\otimes \A$ as $C(X, \A)$ and then extend the our work with Bice in \cite{Aguilar-Bice17}, where we placed quantum metrics on certain homogeneous C*-algebras, and extend our work with \Latremoliere{} in \cite{Aguilar-Latremoliere15}, where we place quantum metrics on certain AF algebras. We will show our construction recovers the original quantum metrics in many natural ways  including showing a Leibniz rule for elementary tensors in Theorem \ref{t:ah-c*-cqms}. Now, we establish some results about $C(X, \A)$. 

Given a compact metric space $(X, \mathsf{d}_X)$ and a unital AF-algebra $\A=\overline{\cup_{n \in \N} \A_n}^{\|\cdot\|_\A}$ with $\A_0\subseteq \A_1\subseteq \A_2 \subseteq \cdots$ with $\dim(\A_n)< \infty$ for all $n \in \N$, the C*-algebra $C(X, \A)$ is an inductive limit of $(C(X, \A_n))_{n \in \N}$. Now, when $\A$ is equipped with a faithful tracial state, we showed in \cite{Aguilar-Latremoliere15} that $\A$ and each $\A_n$ can be equipped with Lip-norms for which $\A$ is the metric limit of $\A_n$ in \Latremoliere's propinquity.  Hence, our goal is to do the same for  $C(X, \A)$ and $(C(X, \A_n))_{n \in \N}$ in this case. 

 First, we establish that $C(X, \A)$ is an inductive limit in the obvious way if $\A$ is an inductive limit.  This is a classic result that may be difficult to  find in the literature, and thus provide a proof here. We  utilize the argument outlined in \cite[Theorem 3.4]{Kaplansky51} to obtain the following.

\begin{proposition}\label{p:cx-dense}
Let $(X, \dd_X)$ be a compact metric space. Let $\A=\overline{\cup_{n\in \N} \A_n}^{\|\cdot\|_\A}$ be a unital  C*-algebra such that  $\A_n $ is a   C*-subalgebra of $\A$ containing $1_\A$ and $\A_n \subseteq \A_{n+1}$  for each   $n \in \N$.

Consider the unital C*-algebra 
\[
C(X, \A)=\{ f: X \rightarrow \A \mid f \text{ is continuous}\}
\]
equipped with point-wise operations and supremum norm induced by $\A$, and unit defined for all $x \in X$ by $1_{C(X, \A)}(x)=1_\A$.  Note that
$
C(X, \A_n)$  is a unital C*-subalgebra of $C(X,\A)$.  Furthermore, 
\[
C(X, \A)= \overline{\cup_{n \in \N} C(X, \A_n)}^{\|\cdot\|_{C(X, \A)}}.
\]
In particular, if $\A$ is AF, then $C(X, \A)$ is AH (approximately homogeneous \cite[Definition V.2.1.9]{Blackadar06}).
\end{proposition}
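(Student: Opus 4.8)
The plan is to prove the density assertion $C(X,\A) = \overline{\cup_{n\in\N} C(X,\A_n)}^{\|\cdot\|_{C(X,\A)}}$ directly by a partition-of-unity approximation, after clearing away the easy containments. First I would note that each $C(X,\A_n)$ is a unital $*$-subalgebra of $C(X,\A)$: pointwise operations and the supremum norm restrict correctly since $\A_n$ is a C*-subalgebra of $\A$, and the constant function $1_\A$ lies in $C(X,\A_n)$ because $1_\A\in\A_n$. Since $\A_n\subseteq\A_{n+1}$ we get $C(X,\A_n)\subseteq C(X,\A_{n+1})$, so $\cup_n C(X,\A_n)$ is an increasing union of $*$-subalgebras; hence $\overline{\cup_n C(X,\A_n)}$ is a (closed) C*-subalgebra of $C(X,\A)$, and $\overline{\cup_n C(X,\A_n)}\subseteq C(X,\A)$ is trivial. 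Everything thus reduces to showing $\cup_n C(X,\A_n)$ is dense in $C(X,\A)$.

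For the density, fix $f\in C(X,\A)$ and $\varepsilon>0$. Since $X$ is a compact metric space, $f$ is uniformly continuous, so there is $\delta>0$ with $\|f(x)-f(y)\|_\A<\varepsilon/2$ whenever $\dd_X(x,y)<\delta$. Cover $X$ by finitely many open balls $B(x_1,\delta),\dots,B(x_k,\delta)$. For each $i$, since $\A=\overline{\cup_n\A_n}$ there are an index $n_i$ and $a_i\in\A_{n_i}$ with $\|f(x_i)-a_i\|_\A<\varepsilon/2$; set $n=\max\{n_1,\dots,n_k\}$, so that $a_1,\dots,a_k\in\A_n$. Choose a continuous partition of unity $\phi_1,\dots,\phi_k\colon X\to[0,1]$ subordinate to this cover, i.e.\ $\sum_i\phi_i\equiv 1$ and $\phi_i(x)=0$ whenever $\dd_X(x,x_i)\geq\delta$; such a partition exists for any finite open cover of $X$ (one may take the explicit functions $x\mapsto \dist(x,X\setminus B(x_i,\delta))/\sum_j\dist(x,X\setminus B(x_j,\delta))$, whose denominator never vanishes). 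Define $g\colon X\to\A$ by $g(x)=\sum_{i=1}^k\phi_i(x)\,a_i$. Then $g$ is continuous, and since $\A_n$ is a linear subspace closed under finite sums we have $g(x)\in\A_n$ for all $x$; thus $g\in C(X,\A_n)\subseteq\cup_n C(X,\A_n)$.

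It remains to estimate $\|f-g\|_{C(X,\A)}$. For any $x\in X$, using $\sum_i\phi_i(x)=1$,
\begin{equation*}
\|f(x)-g(x)\|_\A = \left\| \sum_{i=1}^k \phi_i(x)\bigl(f(x)-a_i\bigr) \right\|_\A \leq \sum_{i=1}^k \phi_i(x)\,\|f(x)-a_i\|_\A .
\end{equation*}
If $\phi_i(x)\neq 0$ then $\dd_X(x,x_i)<\delta$, so $\|f(x)-a_i\|_\A\leq\|f(x)-f(x_i)\|_\A+\|f(x_i)-a_i\|_\A<\varepsilon$; the terms with $\phi_i(x)=0$ vanish, so the sum is at most $\varepsilon\sum_i\phi_i(x)=\varepsilon$. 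Taking the supremum over $x$ gives $\|f-g\|_{C(X,\A)}\leq\varepsilon$, establishing density and hence the displayed equality. Finally, when $\A$ is AF each $\A_n$ is finite-dimensional, say $\A_n\cong\bigoplus_l M_{m_l}(\C)$, so $C(X,\A_n)\cong\bigoplus_l M_{m_l}(C(X))$ is a finite direct sum of homogeneous C*-algebras; since $C(X,\A)$ is the inductive limit of the $C(X,\A_n)$ by the above, it is AH in the sense of \cite[Definition V.2.1.9]{Blackadar06}.

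I expect no serious obstacle. The only two points that need a word of care are the existence of a continuous partition of unity subordinate to a finite open cover of $X$ — immediate since a compact metric space is normal, or via the explicit formula above — and the need to choose the approximants $a_i$ so that finitely many of them lie in a common $\A_n$, which is exactly what the $\max$ arranges. The argument is the $C(X,\cdot)$-valued analogue of the classical approximation outlined in \cite[Theorem 3.4]{Kaplansky51}.
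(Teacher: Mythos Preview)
Your proof is correct and follows essentially the same partition-of-unity argument as the paper: uniform continuity gives a $\delta$, a finite cover by $\delta$-balls yields centers where $f$ is approximated by elements of some common $\A_n$, and the subordinate partition of unity glues these into an element of $C(X,\A_n)$. The only cosmetic differences are that the paper covers by balls of radius $\delta/2$ (choosing base points inside the partition's supports rather than at the centers, hence needing a triangle inequality), cites Bourbaki for the partition of unity instead of writing the explicit formula, and obtains the slightly sharper bound $(3/4)\varepsilon$ by approximating $f(x_k)$ to within $\varepsilon/4$ rather than $\varepsilon/2$.
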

\begin{proof}
Let $f \in C(X, \A)$. Let $\varepsilon>0$. As $X$ is compact, $f$ is uniformly continuous, and thus there exists $\delta>0$ such that: 
\begin{equation}\label{deltaepsilon}
\mathsf{d}_X(x,y)<\delta\qquad\implies\qquad\|f(x)-f(y)\|_\A <\varepsilon/2
\end{equation}
for all $x,y \in X$.  Define $U(y,\delta/2)=\{x \in X: \mathsf{d}_X (x,y)<\delta/2\}$ for all $y \in X$.  Again, as $X$ is compact, the open cover $\{ U(y,\delta/2)\subseteq X: y \in X\}$ of $X$ has a finite subcover of $X$ given by $y_1, \ldots, y_M\in X$ such that $\cup_{k=1}^M U(y_k, \delta/2)=X$.  Since $X$ is compact Hausdorff, there exists a partition of unity with respect to the cover $\{U(y_1, \delta/2), \ldots, U(y_M, \delta/2)\}$ by \cite[Proposition IX.4.3.3]{Bourbaki-top2}.  In particular, for each $k \in \{1, \ldots, M\}$, there exists a continuous function $p_k: X \rightarrow [0,1]$ such that $\{x \in X: p_k(x)>0\}\neq \emptyset$ and if we define $V_k=\{x \in X: p_k(x)>0\}$, then   $\{V_1, \ldots, V_M\}$ is an open cover of $X$ and $V_k \subseteq \overline{V_k}^{\mathsf{d}_X} \subseteq U(y_k,\delta/2)$ for each $k \in \{1, \ldots, M\}$. Futhermore, we have $\sum_{k=1}^M p_k=1_{C(X)}$, which is the constant $1$ function on $X$.

Now, for each $k \in \{1, \ldots, M\}$, fix $x_k\in V_k$. Since $\overline{\cup_{n \in \N} \A_n}^{\|\cdot\|_\A}=\A$,  for each $k \in \{1, \ldots, M\}$, there exists $N_k \in \N$  and $a_k \in \A_{N_k}$ and $\|f(x_k)-a_k\|_\A <\ee/4.$  Set $N=\max\{N_1, \ldots, N_M\}$.  Now for each $k \in \{1, \ldots, M\}$, consider the function $a_k\cdot p_k \in C(X,\A_N)$, and define $f_\ee=\sum_{k=1}^M a_k \cdot p_k \in C(X, \A_N) \subseteq  C(X, \A)$. 

Next, let $x \in X$, then $x \in V_k$ for some $k \in \{1, \ldots , M\}$.  Thus $\dd_X(x,x_k)\leq \dd_X(x,y_k) +\dd_X(y_k,x_k)<\delta/2+\delta/2=\delta$, and so $\|f(x)-f(x_k)\|_\A< \ee/2$ by \eqref{deltaepsilon}.

Now, we have \begin{align*}
\|f(x)-f_\ee(x)\|_\A & =\left\|f(x) \cdot \sum_{k=1}^M p_k(x) - \sum_{k=1}^M a_k \cdot p_k(x)\right\|_\A \\& = \left\|  \sum_{k=1}^M (f(x)-a_k) \cdot p_k(x)\right\|_\A \leq \sum_{k=1}^M \|f(x)-a_k\|_\A\cdot  p_k(x)\\
& \leq  \sum_{k=1}^M \left( \|f(x)-f(x_k)\|_\A+\|f(x_k)-a_k\|_\A \right)\cdot p_k(x)\\
& <\sum_{k=1}^M \left( \ee/2+\ee/4 \right)\cdot p_k(x) = \sum_{k=1}^M (3/4)\ee\cdot  p_k(x)=(3/4)\ee.
\end{align*}
Hence $\|f-f_\ee\|_{C(X, \A)} \leq (3/4)\ee<\ee$.
Therefore 
\begin{equation}\label{eq:classical-dense}
C(X, \A)= \overline{\cup_{n \in \N} C(X, \A_n)}^{\|\cdot\|_{C(X, \A)}}. 
\end{equation}
Finally, if $\A$ is AF, and thus we assume that $\dim(\A_n)< \infty$ for each $n \in \N$, then $C(X, \A)$ is AH by  \cite[Definition V.2.1.9]{Blackadar06} since $C(X, \A_n)$ is a homogeneous C*-algebra for each $n \in \N$ by \cite[IV.1.4]{Blackadar06}. 
\end{proof}

Next, one of the key tools in providing estimates in propinquity are   conditional expectations.  For instance, we used conditional expectations in \cite{Aguilar-Latremoliere15} to obtain our quantum metrics and convergence results. So, our first task is to extend these conditional expectations on AF-algebras to ones on $C(X, \A)$. Since there are many characterizations of conditional expectations, so we list one of them here and will cite other characterizations in proofs. 

\begin{definition}[{\cite[Definition 1.5.9 and Tomiyama Theorem  1.5.10]{Brown-Ozawa}}]\label{d:cond-exp}
Let $\A, \B$ be a C*-algebras such that $\B$ is a C*-subalgebra of $\A$. A {\em projection} from $\A$ to $\B$ is a linear map $E:\A \rightarrow \B$ such that $E(b)=b$ for all $b \in \B$. A {\em conditional expectation} $E$ from $\A$ onto $\B$ is projection from $\A$ onto $\B$ such that $E$ is contractive.
\end{definition}

\begin{theorem}\label{t:ah-cond-exp}
Let $(X, \mathsf{d}_X)$ be a compact metric space and let $\A=\overline{\cup_{n \in \N} \A_n}^{\|\cdot\|_\A}$ be a unital C*-algebra equipped with faithful tracial state $\tau$ such that $\A_n$ is a finite-dimensional   C*-subalgebra of $\A$ for all $n \in \N$ and $\A_0=\C1_\A \subseteq \A_1 \subseteq \A_2 \subseteq \cdots $. In particular, $\A$ is AF.

For each $n \in \N$, let 
\[
E_{\tau,n} : \A \rightarrow \A_n
\]
be the unique $\tau$-preserving conditional expectation onto $\A_n$ given by \cite[Theorem 3.5]{Aguilar-Latremoliere15}.

If, for each $n \in \N$, we define
\[
E^X_{\tau,n} : C(X, \A) \rightarrow C(X, \A_n)
\]
by $E^X_{\tau,n}(f)(x)=E_{\tau,n}(f(x))$ for all $f \in C(X, \A)$ and $x \in X$, then:
\begin{enumerate}
\item $E^X_{\tau,n}$ is a conditional expectation onto $C(X, \A_n)$, 
\item for each $N,M \in \N$, it holds that $E^X_{\tau,N}\circ E^X_{\tau,M}=E^X_{\tau,\min\{N,M\} }=E^X_{\tau,M}\circ E^X_{\tau,N},$ and
\item for every $x \in X$, it holds that $\tau_x : f\in  C(X, \A)\mapsto \tau(f(x)) \in \C$ is a state on $C(X, \A)$ such that $\tau_x\circ E^X_{\tau,n} =\tau_x$ for all $n \in \N$. 
\end{enumerate} 
\end{theorem}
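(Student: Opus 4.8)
The approach is to verify each of the three properties essentially pointwise, transferring the corresponding known property of $E_{\tau,n}$ on $\A$ to $E^X_{\tau,n}$ on $C(X,\A)$. The only genuine preliminary point is that $E^X_{\tau,n}$ actually lands in $C(X,\A_n)$: since $E_{\tau,n}$ is a bounded (contractive) linear map on $\A$, the composition $x\mapsto E_{\tau,n}(f(x))$ is continuous whenever $f$ is, so $E^X_{\tau,n}(f)\in C(X,\A_n)$, and linearity is inherited pointwise.

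First I would prove (1). That $E^X_{\tau,n}$ is a projection onto $C(X,\A_n)$ is immediate: if $g\in C(X,\A_n)$, then $g(x)\in\A_n$ for every $x$, so $E_{\tau,n}(g(x))=g(x)$ and hence $E^X_{\tau,n}(g)=g$. For contractivity, for any $f\in C(X,\A)$ and any $x\in X$ we have $\|E^X_{\tau,n}(f)(x)\|_\A=\|E_{\tau,n}(f(x))\|_\A\leq\|f(x)\|_\A\leq\|f\|_{C(X,\A)}$; taking the supremum over $x$ gives $\|E^X_{\tau,n}(f)\|_{C(X,\A)}\leq\|f\|_{C(X,\A)}$. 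So $E^X_{\tau,n}$ is a conditional expectation onto $C(X,\A_n)$ in the sense of Definition~\ref{d:cond-exp}.

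Next, (2) follows by evaluating at a point and invoking the analogous compatibility of the $E_{\tau,n}$ from \cite[Theorem 3.5]{Aguilar-Latremoliere15}: for $f\in C(X,\A)$ and $x\in X$,
\[
\bigl(E^X_{\tau,N}\circ E^X_{\tau,M}\bigr)(f)(x)=E_{\tau,N}\bigl(E_{\tau,M}(f(x))\bigr)=E_{\tau,\min\{N,M\}}(f(x))=E^X_{\tau,\min\{N,M\}}(f)(x),
\]
and symmetrically for the other order; since two continuous $\A$-valued functions agreeing at every point are equal, the identity of maps follows. Finally, for (3): $\tau_x$ is a composition of the $*$-homomorphism (evaluation) $f\mapsto f(x)$ with the state $\tau$, hence a state on $C(X,\A)$; and $\tau_x\bigl(E^X_{\tau,n}(f)\bigr)=\tau\bigl(E_{\tau,n}(f(x))\bigr)=\tau(f(x))=\tau_x(f)$, using that $E_{\tau,n}$ is $\tau$-preserving. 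None of the steps presents a real obstacle — the content is entirely in the pointwise transfer — so the main thing to be careful about is simply recording that continuity of $E^X_{\tau,n}(f)$ is what makes the construction well-defined, and that "equal pointwise implies equal" is the device making (2) a statement about maps rather than about values.
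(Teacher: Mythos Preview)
Your proposal is correct and follows essentially the same approach as the paper's proof: both establish well-definedness via continuity of $E_{\tau,n}$, then verify the projection and contractivity properties pointwise for (1), reduce (2) to the known identity $E_{\tau,N}\circ E_{\tau,M}=E_{\tau,\min\{N,M\}}$ on $\A$ via evaluation at $x$, and handle (3) by composing evaluation with $\tau$ and using that $E_{\tau,n}$ is $\tau$-preserving. Your justification that $\tau_x$ is a state (as the composition of a unital $*$-homomorphism with a state) is in fact slightly cleaner than the paper's ``routine to check'' remark.
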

\begin{proof}
Fix $N \in \N$. Let's first show that $E^X_{\tau, N} $  is well-defined. Let $f \in C(X, \A)$.  Assume that $(x_\mu)_{\mu \in \Delta}$ is a net in $X$ that  converges to $x \in X$.  Then, if $\mu \in \Delta$, we have by contractivity of   conditional expectations  
\begin{align*}
\left\|E^X_{\tau, N} (f)(x_\mu)-E^X_{\tau, N} (f)(x)\right\|_\A& = \|E_{\tau, N} (f(x_\mu))-E_{\tau, N} (f(x))\|_\A\\
& =\|E_{\tau, N}(f(x_\mu)-f(x))\|_\A\\
& \leq \|f(x_\mu)-f(x)\|_\A.
\end{align*}
Hence, by continuity of $f$ we have that $ E^X_{\tau, N}(f) \in C(X, \A)$ and since $E_{\tau,N} (f(x)) \in \A_N$ for all $x\in X$, we have $ E^X_{\tau, N}(f) \in C(X, \A_N).$  

Note that $ E^X_{\tau, N}$ is linear by construction.  Now, we check surjectivity and that $ E^X_{\tau, N}$ projects onto $C(X, \A_N).$  Let $f \in C(X, \A_N)$.  Then, $f \in C(X,\A)$, and since $E_{\tau,N}$ projects onto $\A_N$, we have that $ E^X_{\tau, N}(f)(x)=E_{\tau,N} (f(x)) =f(x)$ for all $x \in X$ and so $ E^X_{\tau, N}(f)=f$.  

Next, let's check contractivity of $ E^X_{\tau, N}$.  Let $f \in C(X, \A)$, then for all $x \in X$, we have
\begin{align*}
\|E^X_{\tau, N}(f)(x)\|_\A= \|E_{\tau,N} (f(x))\|_\A\leq \|f(x)\|_\A \leq \|f\|_{C(X, \A)}. 
\end{align*}
Thus $\|E^X_{\tau, N}(f)\|_{C(X,\A)} \leq \|f\|_{C(X, \A)}$, and we note that 
\[\|E^X_{\tau, N}(1_{C(X,\A)})(x)\|_\A=\|E_{\tau,N} (1_\A)\|_\A=\|1_\A\|_\A=\|1_{C(X,\A)}(x)\|_{\A},\] and thus $\|E^X_{\tau, N}(1_{C(X,\A)})\|_{C(X, \A)}=\|1_{C(X,\A)}\|_{C(X,\A)}.$ Therefore by Definition \ref{d:cond-exp}, we have that $E^X_{\tau, N}$ is a conditional expectation onto $C(X,\A_N)$. 

Next, we verify (2). We note that if $N,M \in \N$, then if $f \in C(X, \A)$, then  
\begin{align*}E^X_{\tau, N}(E^X_{\tau, M}(f))(x)&=E_{\tau,N}(E^X_{\tau, M}(f)(x))=E_{\tau,N}(E_{\tau, M}(f(x)))= E_{\tau,\min\{N,M\}}(f(x))\\&= E^X_{\tau,\min\{N,M\}}(f)(x)\end{align*} for all $x \in X$ by \cite[Step 2 of proof of Theorem 3.5]{Aguilar-Latremoliere15}.

Hence $E^X_{\tau, N}\circ E^X_{\tau, M}(f)=E^X_{\tau,\min\{N,M\}}(f)$ for all $f \in C(X, \A)$ and thus $E^X_{\tau, N}\circ E^X_{\tau, M}=E^X_{\tau,\min\{N,M\}}=E^X_{\tau, M}\circ E^X_{\tau, N}$ by switching the roles of $N$ and $M$.

Finally, we establish (3). Let $n \in \N$.  Let $x  \in X$. It is routine to check that $\tau_x$ is a state on $C(X, \A)$ and follows some of the same arguments above since $C(X, \A)$ is unital. Let $f \in C(X, \A)$. We have since $E_{\tau,n}$ is $\tau$-preserving
\begin{align*}
\tau_{x}\left(E^X_{\tau,n}(f)\right)=\tau\left(E^X_{\tau,n}(f)(x)\right)=\tau\left(E_{\tau,n}(f(x))\right)=\tau(f(x))=\tau_x(f),
\end{align*}
and thus $\tau_{x}\circ E^X_{\tau,n}=\tau_x$.
\end{proof}

In order to move forward we need new quantum metrics on $C(X, \A)$ for $\A$ finite-dimensional like the ones from \cite{Aguilar-Bice17}. We present this definition using an arbitrary unital C*-algebra $\A$, and then later show that the following seminorms induce a quantum metric if and only if $\A$ is finite-dimensional as done in \cite{Aguilar-Bice17}.

\begin{definition}\label{d:h-cqms}
 Let $(X, \mathsf{d}_X)$ be a compact metric space and let $\A$ be a unital  C*-algebra.
 
 For all $f \in \sa{C(X, \A)}$ define
 \[
 l^\A_{\mathsf{d}_X}(f)= \sup_{x,y \in X} \frac{\|f(x)-f(y)\|_\A}{\mathsf{d}_X(x,y)}
 \]
 where we set $\frac{0}{0}=0$ when $x=y$. 
 
 Next, if $E: C(X, \A)\rightarrow C(X, \C1_\A)$ is a conditional expectation onto  $C(X, \C1_\A)$ and $r \in \R, r>0$, then  for all $f \in \sa{C(X, \A)}$ define
 \[
 \Lip^{\A,r}_{\mathsf{d}_X,E}(f)=  l^\A_{\mathsf{d}_X}(f)+ \frac{\|f-E(f)\|_{C(X, \A)}}{r} .
 \]
\end{definition}
Now, we show that the above seminorm forms a compact quantum metric space if and only if $\A$ is finite-dimensional. %, while also showing that this seminorm recaptures the classical structure on $C(X)$ with the Lipschitz constant.  
\begin{theorem}\label{t:h-cqms}
Let $(X, \mathsf{d}_X)$ be a compact metric space and let $\A$ be a unital C*-algebra. Let $E: C(X, \A)\rightarrow C(X, \C1_\A)$ be a conditional expectation onto $C(X, \C1_\A).$ Let $r \in \R, r>0$. The following are equivalent:
\begin{enumerate}
\item $\left(C(X, \A), \Lip_{\mathsf{d}_X,E}^{\A,r}\right)$ is a $2$-quasi-Leibniz compact quantum metric space;
\item   $\A$ is finite-dimensional.
\end{enumerate}
\end{theorem}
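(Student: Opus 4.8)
The plan is to prove the two implications separately, with the bulk of the work being the forward direction (1)$\Rightarrow$(2), and then to handle (2)$\Rightarrow$(1) by combining the totally bounded criterion of Theorem~\ref{Rieffel-thm}(2) with an Arzel\`a--Ascoli argument. For the contrapositive of (1)$\Rightarrow$(2), suppose $\dim(\A)=\infty$. The natural strategy is to exhibit a sequence in the set $\{f\in\sa{C(X,\A)}:\Lip^{\A,r}_{\dd_X,E}(f)\le 1\}$ whose image in the quotient $\sa{C(X,\A)}/\R 1_{C(X,\A)}$ (equivalently, one can arrange these to be norm-bounded directly) has no norm-convergent subsequence, thereby violating characterization (3) of Theorem~\ref{Rieffel-thm}. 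Since $\dim(\A)=\infty$, pick a sequence $(a_k)$ in $\sa{\A}$ with $\|a_k\|_\A\le 1$ and no convergent subsequence, and with $E$ applied pointwise killing the scalar part; one then feeds these into \emph{constant} functions $f_k(x)=a_k-\varphi(a_k)1_\A$ for a fixed state $\varphi$ (so that $E(f_k)$ behaves controllably). For constant functions $l^\A_{\dd_X}(f_k)=0$, so $\Lip^{\A,r}_{\dd_X,E}(f_k)=\|f_k-E(f_k)\|_{C(X,\A)}/r$; the subtlety is that $E$ restricted to constants need not be the obvious scalar projection, so one must either first reduce to the case $E(f)(x)=\varphi_x(f(x))1_\A$ for a weak*-continuous field of states $\varphi_x$, or simply observe $\|f_k-E(f_k)\|\ge \tfrac12\mathrm{dist}(a_k,\C 1_\A)$ up to the fixed norm of $E$ on constants, which still diverges along any subsequence after passing to the quotient by $\R 1$. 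This shows the required set in Theorem~\ref{Rieffel-thm}(3) is not compact, so $(C(X,\A),\Lip^{\A,r}_{\dd_X,E})$ is not a compact quantum metric space.

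For (2)$\Rightarrow$(1), assume $\dim(\A)<\infty$. First check the elementary axioms: $\Lip^{\A,r}_{\dd_X,E}$ is a seminorm on $\sa{C(X,\A)}$ (a sum of two seminorms), it is lower semicontinuous since $l^\A_{\dd_X}$ is a supremum of $\|\cdot\|_{C(X,\A)}$-continuous functionals and $f\mapsto\|f-E(f)\|_{C(X,\A)}$ is norm-continuous, and its null space is $\R 1_{C(X,\A)}$: if $\Lip^{\A,r}_{\dd_X,E}(f)=0$ then $l^\A_{\dd_X}(f)=0$ forces $f$ constant, say $f\equiv a$, and then $\|a-E(a)\|=0$ with $E$ a conditional expectation onto $C(X,\C1_\A)$ forces $a\in\C1_\A\cap\sa{\A}=\R1_\A$. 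The main point is then the total-boundedness criterion Theorem~\ref{Rieffel-thm}(2): fix $\varepsilon$-ball $\mathcal{L}_1=\{f:\Lip^{\A,r}_{\dd_X,E}(f)\le 1\}$ and show $\{f\in\mathcal{L}_1:\|f\|_{C(X,\A)}\le r'\}$ is $\|\cdot\|_{C(X,\A)}$-compact for suitable $r'$. Such $f$ are uniformly bounded, uniformly Lipschitz (constant $\le 1$) in the $X$-variable, and take values in the fixed finite-dimensional space $\sa{\A}$; so the family is equi-Lipschitz and pointwise relatively compact, hence by Arzel\`a--Ascoli relatively compact in $C(X,\sa{\A})=\sa{C(X,\A)}$, and it is closed by lower semicontinuity of $\Lip$ and continuity of the norm. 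Boundedness of $\Kantorovich{\Lip}$ then follows (or one checks it directly from $\diam(X,\dd_X)<\infty$ plus $\dim\A<\infty$). Finally one verifies the $2$-quasi-Leibniz inequalities for the Jordan and Lie products: $l^\A_{\dd_X}$ satisfies a Leibniz rule on elementary estimates since $\|f(x)g(x)-f(y)g(y)\|\le\|f(x)-f(y)\|\,\|g\|+\|f\|\,\|g(x)-g(y)\|$, and the term $\|h-E(h)\|/r$ with $h=\tfrac12(fg+gf)$ is controlled by $\|h\|/r\le$ etc.; combining these with the crude bound $\|h-E(h)\|\le 2\|h\|$ and the contractivity of $E$ yields the constant $2$ (this mirrors the quasi-Leibniz estimate in \cite{Aguilar-Bice17}).

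The hard part will be the forward direction, specifically pinning down the interaction between the conditional expectation $E$ and constant functions: because $E$ is only assumed to be \emph{a} conditional expectation onto $C(X,\C1_\A)$ and not the specific $\tau$-preserving one, one cannot assume $E$ acts on a constant $a$ by a single scalar state; rather $E(a)(x)=\psi_x(a)1_\A$ for some field of states, and one needs $\|a-E(a)\|_{C(X,\A)}=\sup_x\|a-\psi_x(a)1_\A\|_\A$ to stay comparable to $\mathrm{dist}(a,\C1_\A)$ uniformly in the choice of the infinite-dimensional direction. I expect this is handled exactly as in \cite[Theorem~2.10]{Aguilar-Bice17}: one uses that $\mathrm{dist}(a,\C1_\A)\le\|a-\psi_x(a)1_\A\|\le 2\|a\|$ always, so the quotient-norm and the $E$-defect differ by a bounded factor, which is all that is needed to break compactness when $\dim\A=\infty$; the rest is a routine citation of the argument already given for the quotient-norm version in \cite{Aguilar-Bice17}.
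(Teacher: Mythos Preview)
Your proposal is largely correct, but the two directions are handled differently from the paper, and one step in your $(2)\Rightarrow(1)$ argument has a small gap.

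For $(2)\Rightarrow(1)$, the paper does not invoke Arzel\`a--Ascoli directly. Instead it compares $\Lip^{\A,r}_{\dd_X,E}$ with the Lip-norm
\[
L(a)=\max\left\{ l^\A_{\dd_X}(a),\; \|a+C(X,\C1_\A)\|_{C(X,\A)/C(X,\C1_\A)}\right\}
\]
from \cite[Theorem~2.10]{Aguilar-Bice17}, notes that $E(a)\in C(X,\C1_\A)$ forces the quotient norm to be at most $\|a-E(a)\|$, so $L\le \Lip^{\A,1}_{\dd_X,E}$ (and $\tfrac{1}{r}\Lip^{\A,1}_{\dd_X,E}\le \Lip^{\A,r}_{\dd_X,E}$ for $r\ge 1$, etc.), and then applies Rieffel's Comparison Lemma \cite[Lemma~1.10]{Rieffel98a}. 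Your direct Arzel\`a--Ascoli route is perfectly valid and more self-contained; the paper's route is shorter because the hard work has already been done in \cite{Aguilar-Bice17}. Note that for your version you should use criterion (4) of Theorem~\ref{Rieffel-thm} with a point-evaluation state $\mu(f)=\varphi(f(x_0))$: then $\mu(f)=0$ together with $\|f-E(f)\|\le r$ and $l^\A_{\dd_X}(f)\le 1$ gives the uniform norm bound $\|f\|_{C(X,\A)}\le 2r+\diam{X}{\dd_X}$ needed before Arzel\`a--Ascoli applies; your parenthetical ``boundedness then follows'' glosses over this.

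The genuine gap is in your $2$-quasi-Leibniz verification. The crude bound $\|h-E(h)\|\le 2\|h\|\le 2\|f\|\,\|g\|$ does \emph{not} yield $\|h-E(h)\|\le 2\bigl(\|f-E(f)\|\,\|g\|+\|g-E(g)\|\,\|f\|\bigr)$: the right-hand side can vanish (take $f,g\in C(X,\C1_\A)$) while your bound need not. The correct argument, which the paper cites as \cite[Lemma~3.2]{Aguilar-Latremoliere15}, uses that $C(X,\C1_\A)$ is a subalgebra together with the identity $ab-E(a)E(b)=(a-E(a))b+E(a)(b-E(b))$ and contractivity of $E$ to obtain $\|ab-E(ab)\|\le 2\bigl(\|a-E(a)\|\,\|b\|+\|b-E(b)\|\,\|a\|\bigr)$; summing this with the Leibniz bound for $l^\A_{\dd_X}$ then gives the $2$-quasi-Leibniz inequality for $\Lip^{\A,r}_{\dd_X,E}$.

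For $(1)\Rightarrow(2)$ your constant-function argument is essentially what the paper intends when it says ``follows the same proof as \cite[Theorem~2.10]{Aguilar-Bice17} up to scaling by $1/r$ and using the fact that conditional expectations are contractive just as states are.'' Your observation that $\mathrm{dist}(a,\C1_\A)\le\|a-\psi_x(a)1_\A\|$ uniformly in $x$ is exactly the replacement needed when a general conditional expectation stands in for the quotient norm.
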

\begin{proof}
We begin with the reverse direction. First, we note that $l^\A_{\mathsf{d}_X}$ is a $2$-quasi-Leibniz seminorm since it is Leibniz by \cite[Proposition 2.4]{Aguilar-Bice17} and $E$ is linear.  Also the expression $\frac{\|(\cdot)-E(\cdot)\|_{C(X, \A)}}{r}$ is $2$-quasi-Leibniz by \cite[Lemma 3.2]{Aguilar-Latremoliere15}. And, the supremum of $2$-quasi-Leibniz seminorms is still $2$-quasi-Leibniz. Next, we check lower semicontinuity.  The expression $l^{\A}_{\mathsf{d}_X} $ is lower semicontinuous   by \cite[Lemma 2.6]{Aguilar-Bice17}.  Also, the expression $\frac{\|(\cdot)-E^X_{\tau,n}(\cdot)\|_{C(X, \A)}}{r}$ is continuous since the norm is continuous and $E^X_{\tau,n}$ is continuous.  Hence  $\Lip^{\A, \beta}_{\mathsf{d}_X, \mathcal{I}, \tau}$ is the supremum of lower semicontinuous maps and is thus lower semicontinuous.

Next, we check that $\{a \in \sa{\A}: \Lip_{\mathsf{d}_X,E}^{\A,r}(a)=0 \}=\R1_{C(X, \A)}$.   Let $x,y \in X$, then
\begin{align*}
\left\| 1_{C(X, \A)}(x)- 1_{C(X, \A)}(y)\right\|_{\A} =\left\|1_\A-1_\A\right\|_{\A}=0,
\end{align*}
and for all $k \in \N$, we have
\[
\left\|1_{C(X, \A)}-E (1_{C(X, \A)})\right\|_{C(X, \A)} = \left\| 1_{C(X, \A)}-1_{C(X, \A)}\right\|_{C(X, \A)}=0
\]
by Theorem \ref{t:ah-cond-exp}. Thus, $\Lip_{\mathsf{d}_X,E}^{\A,r}(1_{C(X, \A)})=0$. Next, assume that $f \in \sa{C(X, \A)}$ such that $\Lip_{\mathsf{d}_X,E}^{\A,r}(f)=0$. Hence, we have that $\|f-E (f)\|_{C(X, \A)}=0$.  Therefore, we have  that 
\begin{equation}\label{eq:kernel}f=E(f) \in  C(X, \C1_\A).
\end{equation}  Thus, for each $x \in X$, we have that $f(x)=r_x1_\A$ for some $r_x \in \R$. Hence, fix $x_0 \in X$ and let $y \in X$,
\begin{align*}
0&=\left\|  f(x_0)- f(y)\right\|_{\A}  = \left\|r_{x_0}1_\A-r_y1_\A\right\|_{\A} \\
&= |r_{x_0}-r_y|\cdot \|1_\A\|_{\A}=|r_{x_0}-r_y|.
\end{align*}
Therefore, $r_{x_0}=r_y$. 
Thus, $f(x)=r_{x_0}1_\A$ for all $x \in X$.  Hence, we have that $f=r_{x_0} 1_{C(X, \A)}$.  Therefore, 
\[
\left\{f \in \sa{C(X, \A)}:\Lip_{\mathsf{d}_X,E}^{\A,r}(f)=0\right\}=\R1_{C(X, \A)}.
\]
Note that the above arguments did not require finite-dimensionality for $\A$ as seen in the hypotheses in the referenced results, and we make this note here for a later proof.

   Since $\A$ is finite-dimensional, we have that the seminorm, defined for all $a \in C(X, \A)$ by
\[
L(a)=\max \left\{ l^\A_{\mathsf{d}_X}(a), \  \|a+C(X, \C1_\A)\|_{C(X, \A)/C(X, \C1_\A)}  \right\}, 
\]
where $\|(\cdot)+C(X, \C1_\A)\|_{C(X, \A)/C(X, \C1_\A)} $ is the quotient norm, is a Lip-norm on $C(X, \A)$  by \cite[Theorem 2.10]{Aguilar-Bice17}. By construction, we have  $\dom{L}=\dom{\Lip_{\mathsf{d}_X,E}^{\A,r}}$, and thus, we also have that $\dom{\Lip_{\mathsf{d}_X,E}^{\A,r}}$ is dense.   Since $E(a) \in C(X, \C1_\A)$ for all $a \in C(X, \A)$, we have that 
\[
L \leq \Lip_{\mathsf{d}_X,E}^{\A,1}.
\]
Hence, by \cite[Comparison Lemma 1.10]{Rieffel98a}, we have that $\left( C(X, \A),  \Lip_{\mathsf{d}_X,E}^{\A,1}\right)$ is a compact quantum metric space. 

Now, assume that $0<r<1$.  Then, by construction, we have that 
\[
\Lip_{\mathsf{d}_X,E}^{\A,1} \leq \Lip_{\mathsf{d}_X,E}^{\A,r},
\]
and so $\left( C(X, \A),  \Lip_{\mathsf{d}_X,E}^{\A,r}\right)$ is a compact quantum metric space by the same argument. Next, assume that $r \geq 1$.  Note that it is easily verified that $\frac{1}{r} \Lip_{\mathsf{d}_X,E}^{\A,1}$ is a Lip-norm. By construction, we have that 
\[
\frac{1}{r}\Lip_{\mathsf{d}_X,E}^{\A,1} \leq \Lip_{\mathsf{d}_X,E}^{\A,r},
\]
and so $\left( C(X, \A),  \Lip_{\mathsf{d}_X,E}^{\A,r}\right)$ is a compact quantum metric space by the same argument.

The forward direction follows the same proof as \cite[Theorem 2.10]{Aguilar-Bice17} up to scaling by $\frac{1}{r}$  and using the fact that conditional expectations are contractive   just as states are.
\end{proof}

\begin{remark}
We note that the above places a compact quantum metric on any C*-algebra of the form $C(X, \A)$ for $(X, \mathsf{d}_X)$ compact metric and $\A$ finite-dimensional C*-algebra. Indeed, by finite-dimensionality and existence of faithful tracial state, there exists a conditional expectation from $\A$ onto $\C1_\A$, which can be extended to a conditional expectation from $C(X, \A)$ onto $C(X, \C1_\A)$ as seen in  Theorem \ref{t:ah-cond-exp}. 
\end{remark}

We are almost ready to present the main  quantum metrics of this article. We note that the above Theorem shows that we MUST do more in the case when $\A$ is infinite dimensional if we still want to include the Lipschitz constant in this manner, and this manner is desirable since it provides a Leibniz rule for elementary tensors as seen in Expression \eqref{eq:leibniz-tensor}. The method to remedy this is to use the Lip-norms from Definition \ref{d:h-cqms} along with techniques from \cite{Aguilar-Latremoliere15, Latremoliere05, Kerr09}. Furthermore, we will see that we recover all structure from previous and classical structure while showing that our construction of Lip-norm satisfies a  Leibniz-type rule on elementary tensors, when $C(X, \A)$ is viewed as $C(X) \otimes \A$, which establishes that our construction is compatible with the tensor product structure in Expression \eqref{eq:leibniz-tensor}. To motivate why we say that this is compatible with the tensor product structure, we present a classical case of tensor products.  First, we introduce some notation and prove a classical lemma.
\begin{notation}\label{n:prod-metric}
Let $(X, \mathsf{d}_X)$ and $(Y, \mathsf{d}_Y)$ be compact metric spaces. We denote the $1$-metric on the Cartesian product $X \times Y$ by $\mathsf{d}^1_{\mathsf{d}_X \times \mathsf{d}_Y}$, where for all $(x_1, y_1), (x_2, y_2) \in X \times Y$, we have 
\[
\mathsf{d}^1_{\mathsf{d}_X \times \mathsf{d}_Y}((x_1, y_1), (x_2, y_2))=\mathsf{d}_X(x_1, x_2)+\mathsf{d}_Y(y_1, y_2).
\]
\end{notation}
\begin{lemma}\label{l:pure-state-norm-comm}
If $\A$ is a unital commutative C*-algebra, then
\[
\|a\|_\A=\sup_{\nu \in \mathscr{PS}(\A)} |\nu(a)|
\]
for all $a \in \A$, where $\mathscr{PS}$ denotes pure states of $\A$.
\end{lemma}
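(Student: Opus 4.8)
The plan is to reduce everything to Gelfand duality. Since $\A$ is a unital commutative C*-algebra, the Gelfand--Naimark theorem provides an isometric $*$-isomorphism $\Gamma\colon\A\to C(\Omega)$, where $\Omega$ is the character space of $\A$ (equivalently its maximal ideal space), a compact Hausdorff space in the relative weak* topology, with $\Gamma(a)(\chi)=\chi(a)$ for every character $\chi\in\Omega$ and every $a\in\A$. The first step is to recall the classical identification of pure states with characters: a character is multiplicative, so its GNS representation is one-dimensional, hence irreducible, so it is a pure state; conversely, a pure state $\nu$ has an irreducible GNS representation, and since $\A$ is commutative the image algebra is contained in its own commutant, which by irreducibility is $\C I$, so together with cyclicity the GNS space is one-dimensional, forcing $\nu$ to be a character. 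This identifies $\mathscr{PS}(\A)$ with $\Omega$, and under $\Gamma$ it matches the pure states of $\A$ with the point evaluations $\delta_\omega$ on $C(\Omega)$.

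Granting this, the second step is immediate: for $a\in\A$,
\[
\sup_{\nu\in\mathscr{PS}(\A)}|\nu(a)|=\sup_{\chi\in\Omega}|\chi(a)|=\sup_{\chi\in\Omega}|\Gamma(a)(\chi)|=\|\Gamma(a)\|_{C(\Omega)}=\|a\|_\A,
\]
the last equality because $\Gamma$ is isometric.

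I expect no genuine obstacle; the only point deserving care is the identification of pure states with characters, which is standard and may simply be cited (e.g.\ from Murphy's or from Brown--Ozawa's book). If one prefers to avoid Gelfand theory entirely, there is a self-contained alternative: every element of $\A$ is normal, so $\|a\|_\A=\sup_{\varphi\in\StateSpace(\A)}|\varphi(a)|$, since the numerical range $\{\varphi(a):\varphi\in\StateSpace(\A)\}$ of a normal element equals $\co{\sigma(a)}$, whose supremum of moduli is the spectral radius $\|a\|_\A$. Then $\StateSpace(\A)$ is weak*-compact and convex with extreme points exactly the pure states, and for each $\theta\in\R$ the functional $\varphi\mapsto\operatorname{Re}(e^{i\theta}\varphi(a))$ is affine and weak*-continuous, so its maximum over $\StateSpace(\A)$ is attained at a pure state; taking the supremum over $\theta$ gives $\sup_{\varphi\in\StateSpace(\A)}|\varphi(a)|=\sup_{\nu\in\mathscr{PS}(\A)}|\nu(a)|$, and combining with the trivial bound $|\nu(a)|\le\|a\|_\A$ yields the claimed equality.
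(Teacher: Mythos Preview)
Your proposal is correct. Your primary argument via Gelfand duality rests on the same key fact the paper uses---that pure states on a unital commutative C*-algebra are exactly the characters---but the packaging differs: the paper does not invoke the Gelfand transform and instead computes $\|a\|_\A^2=\|a^*a\|_\A=\sup_{\nu\in\mathscr{PS}(\A)}\nu(a^*a)$ (citing a result for positive elements), then uses multiplicativity of $\nu$ to get $\nu(a^*a)=|\nu(a)|^2$. Your route is slightly more conceptual, the paper's slightly more self-contained; both are one-line applications of the pure-state $=$ character identification. Your alternative argument via the numerical range of normal elements and a Krein--Milman/Bauer maximum principle reduction to extreme points is genuinely different from the paper's approach and has the merit of not invoking Gelfand theory at all, at the cost of appealing to a bit more convexity machinery.
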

\begin{proof}
Let $a \in \A$.   Note that 
\[
\|a\|^2_\A=\|a^*a\|_\A=\sup_{\nu \in \mathscr{PS}(\A)}  \nu(a^*a)
\]
by \cite[Lemma I.9.10]{Davidson} since $a^*a$ is positive. Thus $\|a\|_\A=\sup_{\nu \in \mathscr{PS}(\A)}  \sqrt{\nu(a^*a)}.$  Next, since pure states on unital commutative C*-algebras  are multiplicative by \cite[Proposition 4.4.1]{Kadison97}, we have that
\begin{align*}
\|a\|_\A& = \sup_{\nu \in \mathscr{PS}(\A)}  \sqrt{\nu(a^*)\nu(a)}= \sup_{\nu \in \mathscr{PS}(\A)}  \sqrt{\overline{\nu(a)}\nu(a)}\\
& = \sup_{\nu \in \mathscr{PS}(\A)}  \sqrt{|\nu(a)|^2}=\sup_{\nu \in \mathscr{PS}(\A)}   |\nu(a)|,
\end{align*}
which completes the proof.
\end{proof}

\begin{theorem}\label{t:comm-tensor}
Let $X,Y$ be compact Hausdorff spaces. It holds that 
\[
C(X \times Y)\cong C(X) \otimes C(Y),
\]
where $C(X) \otimes C(Y)$ is the C*-algebra formed over the tensor product of C*-algebras given by \cite[Chapter 3]{Brown-Ozawa}, which is unique by \cite[Proposition 2.4.2 and Proposition 3.6.12]{Brown-Ozawa}.

In particular, there exists a unique *-isomorphism
\[
c_{T} :   C(X) \otimes C(Y) \rightarrow C(X \times Y)
\]
such that for all $f \in C(X), g \in C(Y)$, it holds that $c_{T}(f \otimes g)(x,y)=f(x)g(y)$ for all $x\in X,y \in Y$.  

If, furthermore, $(X, \mathsf{d}_X)$, $(Y, \mathsf{d}_Y)$ are compact metric spaces, then using notation from Definition \ref{d:h-cqms} and Notation \ref{n:prod-metric}, we have that 
\begin{equation}\label{eq:leibniz-tensor-comm}
\begin{split}
l^\C_{\mathsf{d}^1_{\mathsf{d}_X \times \mathsf{d}_Y}}(c_T(f \otimes g))&  \leq l^\C_{\mathsf{d}_X}(f)\|g\|_{C(Y)}+l^\C_{\mathsf{d}_Y}(g)\|f\|_{C(X)}\\
& = l^\C_{\mathsf{d}_X}(f)\otimes \|g\|_{C(Y)}+l^\C_{\mathsf{d}_Y}(g)\otimes \|f\|_{C(X)}
\end{split}
\end{equation}
for all $f\in C(X), g\in C(Y).$
\end{theorem}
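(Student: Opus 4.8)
The isomorphism $C(X\times Y)\cong C(X)\otimes C(Y)$ is classical: the minimal and maximal tensor norms agree here since $C(X)$ is nuclear (indeed abelian), so uniqueness follows from the cited references in \cite{Brown-Ozawa}; the existence and uniqueness of $c_T$ with $c_T(f\otimes g)(x,y)=f(x)g(y)$ is the Stone--Weierstrass/universal-property statement, which I would simply cite rather than reprove. So the only thing requiring work is the inequality \eqref{eq:leibniz-tensor-comm}, and the equality on the second line is trivial once the first line is established (it is just the definition of the tensor product $\lambda\otimes\mu=\lambda\mu$ of the scalars $l^\C_{\dd_X}(f),\|g\|_{C(Y)}$, etc.).

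First I would fix $f\in C(X)$, $g\in C(Y)$, write $h=c_T(f\otimes g)$ so that $h(x,y)=f(x)g(y)$, and take arbitrary points $(x_1,y_1),(x_2,y_2)\in X\times Y$. The plan is the standard ``add and subtract a mixed term'': estimate
\[
|h(x_1,y_1)-h(x_2,y_2)| \;=\; |f(x_1)g(y_1)-f(x_2)g(y_2)| \;\leq\; |f(x_1)-f(x_2)|\,|g(y_1)| + |f(x_2)|\,|g(y_1)-g(y_2)|,
\]
then bound $|f(x_1)-f(x_2)|\leq l^\C_{\dd_X}(f)\,\dd_X(x_1,x_2)$ and $|g(y_1)-g(y_2)|\leq l^\C_{\dd_Y}(g)\,\dd_Y(y_1,y_2)$ from the definition of $l^\C$, and $|g(y_1)|\leq\|g\|_{C(Y)}$, $|f(x_2)|\leq\|f\|_{C(X)}$. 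This yields
\[
|h(x_1,y_1)-h(x_2,y_2)| \;\leq\; \big(l^\C_{\dd_X}(f)\|g\|_{C(Y)} + l^\C_{\dd_Y}(g)\|f\|_{C(X)}\big)\,\big(\dd_X(x_1,x_2)+\dd_Y(y_1,y_2)\big),
\]
and dividing by $\dd^1_{\dd_X\times\dd_Y}((x_1,y_1),(x_2,y_2))=\dd_X(x_1,x_2)+\dd_Y(y_1,y_2)$ (with the convention $\tfrac00=0$ when the two points coincide) and taking the supremum over all pairs gives exactly the claimed bound on $l^\C_{\dd^1_{\dd_X\times\dd_Y}}(h)$.

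There is essentially no serious obstacle here; the only point to watch is that one must choose the mixed term so that the two error factors are a pure $x$-oscillation times a sup-norm and a pure $y$-oscillation times a sup-norm (rather than, say, $|f(x_1)||g(y_1)-g(y_2)|+|g(y_2)||f(x_1)-f(x_2)|$, which would work equally well but produces the symmetric form $l^\C_{\dd_X}(f)\|g\|+l^\C_{\dd_Y}(g)\|f\|$ with the roles of the sup-norm arguments swapped on each summand—either is fine since we only need an upper bound). One should also note the degenerate cases: if $x_1=x_2$ and $y_1=y_2$ the quotient is $0$ by convention and nothing is needed, and if exactly one coordinate pair coincides the corresponding oscillation term vanishes and the estimate still goes through. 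The second, ``$=$'', line of \eqref{eq:leibniz-tensor-comm} is then just unwinding notation: $l^\C_{\dd_X}(f)$ and $\|g\|_{C(Y)}$ are nonnegative reals, and for reals $\lambda,\mu\in\R\cong\C\otimes\C$ we have $\lambda\otimes\mu=\lambda\mu$, so the two displayed right-hand sides are literally equal. (Lemma \ref{l:pure-state-norm-comm} is not needed for this particular statement—it is presumably used later—so I would not invoke it here.)
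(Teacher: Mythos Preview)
Your argument for the Lipschitz inequality \eqref{eq:leibniz-tensor-comm} is correct and essentially identical to the paper's: both use the same add-and-subtract trick $f(x_1)g(y_1)-f(x_2)g(y_2)=(f(x_1)-f(x_2))g(y_1)+f(x_2)(g(y_1)-g(y_2))$ and bound each piece by a Lipschitz constant times a sup-norm. The only cosmetic difference is that the paper divides by $\mathsf{d}_X+\mathsf{d}_Y$ first and then uses $\frac{|f(x)-f(x')|}{\mathsf{d}_X(x,x')+\mathsf{d}_Y(y,y')}\leq\frac{|f(x)-f(x')|}{\mathsf{d}_X(x,x')}$, whereas you bound the numerator first and divide at the end; these are the same estimate.

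Where you diverge from the paper is in the isomorphism $C(X\times Y)\cong C(X)\otimes C(Y)$. You propose to cite it, which is perfectly legitimate. The paper, however, writes out a proof, and this is exactly where Lemma~\ref{l:pure-state-norm-comm} is used: the paper shows $c_T$ is isometric on the algebraic tensor product by identifying pure states on $C(X)\otimes C(Y)$ with pairs $(\delta_x,\delta_y)$ and invoking $\|a\|=\sup_{\nu\in\mathscr{PS}}|\nu(a)|$, then extends and applies Stone--Weierstrass for surjectivity. So your parenthetical remark that Lemma~\ref{l:pure-state-norm-comm} ``is not needed for this particular statement---it is presumably used later'' is off: it is used precisely here, in the part you chose to black-box. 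This is not a gap in your proof---citing the isomorphism is fine---but you should be aware that the lemma's placement in the paper is deliberate.
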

\begin{proof}
The result $C(X \times Y)\cong C(X) \otimes C(Y)$ is well-known but may be difficult to find in the literature, and thus, we provide a proof.

Let $C(X)\odot C(Y)$ denote the algebra over $\C$ formed over the algebraic tensor product of $C(X)$ and $C(Y)$ \cite[Section 3.1]{Brown-Ozawa}, which is dense in  $ C(X) \otimes C(Y)$ by definition of the C*-algebraic tensor product. For all $f \in C(X), g \in C(Y)$, define
 \[
 c_T(f \otimes g): (x,y) \in X \times Y \mapsto f(x)g(y) \in \C
\]
		and we note that $c_T(f \otimes g) \in C(X \times Y)$.  Now, extend $c_T$ to a *-homomorphism on $C(X)\odot C(Y)$.  Now, we show $c_T$ is an isometry on $C(X)\odot C(Y)$.
		
		Let $a\in C(X) \odot C(Y)$. 
		Note that 
		\begin{equation}\label{eq:pure-norm-comm}\|a\|_{C(X) \otimes C(Y)}=\sup_{\nu \in \mathscr{PS}( C(X) \otimes C(Y))} |\nu(a)|,\end{equation}
	by 	Lemma \ref{l:pure-state-norm-comm} since $C(X)\otimes C(Y)$ is unital and commutative.
		
		 Now, since $a\in C(X) \odot C(Y)$, there exist $n \in \N$, $r_0, \ldots, r_n \in \C, f_0, \ldots, f_n \in C(X), g_0, \ldots, g_n \in C(Y)$ such that $a=\sum_{k=0}^n r_k (f_k\otimes g_k)$.  Let $\mu$ be a pure state on $C(X) \otimes C(Y)$. Then, there exist a pure state $\mu_X$ on $C(X)$ and a pure state $\mu_Y$ on $C(Y)$ such that $\mu(f \otimes g)=\mu_X(f)\mu_Y(g)$ for all $f \in C(X), g \in C(Y)$ by   \cite[Corollary 3.4.3]{Brown-Ozawa}.  Now, by the beginning of the proof of Theorem \ref{t:comm-mk-diam}, we have that $\mu_X=\delta_x$ and $\mu_Y=\delta_y$ for some $x \in X,y \in Y$, so that $\mu(f \otimes g)=\delta_x(f)\delta_y(g)=f(x)g(y)=c_T(f\otimes g)(x,y)$ for all $f \in C(X), g \in C(Y)$. Therefore, we have that
	\begin{align*}
	|\mu(a)|& = \left|\sum_{k=0}^n r_k \cdot \mu (f_k\otimes g_k)\right|=  \left|\sum_{k=0}^n r_k \cdot c_T(f_k\otimes g_k)(x,y)\right|\\
	& = \left|c_T\left(\sum_{k=0}^n r_k (f_k\otimes g_k)\right)(x,y)\right|=\left|c_T(a)(x,y)\right| \leq \|c_T(a)\|_{C(X \times Y)},
	\end{align*}
	which implies that $\|a\|_{C(X)\otimes C(Y)}\leq \|c_T(a)\|_{C(X \times Y)}$ by Expression \eqref{eq:pure-norm-comm} and since $\mu$ was an arbitrary pure state.
	
	 For the other inequality, begin with $(x,y) \in X \times Y$. Now, since $\delta_x$ and $\delta_y$ are pure states, there exists a pure state $\mu$ on $C(X) \otimes C(Y)$ such that $\mu( f \otimes g)=\delta_x(f)\delta_y(g)$ for all $f \in C(X), g \in C(Y)$ by \cite[Theorem 6.4.13]{Murphy90} since the C*-norm on $C(X) \otimes C(Y)$ is the spatial/min C*-norm by nuclearity by commutativity \cite[Takesaki Theorem 6.4.15]{Murphy90}. And, the same argument above shows that
	 \[
	 |c_T(a)(x,y)|=|\mu(a)|\leq \|a\|_{C(X)\otimes C(Y)}
	 \]
	since states are contractive. And thus $\|c_T(a)\|_{C(X \times Y)} \leq \|a\|_{C(X)\otimes C(Y)}$ since $(x,y) \in X \times Y$ was arbitrary. 
	 
	 Therefore $c_T$ is an isometry and thus by density and completeness, $c_T$ extends to an isometric *-homomorphism on $C(X) \otimes C(Y)$, which we  still denote by $c_T$. Now, $c_T(C(X) \otimes C(Y))$ is a unital *-subalgebra of $C(X\times Y)$ by construction. Fix $(x,y), (x',y') \in X \times Y$. We will only do the case when $x\neq x'$ and $y\neq y'$, and the other cases follow similarly. By \cite[Urysohn's Lemma 15.6]{Willard}, there exists $f \in C(X)$ such that $f(x)=1$ and $f(x')=0$, and there exists $g \in C(Y)$ such that $g(y)=1$ and $g(y')=0$.  Hence  $c_T(f\otimes g)(x,y)=f(x)g(y)=1\neq 0=f(x')g(y')=c_T(f\otimes g)(x',y').$  Thus $c_T(C(X) \otimes C(Y))$ also separates points and thus by \cite[Stone-Weierstrass Theorem V.8.1]{Conway90}, we have that $c_T(C(X) \otimes C(Y))$ is dense in $C(X\times Y)$.  Therefore, as $c_T$ is an isometry, we have that $c_T(C(X) \otimes C(Y))=C(X\times Y)$ by completeness. The uniqueness follows by density and continuity.

	Finally, we assume that $(X, \mathsf{d}_X)$ and $(Y, \mathsf{d}_Y)$ are compact metric spaces. Let $f\in C(X), g \in C(Y)$.  Let $(x,y),(x',y') \in X \times Y$.  We then have
	\begin{align*}
	\frac{|c_T(f\otimes g)(x,y)-c_T(f\otimes g)(x',y')|}{\mathsf{d}^1_{\mathsf{d}_X \times \mathsf{d}_Y}((x,y),(x',y'))}& = \frac{| f(x)g(y)-f(x')g(y')|}{\mathsf{d}_X(x,x')+\mathsf{d}_Y(y,y')}\\
	& \leq \frac{| f(x)-f(x')|\cdot|g(y)|}{\mathsf{d}_X(x,x')+\mathsf{d}_Y(y,y')}\\
	& \quad +\frac{|f(x')|\cdot|g(y)-g(y')|}{\mathsf{d}_X(x,x')+\mathsf{d}_Y(y,y')}\\
	& \leq \frac{| f(x)-f(x')|\cdot|g(y)|}{\mathsf{d}_X(x,x') }\\
	& \quad +\frac{|f(x')|\cdot|g(y)-g(y')|}{ \mathsf{d}_Y(y,y')}\\
	& \leq l^\C_{\mathsf{d}_X}(f)\|g\|_{C(Y)}+ l^\C_{\mathsf{d}_Y}(g)\|f\|_{C(X)},
	\end{align*}
	which completes the proof.
	\end{proof}

We now present the quantum metrics we will study for the rest of this article.  These quantum metrics translate a standard categorical limit into a  metric limit in propinquity of the inductive sequence while also providing a noncommutative analogue to the tensor Leibniz rule of Expression \eqref{eq:leibniz-tensor-comm} in Expression \eqref{eq:leibniz-tensor}.
\begin{theorem}\label{t:ah-c*-cqms}
Let $(X, \mathsf{d}_X)$ be a compact metric space and let $\A=\overline{\cup_{n \in \N} \A_n}^{\|\cdot\|_\A}$ be a unital C*-algebra equipped with faithful tracial state $\tau$ such that $\A_n$ is a finite-dimensional  C*-subalgebra of $\A$ for all $n \in \N$ and $\A_0=\C1_\A \subseteq \A_1 \subseteq \A_2 \subseteq \cdots $.  In particular, $\A$ is AF. Denote $\mathcal{I}=(\A_n)_{n \in \N}$. Let $(\beta(n))_{n \in \N}$ be a sequence of positive real numbers that converges to $0$. 

Using notation from Definition \ref{d:h-cqms} and Theorem \ref{t:ah-cond-exp}, if we define
\[
\Lip^{\A, \beta}_{\mathsf{d}_X, \mathcal{I}, \tau} (f) = l^{\A}_{\mathsf{d}_X}\left( f \right)+ \sup_{n \in \N} \left\{ \frac{\left\|f-E^X_{\tau,n}(f)\right\|_{C(X, \A)}}{\beta(n)} \right\} 
\]
 for all $f \in \sa{C(X, \A)}$, then:
 \begin{enumerate}
 \item $\left(C(X, \A_n), \Lip^{\A, \beta}_{\mathsf{d}_X, \mathcal{I}, \tau} \right)$ is a $2$-quasi-Leibniz compact quantum metric space for each $n \in \N$, where 
 \begin{equation}\label{eq:af-h}
 \Lip^{\A, \beta}_{\mathsf{d}_X, \mathcal{I}, \tau}  (f) =   l^{\A_n}_{\mathsf{d}_X}\left( f \right)+  \max_{k \in \{0, \ldots, n\}}\left\{\frac{\left\|f-E^X_{\tau,k}(f)\right\|_{C(X, \A)}}{\beta(k)}\right\} 
 \end{equation}
 for all $f \in \sa{C(X, \A_n)}$,
 \item $\left(C(X, \A), \Lip^{\A, \beta}_{\mathsf{d}_X, \mathcal{I}, \tau} \right)$ is a $2$-quasi-Leibniz compact quantum metric space, 
 \item $\qpropinquity{} \left(\left(C(X, \A), \Lip^{\A, \beta}_{\mathsf{d}_X, \mathcal{I}, \tau} \right),\left(C(X, \A_n), \Lip^{\A, \beta}_{\mathsf{d}_X, \mathcal{I}, \tau} \right) \right) \leq  \beta(n)$, for each $n \in \N$ and thus,
 \[\lim_{n \to \infty} \qpropinquity{} \left(\left(C(X, \A), \Lip^{\A, \beta}_{\mathsf{d}_X, \mathcal{I}, \tau} \right),\left(C(X, \A_n), \Lip^{\A, \beta}_{\mathsf{d}_X, \mathcal{I}, \tau} \right) \right)=0.\]
 \item  if $\A=\C$, then $\left(C(X, \A),\Lip^{\A, \beta}_{\mathsf{d}_X, \mathcal{I}, \tau} \right)$ is fully quantum isometric  (in the sense of Theorem \ref{t:distq}) to $\left(C(X), l^\C_{\mathsf{d}_X}\right)$,
 \item  if $X=\{x\}$, then $\left(C(X, \A), \Lip^{\A, \beta}_{\mathsf{d}_X, \mathcal{I}, \tau} \right)$ is fully quantum isometric  (in the sense of Theorem \ref{t:distq}) to $\left(\A, \Lip^\beta_{\mathcal{I}, \tau}\right)$ of Theorem \ref{AF-lip-norms-thm-union}, and 
 \item  if $\dim(\A)< \infty$ and we set   $\A_n=\A$ for all $n \in \N \setminus \{0\}$ and   $\beta(0)=r$ for some $r\in \R, r>0$, then $\left(C(X, \A),\Lip^{\A, \beta}_{\mathsf{d}_X, \mathcal{I}, \tau} \right)$ is fully quantum isometric  (in the sense of Theorem \ref{t:distq}) to  $\left(C(X,\A), \Lip^{\A, r}_{\mathsf{d}_X, E^X_{\tau, 0}}\right)$ of Definition \ref{d:h-cqms}.
 \end{enumerate}
 Furthermore, if we let $C(X) \otimes \A$ be the C*-algebraic tensor product over $C(X)$ and $\A$, which is unique by  \cite[Proposition 2.4.2 and Proposition 3.6.12]{Brown-Ozawa}, and we let $\pi_{X, \A} : C(X) \otimes \A \rightarrow C(X, \A)$ be the canonical *-isomorphism of \cite[Theorem 6.4.17]{Murphy90} such that for all $f \in C(X), a \in \A$, it holds that $\pi_{X, \A}(f\otimes a)(x)=f(x)\cdot a$ for all $x \in X$, and we denote 
  \begin{equation}\label{eq:tensor-lip} 
  \Lip^{\A, \beta, \otimes}_{\mathsf{d}_X, \mathcal{I}, \tau}=\Lip^{\A, \beta}_{\mathsf{d}_X, \mathcal{I}, \tau}\circ \pi_{X, \A},
  \end{equation} then (1)-(5) all hold with $C(X, \A)$ and $C(X, \A_n)$ replaced by $C(X)\otimes \A$ and $C(X) \otimes \A_n$, respectively, for all $n \in \N$, and  $\Lip^{\A, \beta}_{\mathsf{d}_X, \mathcal{I}, \tau}$ replaced by $\Lip^{\A, \beta, \otimes}_{\mathsf{d}_X, \mathcal{I}, \tau}$ (except for Expression \eqref{eq:af-h}) and (6) holds with only $\left(C(X, \A), \Lip^{\A, \beta}_{\mathsf{d}_X, \mathcal{I}, \tau}\right)$ replaced with  $\left(C(X)\otimes \A, \Lip^{\A, \beta, \otimes}_{\mathsf{d}_X, \mathcal{I}, \tau}\right)$, and note that for all $f \in C(X), a \in \A$, it holds that 
 \begin{equation}\label{eq:leibniz-tensor}
 \begin{split}
 \Lip^{\A, \beta, \otimes}_{\mathsf{d}_X, \mathcal{I}, \tau}(f \otimes a)&= l^\C_{\mathsf{d}_X}(f) \|a\|_\A+   \Lip^\beta_{\mathcal{I}, \tau}(a)\|f\|_{C(X)}\\
 & = l^\C_{\mathsf{d}_X}(f)\otimes \|a\|_\A+   \Lip^\beta_{\mathcal{I}, \tau}(a)\otimes \|f\|_{C(X)},
 \end{split}
 \end{equation}
 where $\Lip^\beta_{\mathcal{I}, \tau}$ is from  Theorem \ref{AF-lip-norms-thm-union}, and thus
 \[
  \Lip^{\A, \beta, \otimes}_{\mathsf{d}_X, \mathcal{I}, \tau}(1_{C(X)} \otimes a)=   \Lip^\beta_{\mathcal{I}, \tau}(a) \text{ and }  \Lip^{\A, \beta, \otimes}_{\mathsf{d}_X, \mathcal{I}, \tau}(f \otimes 1_\A)=    l^\C_{\mathsf{d}_X}(f).
 \]
\end{theorem}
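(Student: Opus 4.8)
The plan is to assemble the statement out of three groups of ingredients: (a) the structural facts about $C(X,\A)$ and the conditional expectations $E^X_{\tau,n}$ already established in Proposition \ref{p:cx-dense} and Theorem \ref{t:ah-cond-exp}; (b) the fact that each truncated algebra $(C(X,\A_n),\Lip^{\A,\beta}_{\mathsf{d}_X,\mathcal{I},\tau})$ is essentially an instance of the finite-dimensional construction of Theorem \ref{t:h-cqms}, combined with the AF-type supremum argument of Theorem \ref{AF-lip-norms-thm-union}; and (c) a bridge computation giving the propinquity estimate $\beta(n)$. For (1), I would first note that on $\sa{C(X,\A_n)}$ one has $E^X_{\tau,k}(f)=f$ for $k\geq n$, so the supremum over $n\in\N$ collapses to the maximum over $k\in\{0,\dots,n\}$, giving Expression \eqref{eq:af-h}; then I would check the three axioms of Definition \ref{Monge-Kantorovich-def} exactly as in the proof of Theorem \ref{t:h-cqms} (lower semicontinuity as a supremum of l.s.c.\ seminorms; the kernel being $\R 1$ because $f=E^X_{\tau,0}(f)$ forces $f\in C(X,\C1_\A)$ and then $l^{\A_n}_{\mathsf{d}_X}(f)=0$ forces $f$ constant; the compactness criterion of Theorem \ref{Rieffel-thm} via comparison with the finite-dimensional Lip-norm $L$ of Theorem \ref{t:h-cqms}, since $\dim C(X,\A_n)<\infty$... wait, $C(X,\A_n)$ is not finite-dimensional, so instead I compare the quotient-seminorm ball with the one from $L$ on $C(X,\A_n)$, which is a Lip-norm by Theorem \ref{t:h-cqms} applied with $\A$ replaced by $\A_n$). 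Quasi-Leibniz with constant $2$ follows because $l^{\A_n}_{\mathsf{d}_X}$ is Leibniz (\cite[Proposition 2.4]{Aguilar-Bice17}), each $\frac{\|\cdot - E^X_{\tau,k}(\cdot)\|}{\beta(k)}$ is $2$-quasi-Leibniz (\cite[Lemma 3.2]{Aguilar-Latremoliere15}, since $E^X_{\tau,k}$ is a $\tau_x$-compatible conditional expectation), and both the sum of a Leibniz and a $2$-quasi-Leibniz seminorm and the supremum of $2$-quasi-Leibniz seminorms are $2$-quasi-Leibniz.

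For (2), the argument is the same modulo the passage to the inductive limit: lower semicontinuity and quasi-Leibniz are inherited verbatim; the kernel computation is identical; the only real point is the compactness of $\{f : \Lip^{\A,\beta}_{\mathsf{d}_X,\mathcal{I},\tau}(f)\leq 1,\ \mu(f)=0\}$ for a fixed reference state $\mu$ (say $\mu=\tau_{x_0}$), and here I would run the standard AF argument: such an $f$ satisfies $\|f-E^X_{\tau,n}(f)\|\leq\beta(n)\to 0$, so $f$ is a uniform limit of its truncations $E^X_{\tau,n}(f)$, each of which lies in a fixed ball of the finite-dimensional-over-$C(X)$ space $(C(X,\A_n),L_n)$; together with total boundedness of each truncated ball and the $\beta(n)\to 0$ tail estimate, a diagonal/Arzelà–Ascoli argument gives total boundedness, hence (with l.s.c.) compactness — this is precisely the mechanism of \cite[Theorem 3.5]{Aguilar-Latremoliere15} transplanted through Theorem \ref{t:ah-cond-exp}. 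For (3), the bridge $\gamma_n=(C(X,\A),\mathrm{id},\iota_n,1_{C(X,\A)})$ from $C(X,\A)$ to $C(X,\A_n)$ with trivial pivot $1$ has height $0$ (the $1$-level set of $1$ is all of $\StateSpace(C(X,\A))$, which surjects onto $\StateSpace(C(X,\A_n))$ under restriction), and reach $\leq\beta(n)$: given $f$ with $\Lip^{\A,\beta}_{\mathsf{d}_X,\mathcal{I},\tau}(f)\leq 1$, the element $E^X_{\tau,n}(f)$ lies in the unit ball on the $C(X,\A_n)$ side (monotonicity $l^{\A_n}_{\mathsf{d}_X}(E^X_{\tau,n}f)\le l^{\A}_{\mathsf{d}_X}(f)$ plus $E^X_{\tau,k}E^X_{\tau,n}=E^X_{\tau,\min\{k,n\}}$ from Theorem \ref{t:ah-cond-exp}(2)), and $\|f-E^X_{\tau,n}(f)\|\leq\beta(n)$, which is exactly the bridge seminorm bound since the pivot is $1$; symmetrically a $g$ in the $\A_n$-ball is already in the $\A$-ball with distance $0$. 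Theorem \ref{t:distq} then gives $\qpropinquity{}\leq\bridgelength{\gamma_n}{\cdot,\cdot}=\max\{0,\beta(n)\}=\beta(n)$.

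Items (4), (5), (6) are identifications of seminorms: if $\A=\C$ then every $E^X_{\tau,n}$ is the identity on $C(X,\C)$, the supremum term vanishes, and $\Lip=l^\C_{\mathsf{d}_X}$; if $X=\{x\}$ then $C(X,\A)\cong\A$, $l^{\A}_{\mathsf{d}_X}\equiv 0$, $E^X_{\tau,n}$ becomes $E_{\tau,n}$, and the definition reduces to $\Lip^\beta_{\mathcal{I},\tau}$ of Theorem \ref{AF-lip-norms-thm-union}; if $\dim\A<\infty$ with $\A_n=\A$ for $n\geq 1$ then for $n\geq 1$ the term $\|f-E^X_{\tau,n}f\|/\beta(n)=0$, so the supremum is attained at $n=0$ and equals $\|f-E^X_{\tau,0}f\|/\beta(0)=\|f-E^X_{\tau,0}f\|/r$, recovering $\Lip^{\A,r}_{\mathsf{d}_X,E^X_{\tau,0}}$ of Definition \ref{d:h-cqms}. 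The tensor reformulation is immediate: $\pi_{X,\A}$ of \cite[Theorem 6.4.17]{Murphy90} is a full quantum isometry by fiat from the definition \eqref{eq:tensor-lip}, so (1)--(6) transfer (Expression \eqref{eq:af-h} being the sole exception, as it references the explicit $C(X,\A_n)$ form). Finally, for the Leibniz identity \eqref{eq:leibniz-tensor}: $\pi_{X,\A}(f\otimes a)(x)=f(x)a$, so $l^{\A}_{\mathsf{d}_X}(\pi_{X,\A}(f\otimes a))=\sup_{x\neq y}\frac{\|f(x)a-f(y)a\|_\A}{\mathsf{d}_X(x,y)}=l^\C_{\mathsf{d}_X}(f)\|a\|_\A$; and $E^X_{\tau,n}(\pi_{X,\A}(f\otimes a))(x)=E_{\tau,n}(f(x)a)=f(x)E_{\tau,n}(a)$ by $\C$-linearity, whence $\|\pi_{X,\A}(f\otimes a)-E^X_{\tau,n}(\pi_{X,\A}(f\otimes a))\|_{C(X,\A)}=\|f\|_{C(X)}\|a-E_{\tau,n}(a)\|_\A$, so the supremum over $n$ equals $\|f\|_{C(X)}\Lip^\beta_{\mathcal{I},\tau}(a)$; adding gives \eqref{eq:leibniz-tensor}, and the two boundary cases follow by setting $f=1_{C(X)}$ (so $l^\C_{\mathsf{d}_X}(f)=0$, $\|f\|_{C(X)}=1$) or $a=1_\A$ (so $\Lip^\beta_{\mathcal{I},\tau}(a)=0$, $\|a\|_\A=1$). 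The main obstacle is the compactness argument in (2): making the diagonal/Arzelà–Ascoli passage from the finite-dimensional-over-$C(X)$ truncations to the full limit rigorous, which is where the interplay of Theorem \ref{t:h-cqms}, Theorem \ref{t:ah-cond-exp}(2)--(3), and the tail estimate $\beta(n)\to 0$ all has to be used simultaneously — everything else is bookkeeping transported from \cite{Aguilar-Bice17} and \cite{Aguilar-Latremoliere15}.
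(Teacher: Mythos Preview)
Your proposal is correct and follows the paper's approach almost exactly: the same comparison with $\Lip^{\A_n,\beta(0)}_{\mathsf{d}_X,E^X_{\tau,0}}$ via Rieffel's Comparison Lemma for (1), the same total-boundedness-via-truncation for (2), the same trivial-pivot bridge for (3), and the same direct identifications for (4)--(6) and the tensor Leibniz formula. Two small remarks: the compactness in (2) needs no diagonal or Arzel\`a--Ascoli extraction --- the paper simply fixes a single $N$ with $\beta(N)<\varepsilon/2$, proves the key contraction $\Lip^{\A,\beta}_{\mathsf{d}_X,\mathcal{I},\tau}(E^X_{\tau,N}f)\leq \Lip^{\A,\beta}_{\mathsf{d}_X,\mathcal{I},\tau}(f)$ (exactly your monotonicity-plus-$E^X_{\tau,k}E^X_{\tau,N}=E^X_{\tau,\min\{k,N\}}$ observation from (3), together with $\tau_{x_0}\circ E^X_{\tau,N}=\tau_{x_0}$ from Theorem \ref{t:ah-cond-exp}(3)), and then any $\varepsilon/2$-net of the truncated ball $L^N_{\tau_{x_0}}$ is already an $\varepsilon$-net of the full ball $L_{\tau_{x_0}}$; and the paper explicitly verifies density of $\dom{\Lip^{\A,\beta}_{\mathsf{d}_X,\mathcal{I},\tau}}$ in $\sa{C(X,\A)}$ via $\bigcup_n \dom{l^{\A_n}_{\mathsf{d}_X}}$ and Proposition \ref{p:cx-dense}, which you should not omit.
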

\begin{proof}
(1)  We will prove much of (2) in proving (1). First, we note that  $\Lip^{\A, \beta}_{\mathsf{d}_X, \mathcal{I}, \tau}$ is a $2$-quasi-Leibniz seminorm that is lower semicontinuous  by the same argument as Theorem \ref{t:h-cqms} since finite-dimensionality was not required for this part of the argument and the supremum of lower semicontinuous maps is again lower semicontinuous.

 Next, let's show that $\dom{\Lip^{\A, \beta}_{\mathsf{d}_X, \mathcal{I}, \tau} }$ is dense in $\sa{C(X, \A)}$.  Let $n \in \N$.  Let $f \in \dom{l^{\A_n}_{\mathsf{d}_X}}$.  Then, we have  $f \in \sa{C(X, \A_n)}$ and $l^{\A_n}_{\mathsf{d}_X}(f) < \infty$.  However, since the C*-norm on $\A_n$ is given by the C*-norm on $\A$ by assumption, we have that $l^{\A}_{\mathsf{d}_X}(f)=l^{\A_n}_{\mathsf{d}_X}(f)< \infty$. Also, for each $k \in \N, k \geq n$, we have that 
\[
\left\| f-E^X_{\tau,k}(f)\right\|_{C(X, \A)} = \left\| f-f\right\|_{C(X, \A)}=0
\]
by Theorem \ref{t:ah-cond-exp}. Hence,  \[\sup_{n \in \N}\left\{\frac{\left\|f-E^X_{\tau,n}(f) \right\|_{C(X, \A)}}{\beta(n)}\right\}=\max_{k \in \{0, \ldots, n\}}\left\{ \frac{\left\|f-E^X_{\tau,n}(f) \right\|_{C(X, \A)}}{\beta(n)}\right\}.\] Thus, $\Lip^{\A, \beta}_{\mathsf{d}_X, \mathcal{I}, \tau}(f)< \infty$ for all $f \in \dom{l^{\A_n}_{\mathsf{d}_X}}.$  Since $n \in \N$ was arbitrary, we have that
\[
\cup_{n \in \N} \dom{l^{\A_n}_{\mathsf{d}_X}} \subseteq \dom{\Lip^{\A, \beta}_{\mathsf{d}_X, \mathcal{I}, \tau}}.
\]
Now, $\dom{l^{\A_n}_{\mathsf{d}_X}}$ is dense in $\sa{C(X, \A_n)}$ for all $n \in \N$ by   \cite[Lemma 2.6]{Aguilar-Bice17}, and thus $\cup_{n \in \N} \dom{l^{\A_n}_{\mathsf{d}_X}}$ is dense in $\sa{C(X, \A)}$.  Therefore, $\dom{\Lip^{\A, \beta}_{\mathsf{d}_X, \mathcal{I}, \tau}}$ is dense in $\sa{C(X, \A)}$ by Proposition \ref{p:cx-dense}. Furthermore, we have also established Expression  \eqref{eq:af-h}.

Next, we check the kernel of $\Lip^{\A, \beta}_{\mathsf{d}_X, \mathcal{I}, \tau}$.  Let $x,y \in X$, then
\begin{align*}
\left\| 1_{C(X, \A)}(x)- 1_{C(X, \A)}(y)\right\|_{\A} =\left\|1_\A-1_\A\right\|_{\A}=0,
\end{align*}
and for all $k \in \N$, we have
\[
\left\|1_{C(X, \A)}-E^X_{\tau,k}(1_{C(X, \A)})\right\|_{C(X, \A)} = \left\| 1_{C(X, \A)}-1_{C(X, \A)}\right\|_{C(X, \A)}=0
\]
by Theorem \ref{t:ah-cond-exp}. Thus, $\Lip^{\A, \beta}_{\mathsf{d}_X, \mathcal{I}, \tau}(1_{C(X, \A)})=0$. Next, assume that $f \in \sa{C(X, \A)}$ such that $\Lip^{\A, \beta}_{\mathsf{d}_X, \mathcal{I}, \tau}(f)=0$. Hence, we have that $\|f-E^X_{\tau, 0}(f)\|_{C(X, \A)}=0$.  Therefore, we have  that $f=E^X_{\tau, 0}(f) \in  C(X, \C1_\A)$. And, the rest of the argument that \[
\left\{f \in \sa{C(X, \A)}: \Lip^{\A, \beta}_{\mathsf{d}_X, \mathcal{I}, \tau}(f)=0\right\}=\R1_{C(X, \A)}.
\] follows from the same argument after Expression \eqref{eq:kernel}.

Now, let $n \in \N$, by Expression \eqref{eq:af-h}, we have that
\[
\Lip_{\mathsf{d}_X, E_{\tau,0}^X}^{\A_n,\beta(0)} (f) \leq  \Lip^{\A, \beta}_{\mathsf{d}_X, \mathcal{I}, \tau}(f)
\]
for all $f\in \sa{C(X, \A_n)}$.  Hence, $\Lip^{\A, \beta}_{\mathsf{d}_X, \mathcal{I}, \tau}$ is a Lip-norm on $C(X, \A_n)$ by \cite[Comparison Lemma 1.10]{Rieffel98a} since $\Lip_{\mathsf{d}_X, E_{\tau,0}^X}^{\A_n,\beta(0)}$ is a Lip-norm on $C(X, \A_n)$  by Theorem \ref{t:h-cqms}. This completes (1).

(2) By the details verified in part (1) above, we only have to check that $\Kantorovich{ \Lip^{\A, \beta}_{\mathsf{d}_X, \mathcal{I}, \tau}}$ metrizes the weak* topology of $\StateSpace(C(X, \A))$.  To accomplish this, we will use Theorem \ref{Rieffel-thm}.  Fix $x_0 \in X$ and consider $\tau_{x_0} \in \StateSpace(C(X, \A))$ of Theorem \ref{t:ah-cond-exp}. We will now show that 
\[
L_{\tau_{x_0}}=\left\{ f \in \sa{C(X, \A)} : \Lip^{\A, \beta}_{\mathsf{d}_X, \mathcal{I}, \tau}(f)\leq 1 \text{ and } \tau_{x_0}(f)=0\right\}
\]
is compact.  It is already closed since $\tau_{x_0}$ is continuous and $ \Lip^{\A, \beta}_{\mathsf{d}_X, \mathcal{I}, \tau}$ is lower semicontinuous. So, we only have to show that $L_{\tau_{x_0}}$ is totally bounded. Let $\varepsilon>0$.  There exists $N \in \N$ such that $\beta(N)< \varepsilon/2$. Now, since $\left(C(X, \A_N), \Lip^{\A, \beta}_{\mathsf{d}_X, \mathcal{I}, \tau}\right)$ is a compact quantum metric space by part (1), 
we have that the set 
\[L^N_{\tau_{x_0}}=\left\{ f \in \sa{C(X, \A)} : \Lip^{\A, \beta}_{\mathsf{d}_X, \mathcal{I}, \tau}(f)\leq 1 \text{ and } \tau_{x_0}(f)=0\right\}\]
is totally bounded by Theorem \ref{Rieffel-thm} since $\tau_{x_0} \in \StateSpace(C(X, \A_N)).$ Hence, there exists a finite $\varepsilon/2$-net $\{g_0, \ldots, g_M\} \subseteq L^N_{\tau_{x_0}}$ of $L^N_{\tau_{x_0}}$. We will now show that $\{g_0, \ldots, g_M\}$ is a finite $\varepsilon$-net of $L_{\tau_{x_0}}$.  First, we note that $\{g_0, \ldots, g_M\}\subseteq L_{\tau_{x_0}}.$  Let $f \in  L_{\tau_{x_0}}$.  Note that $E^X_{\tau,N}(f) \in \sa{C(X, \A_N)}$ since conditional expectations are positive by \cite[Theorem 1.5.10 (Tomiyama)]{Brown-Ozawa}. Now, $f \in L_{\tau_{x_0}}$ implies that $\Lip^{\A, \beta}_{\mathsf{d}_X, \mathcal{I}, \tau}(f)\leq 1$, which implies that 
\begin{equation}\label{eq:cqms-est}
\left\|f-E^X_{\tau,N}(f)\right\|_{C(X,\A)} \leq \beta(N)< \varepsilon/2.
\end{equation}
Note that $0=\tau_{x_0}(f)=\tau_{x_0}(E^X_{\tau,N}(f))$ by Theorem \ref{t:ah-cond-exp}. Now, we will show that $\Lip^{\A, \beta}_{\mathsf{d}_X, \mathcal{I}, \tau}(E^X_{\tau,N}(f))\leq \Lip^{\A, \beta}_{\mathsf{d}_X, \mathcal{I}, \tau}(f)\leq 1.$

Let $n \in \N$. Let $x,y \in X$, then 
\begin{equation*}
\begin{split}
 \left\|E^X_{\tau,N}(f)(x)-E^X_{\tau,N}(f)(y)\right\|_{\A} & =  \left\|E_{\tau,N}(f(x))-E_{\tau,N}(f(y))\right\|_{\A} \\
& =  \left\|E_{\tau,N}(f(x)- f(y))\right\|_{\A}   \leq  \left\| f(x)- f(y)\right\|_{\A} 
\end{split}
\end{equation*}
by Theorem \ref{t:ah-cond-exp} and thus
\begin{equation}\label{eq:comm-lip} l^\A_{\mathsf{d}_X}( E^X_{\tau,N}(f))\leq l^{\A}_{\mathsf{d}_X}(f).\end{equation}

Let   $n \geq N$.  Then,  by Theorem \ref{t:ah-cond-exp}
\begin{align*}
\frac{\left\| E^X_{\tau,N}(f)- E^X_{\tau,n}\left(E^X_{\tau,N}(f)\right)\right\|_{C(X, \A)}}{\beta(n)} &= \frac{\left\| E^X_{\tau,N}(f)- E^X_{\tau,\min\{n,N\}}(f)\right\|_{C(X, \A)}}{\beta(n)}\\
&= \frac{\left\| E^X_{\tau,N}(f)- E^X_{\tau,N}(f)\right\|_{C(X, \A)}}{\beta(n)}\\
&=0\leq \frac{\left\|  f- E^X_{\tau,n}(f) \right\|_{C(X, \A)}}{\beta(n)}.
\end{align*}
Next, if $n \leq N-1$, then by Theorem \ref{t:ah-cond-exp}
\begin{align*}
\frac{\left\| E^X_{\tau,N}(f)- E^X_{\tau,n}\left(E^X_{\tau,N}(f)\right)\right\|_{C(X, \A)}}{\beta(n)} &= \frac{\left\| E^X_{\tau,N}(f)-  E^X_{\tau,N}\left(E^X_{\tau, n}(f)\right)\right\|_{C(X, \A)}}{\beta(n)}\\
&= \frac{\left\| E^X_{\tau,N}\left(f- E^X_{\tau,n}(f)\right)\right\|_{C(X, \A)}}{\beta(n)}\\
&\leq  \frac{\left\|  f- E^X_{\tau,n}(f) \right\|_{C(X, \A)}}{\beta(n)} .
\end{align*}
Hence
\begin{equation}\label{eq:af-lip}
\sup_{n \in \N} \left\{ \frac{\left\|  E^X_{\tau,N}(f)- E^X_{\tau,n}(E^X_{\tau,N}(f)) \right\|_{C(X, \A)}}{\beta(n)}\right\} \leq \sup_{n \in \N} \left\{ \frac{\left\|   f- E^X_{\tau,n}( f) \right\|_{C(X, \A)}}{\beta(n)}\right\}.
\end{equation}
Therefore, combining Expressions \eqref{eq:comm-lip} and \eqref{eq:af-lip}, we have  
\begin{equation}\label{eq:contractive-lip}\Lip^{\A, \beta}_{\mathsf{d}_X, \mathcal{I}, \tau}(E^X_{\tau,N}(f))\leq \Lip^{\A, \beta}_{\mathsf{d}_X, \mathcal{I}, \tau}(f),
\end{equation}
and thus $\Lip^{\A, \beta}_{\mathsf{d}_X, \mathcal{I}, \tau}(E^X_{\tau,N}(f))\leq 1.$   Hence $E^X_{\tau,N}(f) \in L^N_{\tau_{x_0}}$.  Therefore, there exists $j \in \{0, \ldots, M\}$ such that
\[
\|E^X_{\tau,N}(f)-g_j\|_{C(X, \A)} < \varepsilon/2.
\]
Combining this with Expression \eqref{eq:cqms-est}, we have that
\[
\|f-g_j \|_{C(X, \A)} \leq \|f-E^X_{\tau,N}(f)\|_{C(X, \A)} + \|E^X_{\tau,N}(f)-g_j\|_{C(X, \A)}< \varepsilon/2+\varepsilon/2=\varepsilon.
\]
Therefore, $L_{\tau_{x_0}}$ is totally bounded and thus compact.  Hence $\Lip^{\A, \beta}_{\mathsf{d}_X, \mathcal{I}, \tau}$ is a $(2,0)$-quasi-Leibniz Lip-norm on $C(X, \A)$ by Theorem \ref{Rieffel-thm}.

(3) Let $n\in \N$. Consider the bridge of Definition \ref{bridge-def}  from $C(X, \A_n)$ to $C(X, \A)$ given by $\gamma=(C(X, \A), \iota_n, \mathrm{id}_{C(X,\A)}, 1_{C(X, \A)})$.  In this case, we have that \[\StateSpace_1(C(X, \A)|1_{C(X, \A)})=\StateSpace(C(X, \A)).\] Thus, the height of this bridge (Definition \ref{d:height}) is $0.$ Thus, the length of the bridge (Definition \ref{d:length}) is equal to its reach (Definition \ref{d:reach}), which we estimate now. 

First, let $f \in \sa{C(X, \A_n)}$ such that $ \Lip^{\A, \beta}_{\mathsf{d}_X, \mathcal{I}, \tau}(f)\leq 1$.  Then $f \in \sa{C(X, \A)}$ such that $ \Lip^{\A, \beta}_{\mathsf{d}_X, \mathcal{I}, \tau}(f)\leq 1$, and $\|f-f\|_{C(X, \A)}= 0\leq \beta(n)$.

Next, let $f \in \sa{C(X, \A)}$ such that $ \Lip^{\A, \beta}_{\mathsf{d}_X, \mathcal{I}, \tau}(f)\leq 1$. Thus, by construction, we have that $\|f-E^X_{\tau,n}(f)\|_{C(X, \A)}\leq \beta(n)$. Now, $E^X_{\tau,n}(f) \in \sa{C(X, \A_n)}$ since conditional expectations are positive by \cite[Theorem 1.5.10 (Tomiyama)]{Brown-Ozawa}, and we have that $\Lip^{\A, \beta}_{\mathsf{d}_X, \mathcal{I}, \tau}(E^X_{\tau,n}(f))\leq \Lip^{\A, \beta}_{\mathsf{d}_X, \mathcal{I}, \tau}(f)\leq 1$ by Expression \eqref{eq:contractive-lip}. Then $f \in \sa{C(X, \A)}$ such that $ \Lip^{\A, \beta}_{\mathsf{d}_X, \mathcal{I}, \tau}(f)\leq 1$.

Thus, we have that the reach $\varrho(\gamma|\Lip^{\A, \beta}_{\mathsf{d}_X, \mathcal{I}, \tau},\Lip^{\A, \beta}_{\mathsf{d}_X, \mathcal{I}, \tau})\leq \beta(n)$  by definition of the Hausdorff distance.  Thus, the length $\lambda(\gamma|\Lip^{\A, \beta}_{\mathsf{d}_X, \mathcal{I}, \tau},\Lip^{\A, \beta}_{\mathsf{d}_X, \mathcal{I}, \tau})\leq \beta(n).$ Therefore, by Theorem \ref{t:distq}, part (3) is complete since $(\beta(n))_{n \in \N}$ is assumed to converge to $0$.

(4) In this case, we have that $f=E^X_{\tau,n}(f)$ for all $n \in \N$, and the result follows, and in fact, we have $\left(C(X, \A),\Lip^{\A, \beta}_{\mathsf{d}_X, \mathcal{I}, \tau} \right)=\left(C(X), l^\C_{\mathsf{d}_X}\right)$. 

(5) Consider the *-isomorphism given by $a \in \A \mapsto (x \mapsto a) \in C(\{x\}, \A)$. Now, for all  $n \in \N$ and $f \in C(\{x\}, \A_n)$, we have that $l^{\A_n}_{\mathsf{d}_X}(f)=0$ by definition. Thus, by construction of $\Lip^{\A, \beta}_{\mathsf{d}_X, \mathcal{I}, \tau}$ and $E^{\{x\}}_{\tau, n}$ for all $n \in \N$, the proof is complete. 

(6) This is immediate by construction, and in fact,  we have $\left(C(X, \A),\Lip^{\A, \beta}_{\mathsf{d}_X, \mathcal{I}, \tau} \right)=\left(C(X,\A), \Lip^{\A, r}_{\mathsf{d}_X, E^X_{\tau, 0}}\right)$.

Finally, we consider $C(X) \otimes \A$.  Since $\pi_{X, \A}$ is a *-isomorphism, we have that  $\left( C(X) \otimes \A, \Lip^{\A, \beta, \otimes}_{\mathsf{d}_X, \mathcal{I}, \tau}\right)$ and   $\left( C(X) \otimes \A_n, \Lip^{\A, \beta, \otimes}_{\mathsf{d}_X, \mathcal{I}, \tau}\right)$ for all $n \in \N$ are $2$-quasi-Leibniz compact quantum metric spaces, and by construction, we have 
$\left(C(X, \A),\Lip^{\A, \beta}_{\mathsf{d}_X, \mathcal{I}, \tau} \right)$ is fully quantum isometric to  $\left( C(X) \otimes \A, \Lip^{\A, \beta, \otimes}_{\mathsf{d}_X, \mathcal{I}, \tau}\right)$, and $\left(C(X, \A_n),\Lip^{\A, \beta}_{\mathsf{d}_X, \mathcal{I}, \tau} \right)$ is fully quantum isometric to  $\left( C(X) \otimes \A_n, \Lip^{\A, \beta, \otimes}_{\mathsf{d}_X, \mathcal{I}, \tau}\right)$ for all $n \in \N$ since the restriction of $\pi_{X, \A}$ to $C(X) \otimes \A_n$ is a *-isomorphism onto $C(X, \A_n)$. Thus, (1)-(6) (except for Expression \eqref{eq:af-h})   all hold with $C(X, \A)$ and $C(X, \A_n)$ replaced by $C(X)\otimes \A$ and $C(X) \otimes \A_n$, respectively, for all $n \in \N$, and  $\Lip^{\A, \beta}_{\mathsf{d}_X, \mathcal{I}, \tau}$ replaced by $\Lip^{\A, \beta, \otimes}_{\mathsf{d}_X, \mathcal{I}, \tau}$ since full quantum isometry is an equivalence relation and distance $0$ in $\qpropinquity{}$ is equivalent to existence of full quantum isometry by Theorem \ref{t:distq}.

Now, we verify Expression \eqref{eq:leibniz-tensor}.  Let $f \in C(X), a \in \A$. Let $f^X \in C(X, \A)$ denote the function defined for all $x \in X$ by $f^X(x)=f(x)\cdot 1_\A$.  Let $a^X \in C(X, \A)$ denote the function defined for all $x \in X$ by $a^X(x)=a$.  Thus $f^X \in C(X, \C1_\A)=C(X, \A_0)\subseteq C(X, \A_n)$ for all $n \in \N$. Also note that, we have that $\pi_{X,\A}(f \otimes a)=f^Xa^X.$  Hence, for all $n \in \N$, we have that $E^X_{\tau,n}(\pi_{X,\A}(f \otimes a))=E^X_{\tau,n}(f^Xa^X)=f^XE^X_{\tau,n}(a^X)$ for all $n \in \N$ since $E^X_{\tau,n}$ is a conditional expectation onto $C(X, \A_n)$ by Theorem \ref{t:ah-cond-exp}  and $f^X \in C(X, \A_0) \subseteq C(X, \A_n)$ and bimodule property of conditional expectations \cite[Definition 1.5.9 and Tomiyama Theorem  1.5.10]{Brown-Ozawa}. Furthermore, note that 
\begin{equation}\label{eq:cond-exp-tensor}
E^X_{\tau,n}(a^X)(x)=E_{\tau,n}(a^X(x))=E_{\tau,n}(a)
\end{equation} for all $x \in X$. Hence, we gather that for all $x,y\in X$,
\begin{align*}
\frac{\|\pi_{X,\A}(f \otimes a)(x)-\pi_{X,\A}(f \otimes a)(y)\|_\A}{\mathsf{d}_X(x,y)}& = \frac{\|f(x)\cdot  a-f(y) \cdot a\|_\A}{\mathsf{d}_X(x,y)}\\
& = \frac{|f(x)-f(y)|\cdot\|  a\|_\A}{\mathsf{d}_X(x,y)}\\
& =  \frac{|f(x)-f(y)|}{\mathsf{d}_X(x,y)}\cdot\|  a\|_\A
\end{align*}
and thus $
l^\A_{\mathsf{d}_X}(\pi_{X,\A}(f \otimes a))=l^\C_{\mathsf{d}_X}\|a\|_\A.$

Also, we gather that for all $n \in \N$
\begin{align*}
\frac{\|\pi_{X,\A}(f \otimes a)-E^X_{\tau,n}(\pi_{X,\A}(f \otimes a))\|_{C(X, \A)}}{\beta(n)}& = \frac{\|f^Xa^X-f^XE^X_{\tau,n}(a^X)\|_{C(X, \A)}}{\beta(n)}\\ 
& = \frac{\|f^X\cdot (a^X-E^X_{\tau,n}(a^X))\|_{C(X, \A)}}{\beta(n)}.
\end{align*}
Now $(f^X(a^X-E^X_{\tau,n}(a^X)))(x)=f(x)\cdot(a-E_{\tau,n}(a))=\pi_{X,\A}(f\otimes (a-E_{\tau,n}(a)))(x)$ for all $x \in X$ by Expression \eqref{eq:cond-exp-tensor}.  Thus $\pi_{X,\A}(f\otimes (a-E_{\tau,n}(a)))=f^X(a^X-E^X_{\tau,n}(a^X)).$  Hence, since $\pi_{X, \A}$ is a *-isomorphism and $\|\cdot\|_{C(X)\otimes \A}$ is a cross norm by \cite[Lemma 3.4.10]{Brown-Ozawa}, we have
\begin{align*}
\frac{\|\pi_{X,\A}(f \otimes a)-E^X_{\tau,n}(\pi_{X,\A}(f \otimes a))\|_{C(X, \A)}}{\beta(n)}& =  \frac{\|\pi_{X,\A}(f\otimes (a-E_{\tau,n}(a)))\|_{C(X, \A)}}{\beta(n)}\\
& =  \frac{\| f\otimes (a-E_{\tau,n}(a))\|_{C(X)\otimes \A}}{\beta(n)}\\
& = \frac{\| f\|_{C(X)}\cdot \|a-E_{\tau,n}(a)\|_\A}{\beta(n)}\\
& =  \frac{\|a-E_{\tau,n}(a)\|_\A}{\beta(n)}\cdot \| f\|_{C(X)},
\end{align*}
which proves Expression \eqref{eq:leibniz-tensor}.  The remaining equalities follow immediately from Expression \eqref{eq:leibniz-tensor}   the fact that $l^\C_{\mathsf{d}_X}(1_{C(X)})=0$  and $\Lip^\beta_{\mathcal{I}, \tau}(1_\A)=0$ by definition of a compact quantum metric space.  
\end{proof}
\begin{remark}
We note another advantage to Expression \eqref{eq:leibniz-tensor} aside from being related to the classical case in Expression \eqref{eq:leibniz-tensor-comm}.  If a seminorm $\Lip$ on a tensor product $\A \otimes \B$ satisfied Expression \eqref{eq:leibniz-tensor} with Lip-norms $\Lip_\A$ on $\A$ and $\Lip_\B$ on $\B,$ then $\Lip(a \otimes b)=0 \implies a\otimes b \in \C1_{\A \otimes \B}$.  Indeed, assume $a \otimes  b \neq0 \implies a \neq 0$ and $b \neq 0$. If  $\Lip(a \otimes b)=0 $, then $\Lip_\A(a)\|b\|_\B=0 \implies \Lip_\A(a)=0 \implies a \in \C 1_\A$ and $\Lip_\B(b)\|a\|_\A=0 \implies \Lip_\B(b)=0 \implies b \in \C 1_\B$, which together imply that $a \otimes b \in \C1_{\A \otimes \B}.$  

This condition (Expression \eqref{eq:leibniz-tensor}) on elementary tensors does not guarantee that $\Lip$ vanishes only on scalars with respect to all of $\A \otimes \B$. However, this observation along with Expression \eqref{eq:leibniz-tensor-comm} still suggests that a seminorm should satisfy  something like Expression \eqref{eq:leibniz-tensor}  in order to be considered a Lip-norm on a tensor product that is compatible with the tensor structure. 
\end{remark}

\section{The diameter of the quantum metric on $C(X) \otimes \A$}
Now, in Theorem \ref{t:ah-mk},  we will show that we can still capture much of the structure of $C(X)$ in the state space of $C(X, \A)$ with $\A$ as a unital AF algebra using the Monge-Kantorovich metric. In particular, with our quantum metric, we still have an isometric copy of $X$ in the  state space of $C(X, \A)$ or $C(X) \otimes \A$ and we find that the diameter of our quantum metric  in $C(X) \otimes \A$ is bounded by the diameter of the Cartesian product of classical quantum metric on $C(X)$ and our quantum metric   on $\A$. This is motivated by and reflects some properties of  the classical *-isomorphism between $C(X) \otimes C(Y)$ and $C(X \times Y)$ given in Theorem \ref{t:comm-tensor}.  This is seen more explicitly in Corollary \ref{c:comm-tensor-diam}. To understand the consequence of Theorem \ref{t:ah-mk}, we present a classical result (phrased in terms of quantum metrics) and its proof as it provides some details for the proof of Theorem \ref{t:ah-mk}.
\begin{theorem}\label{t:comm-mk-diam}
If $(X, \mathsf{d}_X)$ is a compact metric space, then using notation from Definition \ref{d:h-cqms}, we have
\[
\mathrm{diam}\left(\StateSpace(C(X)),\Kantorovich{l^\C_{\mathsf{d}_X}}\right)= \mathrm{diam}(X,\mathsf{d}_X),
\]
and moreover, for all $x,y \in X$, we have  
\[
\mathsf{d}_X(x,y)=\Kantorovich{l^\C_{\mathsf{d}_X}}(\delta_x, \delta_y),
\]
where $\delta_x: f \in C(X)\mapsto f(x) \in \C$ and $\delta_y: f \in C(X) \mapsto f(y) \in \C$ are the Dirac point masses associated to $x,y \in X$, respectively.
\end{theorem}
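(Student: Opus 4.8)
The plan is to reduce the statement to two elementary estimates, after first recording the classical description of $\StateSpace(C(X))$. The proof would begin by recalling, via the Riesz representation theorem, that the states of $C(X)$ are exactly the integrations against regular Borel probability measures on $X$, and that the pure states of $C(X)$ are precisely the Dirac point masses $\delta_x$, $x \in X$ (this identification is the fact invoked in the proof of Theorem~\ref{t:comm-tensor}). In particular $\delta_x,\delta_y \in \StateSpace(C(X))$ for all $x,y\in X$, so once the ``moreover'' clause is established it immediately yields
$\mathrm{diam}(\StateSpace(C(X)),\Kantorovich{l^\C_{\mathsf{d}_X}}) \geq \sup_{x,y\in X}\mathsf{d}_X(x,y) = \mathrm{diam}(X,\mathsf{d}_X)$.

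Next I would prove the ``moreover'' identity. Fix $x,y\in X$. For any $a\in\sa{C(X)}$ with $l^\C_{\mathsf{d}_X}(a)\leq 1$ one has $|\delta_x(a)-\delta_y(a)| = |a(x)-a(y)| \leq l^\C_{\mathsf{d}_X}(a)\,\mathsf{d}_X(x,y) \leq \mathsf{d}_X(x,y)$ directly from the definition of $l^\C_{\mathsf{d}_X}$, hence $\Kantorovich{l^\C_{\mathsf{d}_X}}(\delta_x,\delta_y)\leq\mathsf{d}_X(x,y)$. For the reverse inequality the key step is to exhibit an optimal test function: the map $a_y\colon z\mapsto\mathsf{d}_X(z,y)$ lies in $\sa{C(X)}$ and is $1$-Lipschitz by the triangle inequality, so $l^\C_{\mathsf{d}_X}(a_y)\leq 1$, while $|\delta_x(a_y)-\delta_y(a_y)| = |\mathsf{d}_X(x,y)-0| = \mathsf{d}_X(x,y)$. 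Combining the two inequalities gives $\Kantorovich{l^\C_{\mathsf{d}_X}}(\delta_x,\delta_y)=\mathsf{d}_X(x,y)$.

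It remains to prove $\mathrm{diam}(\StateSpace(C(X)),\Kantorovich{l^\C_{\mathsf{d}_X}}) \leq \mathrm{diam}(X,\mathsf{d}_X)$, which is the only part that requires care since it concerns \emph{arbitrary} states rather than Dirac masses. Given $\varphi,\psi\in\StateSpace(C(X))$ and $a\in\sa{C(X)}$ with $l^\C_{\mathsf{d}_X}(a)\leq 1$, note that $a$ is $1$-Lipschitz hence continuous, so by compactness of $X$ it attains a maximum $M$ and a minimum $m$ with $M-m\leq\mathrm{diam}(X,\mathsf{d}_X)$. The trick is to center $a$: since $\varphi(1_{C(X)})=\psi(1_{C(X)})=1$ we have $\varphi(a)-\psi(a) = \varphi\!\left(a-\frac{M+m}{2}1_{C(X)}\right) - \psi\!\left(a-\frac{M+m}{2}1_{C(X)}\right)$, while $\left\| a-\frac{M+m}{2}1_{C(X)}\right\|_{C(X)} \leq \frac{M-m}{2} \leq \frac{1}{2}\mathrm{diam}(X,\mathsf{d}_X)$; since states are contractive this gives $|\varphi(a)-\psi(a)| \leq \mathrm{diam}(X,\mathsf{d}_X)$. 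Taking suprema over such $a$ and then over $\varphi,\psi$ finishes the proof. I do not expect a genuine obstacle --- everything here is classical --- the only delicate point being this centering step, which is exactly the Lipschitz analogue of the simultaneous control of Lipschitz constant and sup-norm (McShane-type) alluded to elsewhere in the paper.
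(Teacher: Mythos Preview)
Your proof is correct. The two arguments differ mainly in how the upper bound $\mathrm{diam}(\StateSpace(C(X)),\Kantorovich{l^\C_{\mathsf{d}_X}})\leq\mathrm{diam}(X,\mathsf{d}_X)$ is obtained. You use an elementary centering trick: for a $1$-Lipschitz $a$, shift by $\tfrac{M+m}{2}$ so that the sup-norm is at most $\tfrac12\mathrm{diam}(X,\mathsf{d}_X)$, then apply contractivity of states. The paper instead first bounds $\Kantorovich{l^\C_{\mathsf{d}_X}}$ on pure states (which are exactly the Dirac masses) and then invokes the convexity of the Monge-Kantorovich metric in Rieffel's sense together with the fact that $\StateSpace(C(X))$ is the weak* closed convex hull of its pure states and that $\Kantorovich{l^\C_{\mathsf{d}_X}}$ metrizes the weak* topology. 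Your route is more self-contained and avoids appealing to those structural facts; the paper's route, on the other hand, is the template that carries over verbatim to the noncommutative situation of Theorem~\ref{t:ah-mk}, where there is no real-valued ``max/min'' of a self-adjoint element available for centering. For the ``moreover'' clause the paper simply cites the classical result, whereas you supply the standard witness $a_y(z)=\mathsf{d}_X(z,y)$ explicitly; this is precisely the function the paper later uses (in $\A$-valued form) in the proof of Theorem~\ref{t:ah-mk}.
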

\begin{proof}
Let $\mu, \nu$ be two pure states on $C(X)$. Thus, there exists $x,y \in X$ such that $\mu=\delta_x: f \in C(X)\mapsto f(x) \in \C$ and $\nu=\delta_y: f \in C(X)\mapsto f(y) \in \C$  by  \cite[Thereom VII.8.7]{Conway90} and \cite[Theorem 5.1.6]{Murphy90}.   Now, it is classical result that for all $x,y \in X$, it holds that 
\[
\Kantorovich{l^\C_{\mathsf{d}_X}}(\delta_x,\delta_y)=\mathsf{d}_X(x,y)
\]
for which a proof is provided here \cite[Theorem 2.2.10]{Aguilar-Thesis}.  Hence, 
\[
\Kantorovich{l^\C_{\mathsf{d}_X}}(\delta_x,\delta_y)\leq \mathrm{diam}(X,\mathsf{d}_X).
\]
and thus $\Kantorovich{l^\C_{\mathsf{d}_X}}(\mu,\nu)\leq \mathrm{diam}(X,\mathsf{d}_X)$ for all pure states $\mu,\nu$ of $C(X)$ by the beginning of the proof.  Since $\Kantorovich{l^\C_{\mathsf{d}_X}}$ is  convex   in the sense of \cite[Definition 9.1]{Rieffel99} and the state space is the weak* closed convex hull of the pure states by \cite[Corollary 5.1.10]{Murphy90}, we have that 
\[
\Kantorovich{l^\C_{\mathsf{d}_X}}(\mu,\mu)\leq \mathrm{diam}(X,\mathsf{d}_X).
\] for all states $\mu,\nu$ on $C(X,\A)$ since  $\Kantorovich{l^\C_{\mathsf{d}_X}}$ metrizes the weak* topology by \cite{Kantorovich40} for which a proof in terms of quantum metrics can be found here \cite[Theorem 2.2.10]{Aguilar-Thesis}. Hence, 
\[
\mathrm{diam}\left(\StateSpace(C(X)),\Kantorovich{l^\C_{\mathsf{d}_X}}\right)\leq \mathrm{diam}(X,\mathsf{d}_X).
\]
Now, since $(X, \mathsf{d}_X)$ is compact, there exist $x_1, x_2$ such that $\mathsf{d}_X(x_1,x_2)=\mathrm{diam}(X,\mathsf{d}_X).$ Therefore, 
\begin{align*}
\mathrm{diam}(X,\mathsf{d}_X)&=\mathsf{d}_X(x_1,x_2)=\Kantorovich{l^\C_{\mathsf{d}_X}}(\delta_x,\delta_y) \leq \mathrm{diam}\left(\StateSpace(C(X)),\Kantorovich{l^\C_{\mathsf{d}_X}}\right)\\
& \leq \mathrm{diam}(X,\mathsf{d}_X),
\end{align*}
which completes the proof. 
\end{proof} 
Now, will show how our new quantum metrics extend the results of the above theorem when we tensor $C(X)$ by a unital AF algebra $\A$ equipped with a faithful tracial state, which provides another justification of our construction of Lip-norms.
\begin{theorem}\label{t:ah-mk}
Let $(X, \mathsf{d}_X)$ be a compact metric space and let $\A=\overline{\cup_{n \in \N} \A_n}^{\|\cdot\|_\A}$ be a unital C*-algebra equipped with faithful tracial state $\tau$ such that $\A_n$ is a finite-dimensional   C*-subalgebra of $\A$ for all $n \in \N$ and $\A_0=\C1_\A \subseteq \A_1 \subseteq \A_2 \subseteq \cdots $.  In particular, $\A$ is AF. Denote $\mathcal{I}=(\A_n)_{n \in \N}$. Let $(\beta(n))_{n \in \N}$ be a sequence of positive real numbers that converges to $0$. 

If $ \Lip^{\A, \beta, \otimes}_{\mathsf{d}_X, \mathcal{I}, \tau}$ is the Lip-norm on $C(X)\otimes \A$ from Theorem \ref{t:ah-c*-cqms}, then  using Notation \ref{n:prod-metric}, we have
\begin{equation}
\begin{split}\label{eq:tensor-diam}
 \mathrm{diam}\left(\StateSpace(C(X)\otimes \A),\Kantorovich{\Lip^{\A, \beta, \otimes}_{\mathsf{d}_X, \mathcal{I}, \tau}}\right) 
 & \leq \mathrm{diam}\left(\StateSpace(C(X))\times \StateSpace(\A),\mathsf{d}^1_{\Kantorovich{l^\C_{\mathsf{d}_X}}\times \Kantorovich{\Lip^{\beta}_{\mathcal{I}, \tau}}}\right)\\
&=  \mathrm{diam}\left(\StateSpace(C(X)) , \Kantorovich{l^\C_{\mathsf{d}_X}} \right)\\
& \quad + \mathrm{diam}\left(  \StateSpace(\A),  \Kantorovich{\Lip^{\beta}_{\mathcal{I}, \tau}}\right)\\
& = \mathrm{diam}(X, \mathsf{d}_X)+ \mathrm{diam}\left(  \StateSpace(\A),  \Kantorovich{\Lip^{\beta}_{\mathcal{I}, \tau}}\right)\\
& \leq \mathrm{diam}(X, \mathsf{d}_X)+2\beta(0),
\end{split}
\end{equation} 
where $\Lip^{\beta}_{\mathcal{I}, \tau}$ is from Theorem \ref{AF-lip-norms-thm-union}, and note that  $\left(\StateSpace(C(X)\otimes \A),\Kantorovich{\Lip^{\A, \beta, \otimes}_{\mathsf{d}_X, \mathcal{I}, \tau}}\right)$ can be replaced with $\left(  \StateSpace(C(X,\A)),  \Kantorovich{\Lip^{\A,\beta}_{\mathsf{d}_X,\mathcal{I}, \tau}}\right)$ in Expression \eqref{eq:tensor-diam}.

Furthermore, 
  for all states $\psi \in \StateSpace(  \A)$ (including $\tau$) and for all  $x,y \in X$, it holds that
\begin{equation}\label{eq:state-metric}
\mathsf{d}_X(x,y)=\Kantorovich{\Lip^{\A, \beta}_{\mathsf{d}_X, \mathcal{I}, \tau}}(\psi_x, \psi_y)
\end{equation}
where $\psi_x, \psi_y$ are the   states on $C(X, \A)$ defined in (3) of Theorem \ref{t:ah-cond-exp}, which induces isometries from \[(X, \mathsf{d}_X) \text{  into } \left(\StateSpace(C(X)\otimes \A),\Kantorovich{\Lip^{\A, \beta, \otimes}_{\mathsf{d}_X, \mathcal{I}, \tau}}\right)\] and from \[(X, \mathsf{d}_X) \text{  into } \left(  \StateSpace(C(X,\A)),  \Kantorovich{\Lip^{\A,\beta}_{\mathsf{d}_X,\mathcal{I}, \tau}}\right).\]
\end{theorem}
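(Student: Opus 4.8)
The plan is to transfer everything to the $C(X,\A)$ side via $\pi_{X,\A}$. Since $\Lip^{\A,\beta,\otimes}_{\mathsf{d}_X,\mathcal{I},\tau}=\Lip^{\A,\beta}_{\mathsf{d}_X,\mathcal{I},\tau}\circ\pi_{X,\A}$ and $\pi_{X,\A}$ is a $*$-isomorphism, the dual map $\pi_{X,\A}^{\ast}$ is a weak*-homeomorphism of state spaces that is an isometry from $(\StateSpace(C(X,\A)),\Kantorovich{\Lip^{\A,\beta}_{\mathsf{d}_X,\mathcal{I},\tau}})$ onto $(\StateSpace(C(X)\otimes\A),\Kantorovich{\Lip^{\A,\beta,\otimes}_{\mathsf{d}_X,\mathcal{I},\tau}})$; this gives the ``can be replaced with'' remark and lets me work on the $C(X,\A)$ side throughout. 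Next I would dispose of the routine links in \eqref{eq:tensor-diam}: the first equality is the general fact $\mathrm{diam}(A\times B,\mathsf{d}^1_{\mathsf{d}_A\times\mathsf{d}_B})=\mathrm{diam}(A,\mathsf{d}_A)+\mathrm{diam}(B,\mathsf{d}_B)$ (coordinatewise triangle inequality, plus near-extremal pairs in each factor, the suprema being attained by weak*-compactness); the second equality is Theorem \ref{t:comm-mk-diam}; and $\mathrm{diam}(\StateSpace(\A),\Kantorovich{\Lip^\beta_{\mathcal{I},\tau}})\leq 2\beta(0)$ follows because $\Lip^\beta_{\mathcal{I},\tau}(a)\leq 1$ forces $\|a-E_{\tau,0}(a)\|_\A\leq\beta(0)$ with $E_{\tau,0}(a)=r1_\A$ (as $\A_0=\C1_\A$), so $|\varphi(a)-\psi(a)|=|\varphi(a-r1_\A)-\psi(a-r1_\A)|\leq 2\beta(0)$ for all states $\varphi,\psi$.

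The only genuine content of \eqref{eq:tensor-diam} is the first inequality, $\Kantorovich{\Lip^{\A,\beta}_{\mathsf{d}_X,\mathcal{I},\tau}}(\Phi,\Psi)\leq D_X+D_\A$, where I abbreviate $D_X=\mathrm{diam}(X,\mathsf{d}_X)$ and $D_\A=\mathrm{diam}(\StateSpace(\A),\Kantorovich{\Lip^\beta_{\mathcal{I},\tau}})$. Fixing $f\in\sa{C(X,\A)}$ with $\Lip^{\A,\beta}_{\mathsf{d}_X,\mathcal{I},\tau}(f)\leq 1$, the definition forces $l^\A_{\mathsf{d}_X}(f)\leq 1$ and $\Lip^\beta_{\mathcal{I},\tau}(f(z))\leq 1$ for every $z\in X$ (since $f(z)-E_{\tau,n}(f(z))=(f-E^X_{\tau,n}(f))(z)$ by Theorem \ref{t:ah-cond-exp}). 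Because $C(X)\cdot 1_\A$ sits in the center of $C(X,\A)$, a state $\Phi$ disintegrates over $X$ as $\Phi(F)=\int_X\varphi_x(F(x))\,d\mu_\Phi(x)$, where $\mu_\Phi$ is the Radon probability measure obtained by restricting $\Phi$ to $C(X)\cdot 1_\A$ and $x\mapsto\varphi_x$ is a measurable field of states on $\A$ (each $\varphi_x$ factoring through evaluation at $x$); similarly $\Psi$ gives $(\mu_\Psi,\psi_\bullet)$. (To sidestep the measure-theoretic disintegration one may instead approximate $f$ in norm by finite sums $\sum_k p_k\,f(x_k)$ with $(p_k)$ a partition of unity subordinate to a fine cover of $X$, exactly as in Proposition \ref{p:cx-dense}, run a finite analogue of the estimate below, and let the mesh tend to $0$.)

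Now, for an arbitrary coupling $\pi$ of $\mu_\Phi$ and $\mu_\Psi$ one has $\Phi(f)-\Psi(f)=\int_{X\times X}\big(\varphi_x(f(x))-\psi_y(f(y))\big)\,d\pi(x,y)$, and the integrand splits as $\varphi_x(f(x)-f(y))+\big(\varphi_x(f(y))-\psi_y(f(y))\big)$; the first piece has modulus at most $\|f(x)-f(y)\|_\A\leq\mathsf{d}_X(x,y)$ (as $\varphi_x$ is a state and $l^\A_{\mathsf{d}_X}(f)\leq 1$), the second at most $\Kantorovich{\Lip^\beta_{\mathcal{I},\tau}}(\varphi_x,\psi_y)\leq D_\A$ (as $\Lip^\beta_{\mathcal{I},\tau}(f(y))\leq 1$). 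Hence $|\Phi(f)-\Psi(f)|\leq\int_{X\times X}\mathsf{d}_X(x,y)\,d\pi+D_\A$, and taking the infimum over couplings $\pi$ together with Kantorovich--Rubinstein duality (\cite{Kantorovich40}; see the proof of Theorem \ref{t:comm-mk-diam}) gives $|\Phi(f)-\Psi(f)|\leq\Kantorovich{l^\C_{\mathsf{d}_X}}(\mu_\Phi,\mu_\Psi)+D_\A\leq D_X+D_\A$; supremizing over $f$ closes the chain, and the last line of \eqref{eq:tensor-diam} follows from $D_\A\leq 2\beta(0)$. I expect this coupling step to be the one real obstacle: the naive additive split $\Phi(f)-\Psi(f)=(\Phi-\Psi)(E^X_{\tau,0}f)+(\Phi-\Psi)(f-E^X_{\tau,0}f)$ only yields $D_X+2\beta(0)$, since the second bracket must be bounded crudely by $|\Phi(\cdot)|+|\Psi(\cdot)|\leq 2\|f-E^X_{\tau,0}f\|_{C(X,\A)}$; routing the $C(X)$-direction through an optimal-transport distance rather than the triangle inequality is exactly what recovers the sharper bound.

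Finally, for \eqref{eq:state-metric} and the isometries, fix a state $\psi$ on $\A$ and $x,y\in X$; as in Theorem \ref{t:ah-cond-exp}(3), $\psi_x\colon F\mapsto\psi(F(x))$ is a state on $C(X,\A)$. The bound $\Kantorovich{\Lip^{\A,\beta}_{\mathsf{d}_X,\mathcal{I},\tau}}(\psi_x,\psi_y)\leq\mathsf{d}_X(x,y)$ is immediate since $\Lip^{\A,\beta}_{\mathsf{d}_X,\mathcal{I},\tau}(f)\leq 1$ gives $l^\A_{\mathsf{d}_X}(f)\leq 1$, hence $|\psi_x(f)-\psi_y(f)|=|\psi(f(x)-f(y))|\leq\|f(x)-f(y)\|_\A\leq\mathsf{d}_X(x,y)$. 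For the reverse I would test against $g\cdot 1_\A$ with $g(z)=\mathsf{d}_X(z,x)$: this is self-adjoint, satisfies $l^\C_{\mathsf{d}_X}(g)\leq 1$, and is fixed by every $E^X_{\tau,n}$, so $\Lip^{\A,\beta}_{\mathsf{d}_X,\mathcal{I},\tau}(g\cdot 1_\A)=l^\C_{\mathsf{d}_X}(g)\leq 1$, while $\psi_x(g\cdot 1_\A)-\psi_y(g\cdot 1_\A)=g(x)-g(y)=-\mathsf{d}_X(x,y)$. Equality follows, and since $x\mapsto\psi_x$ is injective and weak*-continuous it is an isometric embedding of $(X,\mathsf{d}_X)$ into the state space of $C(X,\A)$; post-composing with $\pi_{X,\A}^{\ast}$ (an isometry of the two Monge-Kantorovich spaces, by the first paragraph) yields the isometry into the state space of $C(X)\otimes\A$.
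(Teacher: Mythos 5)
Your proposal is correct, but the key step of Expression \eqref{eq:tensor-diam} is argued by a genuinely different route than the paper's. For the first inequality the paper never touches general states directly: it uses the factorization of \emph{pure} states of $C(X,\A)$ as $\phi_x$ with $\phi$ a pure state of $\A$ (citing \cite[Lemma 2.8]{Aguilar-Bice17}), runs the same two-term splitting you use (change the $\A$-state at a fixed point, then move the point), and then passes to arbitrary states via convexity of the \mongekant{} in the sense of \cite[Definition 9.1]{Rieffel99} together with the fact that $\StateSpace(C(X,\A))$ is the weak*-closed convex hull of the pure states; the bound $\mathrm{diam}\left(\StateSpace(\A),\Kantorovich{\Lip^\beta_{\mathcal{I},\tau}}\right)\leq 2\beta(0)$ is quoted from \cite[Corollary 3.10]{Aguilar-Latremoliere15} rather than reproved as you do (your direct argument is fine). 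You instead disintegrate an arbitrary state over the central copy of $C(X)\cdot 1_\A$ and run a coupling/optimal-transport estimate over $X$. This works (separability of $\A$ and $C(X)$ makes the disintegration legitimate, and your partition-of-unity fallback, quantified by $l^\A_{\mathsf{d}_X}(f)\leq 1$, genuinely sidesteps it), and it even yields the sharper intermediate estimate $\Kantorovich{\Lip^{\A,\beta}_{\mathsf{d}_X,\mathcal{I},\tau}}(\Phi,\Psi)\leq \Kantorovich{l^\C_{\mathsf{d}_X}}(\mu_\Phi,\mu_\Psi)+\mathrm{diam}\left(\StateSpace(\A),\Kantorovich{\Lip^\beta_{\mathcal{I},\tau}}\right)$, of which the diameter bound is a special case; note, though, that for the theorem as stated the transport machinery is overkill, since any single coupling and the crude bound $\mathsf{d}_X(x,y)\leq\mathrm{diam}(X,\mathsf{d}_X)$ suffice, so what you buy is a refinement at the price of a heavier measure-theoretic tool, whereas the paper's pure-state-plus-convexity route stays entirely within the cited C*-algebraic lemmas. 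Your treatment of Expression \eqref{eq:state-metric} and of the transfer between $C(X,\A)$ and $C(X)\otimes\A$ via $\pi_{X,\A}^\ast$ is essentially identical to the paper's (the paper's test function is $z\mapsto\mathsf{d}_X(y,z)1_\A$, a mirror image of yours), so no comment is needed there.
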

\begin{proof}
Expression \eqref{eq:tensor-diam}  follows some of the proof of \cite[Proposition 2.9]{Aguilar-Bice17}, but follows a different approach in the beginning to establish the relationship with the diameter on the quantum metric on the AF algebra.  Let $\mu, \nu$ be pure states on $C(X,\A)$.  By \cite[Lemma 2.8]{Aguilar-Bice17}, there exist $x,y\in X$ and pure states $\phi, \psi$ on $\A$ such that $\mu=\phi_x$ and $\nu=\psi_y$, where $\phi_x(a)=\phi(a(x))$ for all $a \in C(X, \A)$ and similarly for $\psi_y, \psi_y$.   Fix $a \in \sa{C(X, \A)}$ such that  $ \Lip^{\A, \beta}_{\mathsf{d}_X, \mathcal{I}, \tau}(a)\leq 1$.  Next, note that $a(x) \in \sa{\A}$ and   
\[
\|a(x)- E_{\tau,n}(a(x))\|_\A= \|a(x)- E^X_{\tau,n}(a)(x)\|_\A \leq \|a-E^X_{\tau,n}(a)\|_{C(X, \A)}
\]
for all $n \in \N$.  Therefore $\Lip^{\beta}_{\mathcal{I}, \tau}(a(x)) \leq 1$. Hence, we have 
\begin{equation}\label{quo-lip-eq}
\begin{split}
|\phi_x(a)-\psi_x(a) |&=|\phi(a(x))-\psi(a(x))| \leq \Kantorovich{\Lip^{\beta}_{\mathcal{I}, \tau}}(\phi,\psi) \leq  \mathrm{diam}\left(  \StateSpace(\A),  \Kantorovich{\Lip^{\beta}_{\mathcal{I}, \tau}}\right).
\end{split}
\end{equation}

 Furthermore, since $\Lip^{\A, \beta}_{\mathsf{d}_X, \mathcal{I}, \tau}(a)\leq 1$, we gather for all $n \in \N$
\begin{align*}
|\psi_x(a)-\psi_y(a) |& =|\psi(a(x))-\psi(a(y))| = |\psi(a(x)-a(y))|\leq \|a(x)-a(y)\|_\A \\
&    \leq  \mathsf{d}_X(x,y) \leq  \diam{X}{\mathsf{d}_X}. 
\end{align*}
 Combining with Expression \eqref{quo-lip-eq}, we have
\begin{align*}
|\mu(a)-\nu(a)|& = |\phi_x(a) - \psi_y(a)|  \leq |\phi_x(a) -\psi_x(a)|+|\psi_x(a)-\psi_y(a)|\\
&\leq \mathrm{diam}\left(  \StateSpace(\A),  \Kantorovich{\Lip^{\beta}_{\mathcal{I}, \tau}}\right)+ \diam{X}{\mathsf{d}_X}.
\end{align*}

Therefore, by definition, we have $\Kantorovich{\Lip^{\A, \beta}_{\mathsf{d}_X, \mathcal{I}, \tau}}(\mu,\nu) \leq  \mathrm{diam}\left(  \StateSpace(\A),  \Kantorovich{\Lip^{\beta}_{\mathcal{I}, \tau}}\right)+ \diam{X}{\mathsf{d}_X}$ for all pure states $\mu, \nu$ on $C(X, \A)$.  Next, since $\Kantorovich{\Lip^{\A, \beta}_{\mathsf{d}_X, \mathcal{I}, \tau}}$ is  convex   in the sense of \cite[Definition 9.1]{Rieffel99} and the state space is the weak* closed convex hull of the pure states by \cite[Corollary 5.1.10]{Murphy90} and $\Kantorovich{\Lip^{\A,\beta}_{\mathsf{d}_X,\mathcal{I}, \tau}}$ metrizes the weak* topology by Definition \ref{Monge-Kantorovich-def} and Theorem \ref{t:ah-c*-cqms}, we have that
\[
\mathrm{diam}\left(  \StateSpace(C(X,\A)),  \Kantorovich{\Lip^{\A,\beta}_{\mathsf{d}_X,\mathcal{I}, \tau}}\right)\leq  \mathrm{diam}\left(  \StateSpace(\A),  \Kantorovich{\Lip^{\beta}_{\mathcal{I}, \tau}}\right)+ \diam{X}{\mathsf{d}_X}
\]
  as $\mu,\nu$ were arbitrary pure states on $C(X,\A)$.   
  
  Now  by Theorem \ref{t:ah-c*-cqms},
  \[
  \mathrm{diam}\left(  \StateSpace(C(X)\otimes \A),  \Kantorovich{\Lip^{\A,\beta, \otimes}_{\mathsf{d}_X,\mathcal{I}, \tau}}\right)=\mathrm{diam}\left(  \StateSpace(C(X,\A)),  \Kantorovich{\Lip^{\A,\beta}_{\mathsf{d}_X,\mathcal{I}, \tau}}\right)
  \]
  since a full quantum isometry induces an isometry between the state spaces with their associated Monge-Kantorovich metrics by \cite[Theorem 6.2]{Rieffel00}. Next,  
  \[
  \mathrm{diam}\left(  \StateSpace(\A),  \Kantorovich{\Lip^{\beta}_{\mathcal{I}, \tau}}\right)\leq 2 \beta(0)
  \]
  by \cite[Corollary 3.10]{Aguilar-Latremoliere15}. Also, $\mathrm{diam}\left(\StateSpace(C(X)) , \Kantorovich{l^\C_{\mathsf{d}_X}} \right)  = \mathrm{diam}(X, \mathsf{d}_X)$ by Theorem \ref{t:comm-mk-diam}.  The rest follows from the   fact that the diameter of the product of compact metric spaces with the $1$-metric is the sum of the diameters of each compact metric space.

Next, we establish Expression \eqref{eq:state-metric}. This follows the proof of \cite[Theorem 3.7]{Aguilar-Bice17}, but we provide a simpler proof with a more general conclusion due the fact that in this article we only consider the C*-norm, whereas \cite{Aguilar-Bice17} considers other norms besides the C*-norm since it deals with matrix algebras. Let $x,y \in X$.  Define
\[
Y_{\mathsf{d}_X} : z \in X \mapsto \mathsf{d}_X(y,z)1_\A \in \A.
\]
Note $Y_{\mathsf{d}_X} \in \sa{C(X,\C1_\A)}=\sa{C(X,\A_0)} \subseteq \sa{C(X,\A_n)} \subseteq   \sa{C(X,\A)}$ for all $n \in \N$.  Hence, for all $n \in \N$, we have
\[
\|Y_{\mathsf{d}_X}-E^X_{\tau,n}(Y_{\mathsf{d}_X})\|_{C(X, \A)}= \|Y_{\mathsf{d}_X}- Y_{\mathsf{d}_X}\|_{C(X, \A)}=0
\]
by Theorem \ref{t:ah-cond-exp}. 
Also, for any $v,w \in X$, we have
\begin{align*}
\frac{\|Y_{\mathsf{d}_X}(v)-Y_{\mathsf{d}_X}(w)\|_\A}{\mathsf{d}_X(v,w)}& = \frac{\|\mathsf{d}_X(y,v)1_\A-\mathsf{d}_X(y,w)1_\A\|_\A}{\mathsf{d}_X(v,w)}\\
& =  \frac{|\mathsf{d}_X(y,v)-\mathsf{d}_X(y,w)|}{\mathsf{d}_X(v,w)}  \leq \frac{\mathsf{d}_X(v,w)}{\mathsf{d}_X(v,w)}=1.
\end{align*}
Hence $\Lip^{\A, \beta}_{\mathsf{d}_X, \mathcal{I}, \tau}(Y_{\mathsf{d}_X})\leq 1.$ Next, we have
\begin{align*}
|\psi_x(Y_{\mathsf{d}_X})-\psi_y(Y_{\mathsf{d}_X})|& = |\psi(Y_{\mathsf{d}_X}(x))-\psi(Y_{\mathsf{d}_X}(y))|\\
& = |\psi( \mathsf{d}_X(y,x)1_\A)-\psi( \mathsf{d}_X(y,y)1_\A)| = |\psi( \mathsf{d}_X(y,x)1_\A)|&\\
  = \mathsf{d}_X(y,x).
\end{align*}
Therefore
\[
\Kantorovich{\Lip^{\A, \beta}_{\mathsf{d}_X, \mathcal{I}, \tau}}(\psi_x, \psi_y)\geq \mathsf{d}_X(x,y).
\]
Now, let $a \in \sa{C(X, \A)}$ such that  $\Lip^{\A, \beta}_{\mathsf{d}_X, \mathcal{I}, \tau}(a)\leq 1.$ Then
\begin{align*}
|\psi_x(a)-\psi_y(a)|& = |\psi(a(x))-\psi(a(y))|  = |\psi( a(x)-a(y))| \\
&\leq \|a(x)-a(y)\|_\A  \leq \mathsf{d}_X(y,x).
\end{align*}
Hence
\[
\Kantorovich{\Lip^{\A, \beta}_{\mathsf{d}_X, \mathcal{I}, \tau}}(\psi_x, \psi_y)\leq \mathsf{d}_X(x,y),
\]
which implies that $\Kantorovich{\Lip^{\A, \beta}_{\mathsf{d}_X, \mathcal{I}, \tau}}(\psi_x, \psi_y)=\mathsf{d}_X(x,y)$.

Thus, the map
\[
\Psi: x \in (X, \mathsf{d}_X) \mapsto \psi_x \in \left(  \StateSpace(C(X,\A)),  \Kantorovich{\Lip^{\A,\beta}_{\mathsf{d}_X,\mathcal{I}, \tau}}\right)
\]
is an isometry into $\left(  \StateSpace(C(X,\A)),  \Kantorovich{\Lip^{\A,\beta}_{\mathsf{d}_X,\mathcal{I}, \tau}}\right)$. Since  $\left(  \StateSpace(C(X)\otimes \A),  \Kantorovich{\Lip^{\A,\beta, \otimes}_{\mathsf{d}_X,\mathcal{I}, \tau}}\right)$ is isometric onto  $\left(  \StateSpace(C(X,\A)),  \Kantorovich{\Lip^{\A,\beta}_{\mathsf{d}_X,\mathcal{I}, \tau}}\right)$ by \cite[Theorem 6.2]{Rieffel00} as mentioned above, the proof is complete.
\end{proof}

Now, we show what the above result translates to in the commutative case of AF algebras, which displays a satisfying relationship between the quantum metric and the classical metric structure. 

\begin{corollary}\label{c:comm-tensor-diam}
Let $(X, \mathsf{d}_X)$ be a compact metric space and let $(Y, \mathsf{d}_Y)$ be a totally disconnected compact metric space that contains more than one point (for instance, the Cantor space $\mathcal{C}$), and thus $\mathrm{diam}(Y, \mathsf{d}_Y)>0$. Since $C(Y)$ is AF by \cite[Proposition 3.1]{Bratteli74}, let $\mathcal{I}=(\A_n)_{n \in \N}$ be a non-decreasing sequence of finite-dimensional C*-subalgebras of $C(Y)$ such that $\A_0=\C1_{C(Y)}$. Let $(\beta(n))_{n \in \N}$ be a sequence of positive real numbers converging to $0$.

If we set $\beta(0)=\frac{\mathrm{diam}(Y, \mathsf{d}_Y)}{2}$, then using the *-isomorphism $c_T: C(X)\otimes C(Y)\rightarrow C(X\times Y)$ of Theorem \ref{t:comm-tensor}, then using Notation \ref{n:prod-metric}
\begin{align*}
\mathrm{diam}\left( \StateSpace(C(X \times Y)), \Kantorovich{\Lip^{\A,\beta, \otimes}_{\mathsf{d}_X, \mathcal{I},\tau}\circ c_T}\right)& \leq \mathrm{diam}\left( X \times Y, \mathsf{d}^1_{\mathsf{d}_X \times \mathsf{d}_Y}\right)\\
& = \mathrm{diam}\left( \StateSpace(C(X \times Y)), \Kantorovich{l^\C_{\mathsf{d}^1_{\mathsf{d}_X \times \mathsf{d}_Y}}}\right).
\end{align*}
We note that in the case that $Y$ is a one point, we have equality in the above, which is immediate by (4) of Theorem \ref{t:ah-c*-cqms} since $C(Y)\cong \C$ in this case.
\end{corollary}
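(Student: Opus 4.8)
The plan is to read Corollary~\ref{c:comm-tensor-diam} as the specialization of Theorem~\ref{t:ah-mk} to the case $\A = C(Y)$, combined with Theorem~\ref{t:comm-mk-diam} applied to the single compact metric space $\left(X\times Y,\mathsf{d}^1_{\mathsf{d}_X\times\mathsf{d}_Y}\right)$ and with the identification $C(X\times Y)\cong C(X)\otimes C(Y)$ of Theorem~\ref{t:comm-tensor}. First I would verify that the hypotheses of Theorem~\ref{t:ah-mk} hold with $\A=C(Y)$: the algebra $C(Y)$ is commutative and unital, so every state on it is tracial; since $Y$ is a compact metric space it is second countable, hence $C(Y)$ is separable and admits a faithful state $\tau$ (for instance integration against the probability measure $\sum_{n}2^{-n}\delta_{y_n}$ for a countable dense set $\{y_n\}\subseteq Y$), and such a $\tau$ is then a faithful tracial state. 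Together with the increasing sequence $\mathcal{I}=(\A_n)_{n\in\N}$ of finite-dimensional subalgebras with $\A_0=\C1_{C(Y)}$ supplied by \cite[Proposition~3.1]{Bratteli74} and the null sequence $(\beta(n))_{n\in\N}$ normalized so that $\beta(0)=\mathrm{diam}(Y,\mathsf{d}_Y)/2$, we land exactly in the setting of Theorem~\ref{t:ah-mk}.

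Next I would apply Theorem~\ref{t:ah-mk} verbatim. It yields
\[
\mathrm{diam}\left(\StateSpace(C(X)\otimes C(Y)),\Kantorovich{\Lip^{\A,\beta,\otimes}_{\mathsf{d}_X,\mathcal{I},\tau}}\right)\leq\mathrm{diam}(X,\mathsf{d}_X)+2\beta(0)=\mathrm{diam}(X,\mathsf{d}_X)+\mathrm{diam}(Y,\mathsf{d}_Y),
\]
and the right-hand side equals $\mathrm{diam}\left(X\times Y,\mathsf{d}^1_{\mathsf{d}_X\times\mathsf{d}_Y}\right)$ because the diameter of a Cartesian product of compact metric spaces under the $1$-metric is the sum of the diameters of the factors---the same elementary fact already invoked at the end of the proof of Theorem~\ref{t:ah-mk}. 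I would then transport this estimate along $c_T$: since $c_T\colon C(X)\otimes C(Y)\to C(X\times Y)$ is a $*$-isomorphism, it is a full quantum isometry carrying $\Lip^{\A,\beta,\otimes}_{\mathsf{d}_X,\mathcal{I},\tau}$ to the seminorm on $C(X\times Y)$ denoted $\Lip^{\A,\beta,\otimes}_{\mathsf{d}_X,\mathcal{I},\tau}\circ c_T$ in the statement, so by \cite[Theorem~6.2]{Rieffel00}---used in exactly this way inside the proof of Theorem~\ref{t:ah-mk}---the induced map on state spaces is an isometry for the associated Monge-Kantorovich metrics, whence the two diameters coincide. This gives the inequality line of the corollary. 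The final equality is then immediate from Theorem~\ref{t:comm-mk-diam} applied to $\left(X\times Y,\mathsf{d}^1_{\mathsf{d}_X\times\mathsf{d}_Y}\right)$, which identifies $\mathrm{diam}\left(\StateSpace(C(X\times Y)),\Kantorovich{l^\C_{\mathsf{d}^1_{\mathsf{d}_X\times\mathsf{d}_Y}}}\right)$ with $\mathrm{diam}\left(X\times Y,\mathsf{d}^1_{\mathsf{d}_X\times\mathsf{d}_Y}\right)$. For the closing remark, when $Y$ is a single point one has $\mathrm{diam}(Y,\mathsf{d}_Y)=0$ and $C(Y)\cong\C$, so part~(4) of Theorem~\ref{t:ah-c*-cqms} identifies the transported Lip-norm with $l^\C_{\mathsf{d}_X}$ and the inequality collapses to the equality of Theorem~\ref{t:comm-mk-diam}.

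I do not expect any real obstacle here: all the analytic content is already carried by Theorems~\ref{t:ah-mk}, \ref{t:comm-mk-diam} and \ref{t:comm-tensor}, and the corollary is essentially bookkeeping. The two points that merit a careful sentence each are (i) checking that $C(Y)$ admits a faithful tracial state so that Theorem~\ref{t:ah-mk} literally applies, and (ii) matching the symbol $\Lip^{\A,\beta,\otimes}_{\mathsf{d}_X,\mathcal{I},\tau}\circ c_T$ with the pushforward of the Lip-norm under the $*$-isomorphism $c_T$ and recalling that full quantum isometries preserve the Monge-Kantorovich diameter (via \cite[Theorem~6.2]{Rieffel00}). Neither is substantive.
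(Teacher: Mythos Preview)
Your proposal is correct and follows essentially the same route as the paper's proof: apply Theorem~\ref{t:ah-mk} with $\A=C(Y)$ and $\beta(0)=\mathrm{diam}(Y,\mathsf{d}_Y)/2$, identify the resulting bound with $\mathrm{diam}\left(X\times Y,\mathsf{d}^1_{\mathsf{d}_X\times\mathsf{d}_Y}\right)$, transport along $c_T$ via \cite[Theorem~6.2]{Rieffel00}, and close with Theorem~\ref{t:comm-mk-diam}. Your explicit verification that $C(Y)$ admits a faithful tracial state is a useful addition that the paper leaves implicit.
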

\begin{proof}
Since $c_T$  is a *-isomorphism,  $\left(C(X \times Y), \Lip^{\A,\beta, \otimes}_{\mathsf{d}_X, \mathcal{I},\tau}\circ c_T \right)$ is a $2$-quasi-Leibniz compact quantum metric space. 
By construction, $c_T$ is a full quantum isometry from $\left( C(X) \otimes C(Y), \Lip^{\A,\beta, \otimes}_{\mathsf{d}_X, \mathcal{I},\tau}\right)$ onto $\left(C(X \times Y), \Lip^{\A,\beta, \otimes}_{\mathsf{d}_X, \mathcal{I},\tau}\circ c_T \right) $.  Hence 
\[\mathrm{diam}\left( \StateSpace(C(X \times Y)), \Kantorovich{\Lip^{\A,\beta, \otimes}_{\mathsf{d}_X, \mathcal{I},\tau}\circ c_T}\right)=\mathrm{diam}\left( \StateSpace(C(X) \otimes C(Y)), \Kantorovich{\Lip^{\A,\beta, \otimes}_{\mathsf{d}_X, \mathcal{I},\tau} }\right)\]
since a full quantum isometry induces an isometry between the state spaces with their associated Monge-Kantorovich metrics by \cite[Theorem 6.2]{Rieffel00}.

Therefore, by  Theorem \ref{t:ah-mk}, we have that
\begin{align*}
\mathrm{diam}\left( \StateSpace(C(X \times Y)), \Kantorovich{\Lip^{\A,\beta, \otimes}_{\mathsf{d}_X, \mathcal{I},\tau}\circ c_T}\right)&\leq \mathrm{diam}\left(X, \mathsf{d}_X\right)+2\beta(0)\\
& = \mathrm{diam}\left(X, \mathsf{d}_X\right)+\mathrm{diam}\left(Y, \mathsf{d}_Y\right)\\
& = \mathrm{diam}\left( X \times Y, \mathsf{d}^1_{\mathsf{d}_X\times \mathsf{d}_Y}\right)
\end{align*}
since the diameter of the product of compact metric spaces with respect to the $1$-metric is equal to the the sum of the diameters.  The final equality is provided by Theorem \ref{t:comm-mk-diam}.
\end{proof}
Thus, although the above result may not produce equality outside the classical case like in Theorem \ref{t:comm-mk-diam}, we still achieve an upper bound using the classical metric structure. Furthermore, given the Cantor space $\mathcal{C}$, we see that Corollary \ref{c:comm-tensor-diam} places two quantum metrics on $C(X \times \mathcal{C})$ given a compact metric space $(X, \mathsf{d}_X)$, where one quantum metric comes from the AF structure of $C(\mathcal{C})$ and the other comes from the metric structure of $\mathcal{C}$. It would be interesting to see how these two quantum metrics compare. Indeed, this is motivated our results with \Latremoliere{} in \cite{Aguilar-Latremoliere15} and L\'opez in \cite{Aguilar-Lopez19} where we compared  certain  quantum metrics on $C(\mathcal{C})$, in which these quantum metrics on $C(\mathcal{C})$  can be given by the case $C(\{x\}\times \mathcal{C})$ of Corollary \ref{c:comm-tensor-diam}. Hence, a study of $C(X \times \mathcal{C})$ should extend our results with \Latremoliere{} in \cite{Aguilar-Latremoliere15} and L\'opez in \cite{Aguilar-Lopez19} in a satisfying manner.

\section{Continuous families  of $C(X) \otimes \A$}

 In \cite{Aguilar-Latremoliere15}, we showed that UHF algebras  of Glimm \cite{Glimm60} vary continuously in Gromov-Hausdorff propinquity with respect to their multiplicity sequences in the Baire metric space. Now, we will show that this convergence result is not disrupted by tensoring UHF algebras by $C(X)$. We will now define the  Baire metric space, which a classical space that is vital to the study of Descriptive Set Theory, and is often called the {\em irrationals} since it is homeomorphic to the irrationals in $(0,1)$  \cite{Miller95}. For our purposes, it provides the ideal domain for our continuity results. Here is the definition of the Baire metric space.
 
 \begin{definition}[{\cite{Miller95}}]\label{d:Baire}
 Let $\mathcal{N}=(\N\setminus \{0\})^\N$. For each $x=(x(n))_{n \in \N}, y=( y(n))_{n \in \N}$ set
\[
\mathsf{d}_\mathcal{N}(x,y)=\begin{cases}
0&: \ x=y\\
2^{-\min\{m \in \N : x(m)\neq y(m)\}} & : \text{ otherwise.}
\end{cases}
\]
The metric space $(\mathcal{N}, \mathsf{d}_\mathcal{N})$ is called the {\em Baire space}.
 \end{definition}
 Since we are dealing with more structure than just the UHF algebra itself, we need to carefully define what we mean by UHF algebras since our Lip-norms required particular inductive sequences to be able to be defined. We note that we consider the entire class of UHF algebras up to *-isomorphism in the following notation by \cite{Bratteli72} since we capture all Bratteli diagrams of UHF algebras.
 \begin{notation}\label{n:uhf}
 Let $\beta \in \mathcal{N}$, we define the sequence $\boxtimes \beta \in \mathcal{N}$ by 
 \[
  \boxtimes \beta(n)=
 \begin{cases}
1& \text{: if } n=0\\
 \prod_{j=0}^{n-1}(\beta(j)+1)& \text{: otherwise.} 
 \end{cases}
 \]
 For each $n \in \N$, let $\alpha_{\beta,n}: M_{\boxtimes \beta(n)}(\C) \rightarrow M_{\boxtimes \beta(n+1)}(\C) $ be the unital *-monomorphism given for all $a \in M_{\boxtimes \beta(n)}(\C) $ by 
 \[
 \alpha_{\beta,n}( a)= \left( \begin{array}{lll}
 a & & 0\\
 & \ddots & \\
 0 & & a
 \end{array}\right), 
 \]
 where there are $\beta(n)+1$ copies of $a$ on the diagonal. Set $\boxtimes(\beta)^{-1}=(\boxtimes(\beta)^{-1})_{n \in \N}$.  Denote the inductive limit 
\[\alg{uhf}(\beta)=\underrightarrow{\lim}(M_{\boxtimes \beta(n)}(\C), \alpha_{\beta,n})_{n\in \N}\] of \cite[Section 6.1]{Murphy90}, and set 
 \[
 \mathcal{I}(\beta)=\left(\alpha^{(n)}_\beta(M_{\boxtimes \beta(n)}(\C))\right)_{n \in \N},
 \] 
 where for each $n \in \N$, $\alpha^{(n)}_\beta: M_{\boxtimes \beta(n)}(\C)\rightarrow\alg{uhf}(\beta)$ is the canonical unital *-monomorphism of \cite[Section 6.1]{Murphy90} such that $\alpha^{(n+1)}_\beta\circ \alpha_{\beta,n}=\alpha^{(n)}_\beta$, where the subalgebra  $\cup_{n \in \N}\alpha^{(n)}_\beta(M_{\boxtimes \beta(n)}(\C))$ is dense in $\alg{uhf}(\beta)$ and $\alpha^{(0)}_\beta(M_{\boxtimes \beta(0)}(\C))=\C1_{\alg{uhf}(\beta)}$.
 
 Let $\tau(\beta)$ denote the unique faithful tracial state on $\alg{uhf}(\beta)$ given by \cite[Example 6.2.1 and Remark 6.2.4]{Murphy90}.
 \end{notation}
 
 Now, we are ready to establish our main convergence result.
 
 \begin{theorem}\label{t:uhf-tensor-cont}
 Let $QCQMS_2$ denote the class of $2$-quasi-Leibniz compact quantum metric spaces.  Let $(X, \mathsf{d}_X)$ be a compact metric space.
 
 Using notation from Notation \ref{n:uhf} and  Theorem \ref{t:ah-c*-cqms},  the map
 \[
 \alg{uhf}^\otimes : \beta \in (\mathcal{N}, \mathsf{d}_\mathcal{N}) \mapsto \left(C(X)\otimes \alg{uhf}(\beta),\Lip^{\alg{uhf}(\beta),\boxtimes(\beta)^{-1} , \otimes}_{\mathsf{d}_X, \mathcal{I}(\beta),\tau(\beta)}\right) \in ( QCQMS_2, \qpropinquity{})
 \]
 is $2$-Lipschitz and thus continuous.
 
  The result is the same with the   space $\left(C(X)\otimes \alg{uhf}(\beta),\Lip^{\alg{uhf}(\beta),\boxtimes(\beta)^{-1} , \otimes}_{\mathsf{d}_X, \mathcal{I}(\beta),\tau(\beta)}\right) $ replaced with $\left(C(X,\alg{uhf}(\beta)),\Lip^{\alg{uhf}(\beta),\boxtimes(\beta)^{-1} }_{\mathsf{d}_X, \mathcal{I}(\beta),\tau(\beta)}\right) $
 \end{theorem}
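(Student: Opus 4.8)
The plan is to reduce the Lipschitz bound to three ingredients, all already available: the triangle inequality for $\qpropinquity{}$ (a genuine metric by Theorem~\ref{t:distq}); the finite-dimensional approximation rate of part~(3) of Theorem~\ref{t:ah-c*-cqms} in its tensor form; and the elementary fact that two UHF algebras whose multiplicity sequences agree on an initial segment carry the same truncated quantum-metric data.

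First I would fix $\alpha\neq\beta$ in $\mathcal{N}$ and set $N=\min\{m:\alpha(m)\neq\beta(m)\}$, so that $\mathsf{d}_\mathcal{N}(\alpha,\beta)=2^{-N}$ (the case $\alpha=\beta$ being trivial). Writing $\A_n^\gamma$ for the $n$-th term of $\mathcal{I}(\gamma)$, and noting that $\boxtimes\gamma(n)=\prod_{j=0}^{n-1}(\gamma(j)+1)$ depends only on $\gamma(0),\dots,\gamma(n-1)$, we obtain $\boxtimes\alpha(n)=\boxtimes\beta(n)$ for all $n\le N$, while the connecting maps of Notation~\ref{n:uhf} for $\alpha$ and for $\beta$ agree for $n\le N-1$. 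Hence the finite chains $\A_0^\alpha\subseteq\cdots\subseteq\A_N^\alpha$ in $\alg{uhf}(\alpha)$ and $\A_0^\beta\subseteq\cdots\subseteq\A_N^\beta$ in $\alg{uhf}(\beta)$ are isomorphic as finite inductive systems; let $\Phi_N\colon\A_N^\alpha\to\A_N^\beta$ be the resulting $*$-isomorphism (both algebras equalling $M_{\boxtimes\alpha(N)}(\C)$), which restricts to $*$-isomorphisms $\A_k^\alpha\to\A_k^\beta$ for every $k\le N$.

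Next I would verify that $\mathrm{id}_{C(X)}\otimes\Phi_N$ is a full quantum isometry for the truncated Lip-norms. By Expression~\eqref{eq:af-h}, the Lip-norm restricted to the $N$-th truncation is built only from $l^{\A_N}_{\mathsf{d}_X}$, from the scalars $\boxtimes(\gamma)^{-1}(k)$ with $k\le N$ --- which agree for $\alpha$ and $\beta$ --- and from the conditional expectations $E^X_{\tau,k}$ with $k\le N$, which act pointwise via the $\tau$-preserving conditional expectations $E_{\tau,k}\colon\A_N\to\A_k$ of Theorem~\ref{t:ah-cond-exp}. These are uniquely determined by the inclusions $\A_k\subseteq\A_N$ and the tracial state on $\A_N$; since $\A_N$ is a full matrix algebra, its tracial state is unique, so $\tau(\alpha)$ and $\tau(\beta)$ are intertwined by $\Phi_N$ on the truncations, and hence so are the $E_{\tau,k}$ and, pointwise, the $E^X_{\tau,k}$. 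Thus $\Phi_N$ transports one truncated Lip-norm onto the other, and Theorem~\ref{t:distq} gives
\[
\qpropinquity{}\Bigl(\bigl(C(X)\otimes\A_N^\alpha,\,\Lip^\alpha\bigr),\ \bigl(C(X)\otimes\A_N^\beta,\,\Lip^\beta\bigr)\Bigr)=0,
\]
where $\Lip^\gamma$ abbreviates $\Lip^{\alg{uhf}(\gamma),\boxtimes(\gamma)^{-1},\otimes}_{\mathsf{d}_X,\mathcal{I}(\gamma),\tau(\gamma)}$.

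Finally I would assemble the estimate. Part~(3) of Theorem~\ref{t:ah-c*-cqms} (tensor form), applied to $\alg{uhf}(\alpha)$ with approximating sequence $\boxtimes(\alpha)^{-1}$, bounds $\qpropinquity{}$ between $C(X)\otimes\alg{uhf}(\alpha)$ and its $N$-th truncation by $\boxtimes(\alpha)^{-1}(N)=1/\boxtimes\alpha(N)$, and likewise for $\beta$ (recall $\boxtimes\beta(N)=\boxtimes\alpha(N)$). Combining this with the vanishing distance between the truncations and the triangle inequality yields
\[
\qpropinquity{}\bigl(\alg{uhf}^\otimes(\alpha),\,\alg{uhf}^\otimes(\beta)\bigr)\ \le\ \frac{2}{\boxtimes\alpha(N)}.
\]
Since $\alpha(j)\ge1$ for all $j$, we have $\boxtimes\alpha(N)=\prod_{j=0}^{N-1}(\alpha(j)+1)\ge 2^N$, so the right-hand side is at most $2\cdot2^{-N}=2\,\mathsf{d}_\mathcal{N}(\alpha,\beta)$; this shows $\alg{uhf}^\otimes$ is $2$-Lipschitz, hence continuous. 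The statement for the presentation $\bigl(C(X,\alg{uhf}(\beta)),\Lip^{\alg{uhf}(\beta),\boxtimes(\beta)^{-1}}_{\mathsf{d}_X,\mathcal{I}(\beta),\tau(\beta)}\bigr)$ follows verbatim, since $\pi_{X,\alg{uhf}(\beta)}$ is a full quantum isometry between the two presentations by Theorem~\ref{t:ah-c*-cqms}. The one genuinely delicate step is the finite-level full-quantum-isometry claim; the rest is the triangle inequality together with the already-established approximation rate.
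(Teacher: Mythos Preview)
Your proposal is correct and follows essentially the same approach as the paper: both proofs use the triangle inequality via the $N$-th truncations, bound the outer distances by $\boxtimes\gamma(N)^{-1}\le 2^{-N}$ using part~(3) of Theorem~\ref{t:ah-c*-cqms}, and show the truncations are fully quantum isometric by exploiting the uniqueness of the trace on the full matrix algebra $M_{\boxtimes\alpha(N)}(\C)$ to intertwine the conditional expectations. The only cosmetic difference is that you argue in the tensor picture via $\mathrm{id}_{C(X)}\otimes\Phi_N$ while the paper works in the $C(X,\cdot)$ picture via the pointwise-induced map $F^X$ and cites \cite[Expression~(4.4)]{Aguilar-Latremoliere15} for the explicit intertwining computation.
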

 \begin{proof}
  The map is well-defined by Theorem \ref{t:ah-c*-cqms}, so let $\eta, \beta \in \mathcal{N}$ such that $\eta\neq\beta$.  Note that $\boxtimes \eta(n)\geq 2^n$ and $\boxtimes \beta(n)\geq 2^n$ for all $n \in \N$. Thus, there exists $N \in \N$ such that $\mathsf{d}_\mathcal{N}(\eta, \beta)=2^{-N}$, and thus the first coordinate $\eta$ and $\beta$ disagree is $N$, and thus $\boxtimes\beta(n)=\boxtimes\eta(n)$ for all $n \leq N$. Our estimate will be the same for $N=0$ and $N=1$ since each inductive sequence $\mathcal{I}(\eta)$ and $\mathcal{I}(\beta)$ begins with the scalars. So, we assume $N \geq 1$. 
  
  Next, by Theorem \ref{t:ah-c*-cqms}, we have
  \begin{align*}
   \qpropinquity{}\left( \alg{uhf}^\otimes(\eta), 
   \left(C(X)\otimes \alpha^{(N)}_\eta(M_{\boxtimes \eta(N)}(\C)),\Lip^{\alg{uhf}(\eta),\boxtimes(\eta)^{-1} , \otimes}_{\mathsf{d}_X, \mathcal{I}(\eta),\tau(\eta)}\right) \right) 
  &  \leq \boxtimes(\eta)(N)^{-1}\\
  &\leq 2^{-N}= \mathsf{d}_{\mathcal{N}}(\eta, \beta)
  \end{align*}
  and 
   \begin{align*}
   \qpropinquity{}\left( \alg{uhf}^\otimes(\beta),   \left(C(X)\otimes \alpha^{(N)}_\beta(M_{\boxtimes \beta(N)}(\C)),\Lip^{\alg{uhf}(\beta),\boxtimes(\beta)^{-1} , \otimes}_{\mathsf{d}_X, \mathcal{I}(\beta),\tau(\beta)}\right) \right) 
 &  \leq \boxtimes(\beta)(N)^{-1}\\
 &\leq 2^{-N}= \mathsf{d}_{\mathcal{N}}(\eta, \beta).
  \end{align*}
  
  Next, we show  that  \[\left(C(X)\otimes \alpha^{(N)}_\eta(M_{\boxtimes \eta(N)}(\C)),\Lip^{\alg{uhf}(\eta),\boxtimes(\eta)^{-1} , \otimes}_{\mathsf{d}_X, \mathcal{I}(\eta),\tau(\eta)}\right) \] and \[\left(C(X)\otimes \alpha^{(N)}_\beta(M_{\boxtimes \beta(N)}(\C)),\Lip^{\alg{uhf}(\beta),\boxtimes(\beta)^{-1} , \otimes}_{\mathsf{d}_X, \mathcal{I}(\beta),\tau(\beta)}\right)\] are fully quantum isometric.
  
   Set $N'=\boxtimes\beta(N)=\boxtimes\eta(N)$. We will show that \[\left(C(X, \alpha^{(N)}_\eta(M_{N'}(\C))),\Lip^{\alg{uhf}(\eta),\boxtimes(\eta)^{-1}}_{\mathsf{d}_X, \mathcal{I}(\eta),\tau(\eta)}\right) \] and \[\left(C(X, \alpha^{(N)}_\beta(M_{N'}(\C))),\Lip^{\alg{uhf}(\beta),\boxtimes(\beta)^{-1} }_{\mathsf{d}_X, \mathcal{I}(\beta),\tau(\beta)}\right)\] are fully quantum isometric, which is equivalent to the previous statement  by   Theorem \ref{t:ah-c*-cqms}.  Consider the *-isomorphism 
   \[ F=(\alpha^{(N)}_\eta)\circ (\alpha^{(N)}_\beta)^{-1}: \alpha^{(N)}_\beta(M_{N'}(\C))\rightarrow \alpha^{(N)}_\eta(M_{N'}(\C)).\]
   Now, define
   \[
   F^X: a \in C(X, \alpha^{(N)}_\beta(M_{N'}(\C))) \longmapsto (x \mapsto F(a(x))) \in C(X, \alpha^{(N)}_\eta(M_{N'}(\C))),
   \]
   which is a *-isomorphism by construction. We will show that $F^X$ is a full quantum isometry.

   Let $a \in C(X, \alpha^{(N)}_\beta(M_{N'}(\C)))$.  Thus, for each $x \in X$ there exists a unique $a_x \in M_{N'}(\C) $ such that $a(x)=\alpha^{(N)}_\beta(a_x)$.  Now, let $x,y \in X$, we have
   \begin{align*}
   \|F^X(a)(x)-F^X(a)(y)\|_{\alg{uhf}(\eta)}& = \|F(a(x))-F(a(y))\|_{\alg{uhf}(\eta)}\\
   & = \|F(\alpha^{(N)}_\beta(a_x))-F(\alpha^{(N)}_\beta(a_y))\|_{\alg{uhf}(\eta)}\\
   & = \| \alpha^{(N)}_\eta(a_x)- \alpha^{(N)}_\eta(a_y)\|_{\alg{uhf}(\eta)}\\ 
   & = \|a_x-a_y\|_{M_{N'}(\C)} \\
   &= \|\alpha^{(N)}_\beta(a_x)-\alpha^{(N)}_\beta(a_y)\|_{M_{N'}(\C)}  = \|a(x)-a(y)\|_{\alg{uhf}(\beta)}.
   \end{align*}
   Thus, $l^{\alg{uhf}(\beta)}_{\mathsf{d}_X}(a)=l^{\alg{uhf}(\eta)}_{\mathsf{d}_X}(F^X(a)).$ 
   
   Next, fix $x \in X$. Then let $n \in \{0, \ldots, N-1\}$, we have
  \begin{align*}
  &\|F^X(a)(x)-E^X_{\tau(\eta),n}(F^X(a))(x)\|_{\alg{uhf}(\eta)}\\
  & =   \|F^X(a)(x)-E_{\tau(\eta),n}(F^X(a)(x))\|_{\alg{uhf}(\eta)}\\
  & =  \|F(\alpha^{(N)}_\beta(a_x))-E_{\tau(\eta),n}(F(\alpha^{(N)}_\beta(a_x)))\|_{\alg{uhf}(\eta)}\\
  & =  \| \alpha^{(N)}_\eta(a_x)-E_{\tau(\eta),n}(\alpha^{(N)}_\eta(a_x))\|_{\alg{uhf}(\eta)}\\
  & =  \| \alpha^{(N)}_\beta(a_x)-E_{\tau(\beta),n}(\alpha^{(N)}_\beta(a_x))\|_{\alg{uhf}(\beta)} =  \| a(x)-E_{\tau(\beta),n}(a(x))\|_{\alg{uhf}(\beta)},
  \end{align*}
  where the second to the last equality is provided by \cite[Expression (4.4)]{Aguilar-Latremoliere15} and is due to the uniqueness of faithful tracial state on matrix algebras and the fact that the conditional expectation is the orthogonal projection onto  matrix algebra constructed by the inner product induced by the faithful tracial state. Hence, taking supremums over $x \in X$, we have
  \[
  \frac{\|F^X(a)-E^X_{\tau(\eta),n}(F^X(a))\|_{C(X, \alg{uhf}(\eta))}}{\boxtimes \eta(n)}=\frac{\| a-E^X_{\tau(\beta),n}( a)\|_{C(X, \alg{uhf}(\beta))}}{\boxtimes \beta(n)}
  \]
  since $\boxtimes \beta(n)=\boxtimes \eta(n)$ for all $n \leq N$. Therefore, we have that 
  \[
\Lip^{\alg{uhf}(\eta),\boxtimes(\eta)^{-1} }_{\mathsf{d}_X, \mathcal{I}(\eta),\tau(\eta)}\circ F^X(a)=  \Lip^{\alg{uhf}(\beta),\boxtimes(\beta)^{-1} }_{\mathsf{d}_X, \mathcal{I}(\beta),\tau(\beta)}(a)
  \]
for all  $a \in C(X, \alpha^{(N)}_\beta(M_{N'}(\C)))$. Thus $F^X$ is a full quantum isometry and therefore
\begin{align*}
& \qpropinquity{}\bigg( \left(C(X, \alpha^{(N)}_\eta(M_{N'}(\C))),\Lip^{\alg{uhf}(\eta),\boxtimes(\eta)^{-1}}_{\mathsf{d}_X, \mathcal{I}(\eta),\tau(\eta)}\right), \\
& \quad \quad \quad \left(C(X, \alpha^{(N)}_\beta(M_{N'}(\C))),\Lip^{\alg{uhf}(\beta),\boxtimes(\beta)^{-1} }_{\mathsf{d}_X, \mathcal{I}(\beta),\tau(\beta)}\right)\bigg)=0
\end{align*}
by Theorem \ref{t:distq}.

Combining this with the beginning of the proof and the triangle inequality, we have
\begin{align*}
 \qpropinquity{}\left(\alg{uhf}^\otimes(\eta) ,  
\alg{uhf}^\otimes(\beta)\right) 
& \leq \mathsf{d}(\beta, \eta)+0+\mathsf{d}(\beta, \eta)=2\mathsf{d}(\beta, \eta),
\end{align*}
which shows $2$-Lipschitz and thus continuity.  The last statement is provided by the full quantum isometry between the quantum metric spaces from the proof of Theorem \ref{t:ah-c*-cqms}. 
 \end{proof}
 
Lastly, we show that we can approximate $C(X, \A)$ with finite-dimensional C*-algebras in propinquity even though these spaces need not be AF. We accomplish by using finite-dimensional approximations for $C(X)$ in propinquity and by varying the Lip-norm on $ \A$, so we achieve some form of convergence on a product.  We also provide estimates between $C(X, \A)$ and $C(Y, \A)$ depending on the metric geometry of $X$ and $Y$  and the quantum metric on $\A$.
\begin{theorem}\label{t:fd-approx}
Let $(X, \mathsf{d}_X)$ and $(Y, \mathsf{d}_Y)$ be compact metric spaces. Let $\A=\overline{\cup_{n \in \N} \A_n}^{\|\cdot\|_\A}$ be a unital C*-algebra equipped with faithful tracial state $\tau$ such that $\A_n$ is a unital C*-subalgebra of $\A$ for all $n \in \N$ and $\A_0=\C1_\A \subseteq \A_1 \subseteq \A_2 \subseteq \cdots $.  In particular, $\A$ is AF. Denote $\mathcal{I}=(\A_n)_{n \in \N}$.

If $(\beta(n))_{n \in \N}$ be a sequence of positive real numbers that converge to $0$, then using notation from Theorem \ref{t:ah-c*-cqms}, we have
\begin{equation}\label{eq:fd-approx}
\qpropinquity{}\left( \left(C(X) \otimes \A, \Lip^{\A, \beta, \otimes}_{\mathsf{d}_X, \mathcal{I}, \tau}\right), (C(Y), l^\C_{\mathsf{d}_Y})\right) \leq \beta(0)+\mathrm{GH}(X,Y),
\end{equation}
where $\mathrm{GH}$ is the Gromov-Hausdorff distance between compact metric spaces \cite{burago01}, and furthermore, we have 
\begin{align*}
\qpropinquity{}\left( \left(C(X) \otimes \A, \Lip^{\A, \beta, \otimes}_{\mathsf{d}_X, \mathcal{I}, \tau}\right), \left(C(Y) \otimes \A, \Lip^{\A, \beta, \otimes}_{\mathsf{d}_Y, \mathcal{I}, \tau}\right)\right) \leq 2\beta(0)+\mathrm{GH}(X,Y).
\end{align*}

Moreover, if we let $\varepsilon>0$ and choose $\beta(0)< \varepsilon/2$, then there exists a finite $X_\varepsilon \subseteq X$ such that 
\begin{align*}
\qpropinquity{}\left( \left(C(X) \otimes \A, \Lip^{\A, \beta, \otimes}_{\mathsf{d}_X, \mathcal{I}, \tau}\right), (C(X_\varepsilon), l^\C_{\mathsf{d}_X})\right) < \varepsilon,
\end{align*}
where $\dim(C(X_\varepsilon))< \infty$.

The above results all hold with $ \left(C(X) \otimes \A, \Lip^{\A, \beta, \otimes}_{\mathsf{d}_X, \mathcal{I}, \tau}\right)$ replaced with  $\left(C(X, \A), \Lip^{\A, \beta}_{\mathsf{d}_X, \mathcal{I}, \tau}\right).$
\end{theorem}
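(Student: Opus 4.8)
The plan is to reduce everything to two ingredients already available: the finite-dimensional approximation machinery for $C(X,\B)$ with $\dim(\B)<\infty$ from Theorem~\ref{t:h-cqms} together with the bridge estimates underlying \cite[Theorem 4.4]{Aguilar-Bice17}, and the metric limit result of Theorem~\ref{t:ah-c*-cqms}(3) that lets us replace $\A$ by its finite-dimensional building blocks $\A_n$. First I would prove the first displayed inequality \eqref{eq:fd-approx}. Using the full quantum isometry $\pi_{X,\A}$, it suffices to bound $\qpropinquity{}\!\left(\left(C(X,\A),\Lip^{\A,\beta}_{\mathsf{d}_X,\mathcal{I},\tau}\right),(C(Y),l^\C_{\mathsf{d}_Y})\right)$. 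By Theorem~\ref{t:ah-c*-cqms}(3) applied with $n=0$, the space $\left(C(X,\A_0),\Lip^{\A,\beta}_{\mathsf{d}_X,\mathcal{I},\tau}\right)$ is within $\beta(0)$ of $\left(C(X,\A),\Lip^{\A,\beta}_{\mathsf{d}_X,\mathcal{I},\tau}\right)$ in $\qpropinquity{}$; and $\A_0=\C1_\A$, so by Theorem~\ref{t:ah-c*-cqms}(4) this space is fully quantum isometric to $(C(X),l^\C_{\mathsf{d}_X})$. Hence by the triangle inequality it remains to show $\qpropinquity{}\!\left((C(X),l^\C_{\mathsf{d}_X}),(C(Y),l^\C_{\mathsf{d}_Y})\right)\leq \mathrm{GH}(X,Y)$, which is the classical (scalar) case of \cite[Theorem 4.4]{Aguilar-Bice17} (i.e.\ $\A=\C$, $m_\A=1$, $K=1$), or equivalently Theorem~\ref{t:h-cqms}'s estimate with $\A=\C$ after noting that the quotient-norm term in Definition~\ref{d:h-cqms} only decreases the Lip-norm and so does not affect the bridge-length bound. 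Combining the three estimates yields \eqref{eq:fd-approx}.

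For the second inequality I would simply route through the scalar space: by the triangle inequality in $\qpropinquity{}$,
\[
\qpropinquity{}\!\left(\left(C(X)\otimes\A,\Lip^{\A,\beta,\otimes}_{\mathsf{d}_X,\mathcal{I},\tau}\right),\left(C(Y)\otimes\A,\Lip^{\A,\beta,\otimes}_{\mathsf{d}_Y,\mathcal{I},\tau}\right)\right)
\]
is at most the sum of $\qpropinquity{}$ from $C(X)\otimes\A$ to $(C(Y),l^\C_{\mathsf{d}_Y})$, which is $\leq\beta(0)+\mathrm{GH}(X,Y)$ by \eqref{eq:fd-approx}, and $\qpropinquity{}$ from $(C(Y),l^\C_{\mathsf{d}_Y})$ to $C(Y)\otimes\A$, which is $\leq\beta(0)$ again by Theorem~\ref{t:ah-c*-cqms}(3),(4). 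This gives $2\beta(0)+\mathrm{GH}(X,Y)$. For the finite-dimensional approximation statement, given $\varepsilon>0$ choose $\beta(0)<\varepsilon/2$; by compactness of $(X,\mathsf{d}_X)$ there is a finite $\varepsilon'$-net $X_\varepsilon\subseteq X$ with $\mathrm{GH}(X,X_\varepsilon)\leq \diam{X_\varepsilon}{\mathsf{d}_X}$-controlled and in fact $\mathrm{GH}((X,\mathsf{d}_X),(X_\varepsilon,\mathsf{d}_X))<\varepsilon/2$ (the standard Gromov-Hausdorff net argument, as in \cite[Corollary 4.5]{Aguilar-Bice17}). Then $\eqref{eq:fd-approx}$ with $Y=X_\varepsilon$ gives $\qpropinquity{}<\beta(0)+\varepsilon/2<\varepsilon$, and $\dim(C(X_\varepsilon))=\#X_\varepsilon<\infty$. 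The final sentence, replacing $C(X)\otimes\A$ by $C(X,\A)$, is immediate because $\pi_{X,\A}$ is a full quantum isometry carrying $\Lip^{\A,\beta,\otimes}_{\mathsf{d}_X,\mathcal{I},\tau}$ to $\Lip^{\A,\beta}_{\mathsf{d}_X,\mathcal{I},\tau}$, and full quantum isometry means $\qpropinquity{}$-distance zero by Theorem~\ref{t:distq}.

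The main obstacle, modest as it is, is the scalar base estimate $\qpropinquity{}\!\left((C(X),l^\C_{\mathsf{d}_X}),(C(Y),l^\C_{\mathsf{d}_Y})\right)\leq\mathrm{GH}(X,Y)$: one must check that the particular Lip-norm $l^\C_{\mathsf{d}_X}$ used here (Lipschitz seminorm only, \emph{without} the quotient-norm correction term of the homogeneous construction in \cite{Aguilar-Bice17}) still supports a bridge of length $\leq\mathrm{GH}(X,Y)+\varepsilon$ for every $\varepsilon>0$. I expect this to follow verbatim from the proof of \cite[Theorem 4.4]{Aguilar-Bice17} restricted to $\A=\C$: the quotient-norm term there only enlarges the Lip-norm, hence shrinks the unit ball, hence can only decrease the reach of the constructed bridge, while the height is governed entirely by the Lipschitz part via McShane extension; so dropping that term leaves the bound intact. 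Once this is granted, the entire theorem is an assembly of triangle inequalities and the already-proven parts of Theorem~\ref{t:ah-c*-cqms}.
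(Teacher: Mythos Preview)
Your approach matches the paper's: reduce via Theorem~\ref{t:ah-c*-cqms}(3) at $n=0$ to $C(X,\A_0)\cong C(X)$, invoke the scalar Gromov--Hausdorff bound, and assemble everything with the triangle inequality; the second estimate routed through $C(Y)$ and the finite-net argument are handled the same way.

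The only substantive difference is your treatment of the scalar base estimate $\qpropinquity{}\bigl((C(X),l^\C_{\mathsf{d}_X}),(C(Y),l^\C_{\mathsf{d}_Y})\bigr)\leq\mathrm{GH}(X,Y)$. You flag this as the main obstacle and propose to recover it from \cite[Theorem~4.4]{Aguilar-Bice17} by a monotonicity argument on the quotient-norm correction. That heuristic is not sound as stated: dropping the quotient-norm term \emph{enlarges} the Lip-balls entering the reach (Definition~\ref{d:reach}), and the Hausdorff distance between enlarged sets need not decrease; similarly the height (Definition~\ref{d:height}) is measured in the Monge--Kantorovich metric, which \emph{grows} when the Lip-norm shrinks. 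The paper sidesteps all of this by citing \cite[Theorem~6.6]{Latremoliere13}, which furnishes the scalar bound directly for the bare Lipschitz seminorm $l^\C_{\mathsf{d}_X}$. With that citation in hand your obstacle evaporates and the two proofs coincide.

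A minor citation issue: Theorem~\ref{t:ah-c*-cqms}(4) concerns the global hypothesis $\A=\C$, not the restricted Lip-norm on $C(X,\A_0)$ inside a general $\A$. The identification you actually need follows from Expression~\eqref{eq:af-h} at $n=0$ (all conditional-expectation terms vanish on $C(X,\A_0)$), which is what the paper means by ``the same argument of \cite[Corollary~2.12]{Aguilar-Bice17}.''
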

\begin{proof}
We note that $\left(C(X,\C1_\A) , \Lip^{\A, \beta, \otimes}_{\mathsf{d}_X, \mathcal{I}, \tau}\right)$ is fully quantum isometric to $(C(X), l^\C_{\mathsf{d}_X})$ by the same argument of \cite[Corollary 2.12]{Aguilar-Bice17}.  Thus, by the triangle inequality, Theorem \ref{t:distq}, and Theorem \ref{t:ah-c*-cqms}, we have
\begin{align*}
& \qpropinquity{}\left( \left(C(X) \otimes \A, \Lip^{\A, \beta, \otimes}_{\mathsf{d}_X, \mathcal{I}, \tau}\right), (C(Y), l^\C_{\mathsf{d}_Y})\right)\\
 & \leq \qpropinquity{}\left( \left(C(X) \otimes \A, \Lip^{\A, \beta, \otimes}_{\mathsf{d}_X, \mathcal{I}, \tau}\right), \left(C(X,\C1_\A) , \Lip^{\A, \beta, \otimes}_{\mathsf{d}_X, \mathcal{I}, \tau}\right)\right) \\
 & \quad + \qpropinquity{}\left(\left(C(X,\C1_\A) , \Lip^{\A, \beta, \otimes}_{\mathsf{d}_X, \mathcal{I}, \tau}\right),  (C(Y), l^\C_{\mathsf{d}_Y})\right)\\
 & \leq \beta(0)+ \qpropinquity{}\left((C(X), l^\C_{\mathsf{d}_X}),  (C(Y), l^\C_{\mathsf{d}_Y})\right)\\
 & \leq \beta(0)+\mathrm{GH}(X,Y),
\end{align*} 
where the last inequality is given by \cite[Theorem  6.6]{Latremoliere13}. Similarly, we have
\begin{align*}
& \qpropinquity{}\left( \left(C(X) \otimes \A, \Lip^{\A, \beta, \otimes}_{\mathsf{d}_X, \mathcal{I}, \tau}\right), \left(C(Y) \otimes \A, \Lip^{\A, \beta, \otimes}_{\mathsf{d}_Y, \mathcal{I}, \tau}\right)\right)\\
& \leq\qpropinquity{}\left( \left(C(X) \otimes \A, \Lip^{\A, \beta, \otimes}_{\mathsf{d}_X, \mathcal{I}, \tau}\right), \left(C(X,\C1_\A) , \Lip^{\A, \beta, \otimes}_{\mathsf{d}_X, \mathcal{I}, \tau}\right)\right) \\
 & \quad + \qpropinquity{}\left(\left(C(X,\C1_\A) , \Lip^{\A, \beta, \otimes}_{\mathsf{d}_X, \mathcal{I}, \tau}\right),  \left(C(Y,\C1_\A) , \Lip^{\A, \beta, \otimes}_{\mathsf{d}_Y, \mathcal{I}, \tau}\right)\right)\\
 & \quad + \qpropinquity{}\left( \left(C(Y,\C1_\A) , \Lip^{\A, \beta, \otimes}_{\mathsf{d}_Y, \mathcal{I}, \tau}\right),\left(C(Y) \otimes \A, \Lip^{\A, \beta, \otimes}_{\mathsf{d}_Y, \mathcal{I}, \tau}\right)\right) \\
 & \leq \beta(0)+\mathrm{GH}(X,Y)+\beta(0).
\end{align*}

Now, since $(X, \mathsf{d}_X)$ is a compact metric space, there exists a finite $\varepsilon/2$-net $X_\varepsilon\subseteq X$ of $X$.  Hence $\mathrm{GH}(X,X_\varepsilon)\leq \varepsilon/2,$ since the Hausdorff distance on compact subsets of a compact metric space dominates the Gromov-Hausdorff distance, which shows 
\[\qpropinquity{}\left( \left(C(X) \otimes \A, \Lip^{\A, \beta, \otimes}_{\mathsf{d}_X, \mathcal{I}, \tau}\right), (C(X_\varepsilon), l^\C_{\mathsf{d}_X})\right) < \varepsilon/2+\varepsilon/2\]
by Expression \ref{eq:fd-approx}.  The proof is complete  up to the last sentence of the theorem, which   is provided by the full quantum isometry between the quantum metric spaces from the proof of Theorem \ref{t:ah-c*-cqms}. 
\end{proof}

\bibliographystyle{plain}
\bibliography{thesis-a}

\end{document}